\DeclarePairedDelimiterX{\scal}[2]{\langle}{\rangle}{#1, #2}
\DeclarePairedDelimiterX{\norm}[1]{\lVert}{\rVert}{#1}
\DeclarePairedDelimiterX{\normi}[1]{\lVert}{\rVert_\infty}{#1}
\DeclareMathOperator{\dive}{div}
\DeclareMathOperator{\supp}{supp}
\DeclareMathOperator{\loc}{loc}
\DeclareMathOperator{\tang}{Tan}
\DeclareMathOperator{\reg}{Dens}
\DeclareMathOperator{\sing}{Sing}
\DeclareMathOperator{\leb}{Leb}
\newcommand{\trasl}{L}
\newcommand{\dil}[1]{\tau_{{#1}}}
\newcommand{\me}{\mathscr{M}}
\newcommand{\mep}{\mathscr{M}^+}
\newcommand{\s}{\Sigma}
\renewcommand{\-}{\setminus}
\newcommand{\om}{\Omega}
\newcommand{\e}{\varepsilon}
\newcommand{\nv}{\norm{V}}
\newcommand{\nds}[1]{|\delta^s {#1}|}
\newcommand{\Xt}{\mathfrak{X}_t(\M)}
\newcommand{\Xc}{\mathfrak{X}_c(\M)}
\newcommand{\Xm}{\mathfrak{X}(\M)}
\newcommand{\X}{\mathfrak{X}}
\newcommand{\Xo}{\mathfrak{X}_0(\M)}
\newcommand{\Xp}{\mathfrak{X}_\perp(\M)}
\newcommand{\cc}{\subset \subset}
\newcommand{\ind}{\mathbf{1}}
\newcommand{\interior}[1]{%
  {\kern0pt#1}^{\mathrm{o}}%
}
\newcommand{\clos}[1]{\mkern 1mu\overline{\mkern-1mu#1\mkern-1mu}\mkern 1mu}
\newcommand{\pM}{\partial \M}
\newcommand{\T}{\Theta}
\newcommand{\Tn}{\Theta^k}
\newcommand{\tk}{\Theta^k}
\newcommand{\tkv}[1]{\Theta^k(\nv,{#1})}
\newcommand{\tuu}{\Theta^{* (k-1)}}
\newcommand{\tlu}{\Theta_*^{k-1}}
\newcommand{\tuk}{\Theta^{* k}}
\newcommand{\tlk}{\Theta_*^{k}}
\newcommand{\muv}{\sigma_V}
\newcommand{\mus}{\sigma_V^*}
\newcommand{\muc}{\sigma_C}
\newcommand{\muj}{\sigma_j}
\newcommand{\muvj}{\sigma_{V_j}}
\newcommand{\wto}{\stackrel{\ast}{\rightharpoonup}}
\newcommand*\dif{\mathop{}\!\mathrm{d}}
\DeclareMathOperator*{\wtoi}{\overset{\ast}{\rightharpoonup}}
\DeclareMathOperator*{\utoi}{\overset{unif}{\longrightarrow}}
\DeclareMathOperator*{\diam}{diam}
\DeclareMathOperator*{\hdim}{\mathcal{H}_{dim}}
\newcommand{\R}{\mathbb{R}}
\newcommand{\ru}{\mathbb{R}^{n}}
\newcommand{\N}{\mathbb{N}}
\newcommand{\M}{\mathcal{M}}
\newcommand{\W}{T_0^+ \M}
\newcommand{\V}{\mathcal{V}}
\renewcommand{\T}{\Theta}
\newcommand{\haus}[1]{\mathcal{H}^{#1}}
\numberwithin{equation}{section}
\newtheorem{theorem}{Theorem}[section]
\newtheorem*{theorem*}{Theorem}
\newtheorem{lemma}[theorem]{Lemma}
\newtheorem{corollary}[theorem]{Corollary}
\theoremstyle{definition}
\newtheorem{defi}{Definition}[section]
\newtheorem*{notation}{Notation}
\newtheorem{assumption}{Assumption}
\theoremstyle{remark}
\newtheorem{rem}{Remark}[section]
\newlist{steps}{enumerate}{1}
\setlist[steps, 1]{label = \textbf{Step \arabic*:}}
\newcounter{count}
\title{Rectifiability of the free boundary for varifolds}
\author{Luigi De Masi}
\begin{document}
\begin{abstract}
We establish a partial rectifiability result for the free boundary of a $k$-varifold $V$. Namely, we first refine a theorem of Gr\"uter and Jost by showing that the first variation of a general varifold with free boundary is a Radon measure.
Next we show that if the mean curvature $H$ of $V$ is in $L^p$ for some $p \in [1,k]$, then the set of points where the $k$-density of $V$ does not exist or is infinite has Hausdorff dimension at most $k-p$.
We use this result to prove, under suitable assumptions, that the part of the first variation of $V$ with positive and finite $(k-1)$-density is $(k-1)$-rectifiable.
\end{abstract}
\date{}
\keywords{Varifolds, free boundary, rectifiability, density set}
\subjclass[2020]{49Q15 (Primary), 53A07 (Secondary)}
\maketitle

\section{Introduction}\label{sec:introduction}
\subsection{Motivations}
The main goal of this paper is to study the rectifiability of the free boundary for a $k$-varifold $V$ in a compact domain $\M \subset \R^n$ with smooth boundary $\pM$.

We say that $V$ has free boundary at $\pM$ if the following first variation formula holds for every vector field $X$ that is tangent to $\pM$ (see next section for more detailed definitions):
\begin{equation}\label{eq:fvf_free_boundary_intro}
\int_{G_k(\M)} \dive_S X(x) \dif V(x,S)
=
- \int_\M \scal{X}{H} \dif \nv,
\end{equation}
where $H \in L^1(\M,\nv)$.

If $V$ is the varifold induced by a smooth $k$-surface $\Sigma$ with smooth boundary $\partial \s$, $H$ is the mean curvature vector of $\s$ and \eqref{eq:fvf_free_boundary_intro} implies that
$\partial \Sigma \subset \pM$ and that $\Sigma$ meets $\pM$ orthogonally: that is the unit conormal $\eta$ to $\partial \Sigma$ coincides with the exterior unit normal vector $N$ to $\pM$. So, varifolds with free boundary generalize in a weak sense the idea of surfaces that meet $\pM$ orthogonally.

If $\s \subset \M$ is a smooth $k$-surface with smooth boundary $\partial \s$ and meets $\pM$ orthogonally, we can test the first variation formula
\begin{equation}\label{eq:fvf_smooth_intro}
\int_{\Sigma} \dive_{T_x \Sigma} X(x) \dif \haus{k}(x)
=
- \int_\Sigma \scal{X}{H} \dif \haus{k}
+ \int_{\partial \Sigma} \scal{X}{N} \dif \haus{k-1}
\end{equation}
(where $\haus{s}$ is the $s$-dimensional Hausdorff measure) with a smooth vector field $X$ such that $X(x)=N(x)$ on $\pM$, obtaining the estimate
\begin{equation}\label{eq:bound_smooth_boundary}
\haus{k-1}(\partial \s)
\leq
c \left( \frac{\haus{k}(\s)}{R(\M)} + \int_\s |H| \dif \haus{k} \right)
\end{equation}
where $c=c(k,\M)$ and $R(\M)$ is the minimum radius of curvature of $\pM$. This bound can be easily localized to any ball $B_r(x)$ where $x \in \pM$. (In particular, if $ \M = B_1$ is the unit ball with center 0 and $\s \subset B_1$ is a minimal $k$-surface that meets $\partial B_1$ orthogonally, choosing $X=x$ we obtain  the nice identity
$
\haus{k-1}(\partial \s)
=
k \haus{k}(\s).
$
)

The simple proofs of these a-priori bounds strongly rely on the fact that $\s$ and $\partial \s$ are assumed to be smooth (that is the first variation of $\s$ is assumed to be bounded) and on the assumption that the conormal $\eta$ of $\partial \s$ points outside $\M$. It is natural to ask if similar estimates hold also for a general varifold with free boundary $V$: that is if a varifold $V$ with free boundary at $\pM$ has bounded first variation and if its unit conormal on $\pM$ is orthogonal to $\pM$ and points outside $\M$.

We answer these questions refining a result stated by Gr\"uter and Jost in \cite{Gruter:allardtype} and by Edelen in \cite{Edelen:freeboundary}: we prove that if $V$ satisfies \eqref{eq:fvf_free_boundary_intro}, then it has bounded first variation: namely there exists a positive Radon measure $\muv$ on $\pM$  and a $\nv$-measurable vector field $\tilde{H}$ on $\pM$ such that, for every smooth vector field $X$ on $\M$ we have
\begin{equation}\label{eq:fvf_bounded_intro}
\int_{G_k(\M)} \dive_S X(x) \dif V(x,S)
=
- \int_\M \scal{X}{H + \tilde{H}} \dif \nv
+ \int_{\pM} \scal{X}{N} \dif \muv,
\end{equation}
where $N$ is the exterior unit normal vector to $\pM$ and $\tilde{H} \in L^\infty(\pM, \nv)$ is orthogonal to $\pM$. Moreover we prove bounds on $\muv$ similar to \eqref{eq:bound_smooth_boundary}. The measure $\tilde{H}\nv + N \muv$ is the orthogonal part of the first variation of $V$: $\tilde{H}\nv$ takes into account the absolutely continuous part (with respect to $\nv$), that is the areas where $V$ ``lean" on $\pM$ tangentially, while $\muv$ takes into account the boundary part of the first variation, that is where $V$ meets $\pM$ transversally.

Indeed, \eqref{eq:fvf_bounded_intro} is clearly analogous to \eqref{eq:fvf_smooth_intro}: by comparison we have that if $V$ is induced by a smooth surface $\s$ with smooth boundary $\partial \s$, then $\muv = \haus{k-1}\llcorner \partial \Sigma$.
It is then natural to ask also for a general $k$-varifold with free boundary $V$, if $\muv$ is singular with respect to $\nv$ or, more precisely, if $\muv$ is $(k-1)$-rectifiable. As far as we know, this question has not been investigated. Under suitable assumptions, we are able to show a rectifiability result for $\muv$.

To prove it, we analyze tangent cones to $V$ at points on $\pM$ to get informations about the tangent measures of $\muv$; tangent varifolds to $V$ exist if the upper $k$-density of $V$ is finite. When the mean curvature $H$ of $V$ is in $L^p(\M,\nv)$ for some $p>k$, it is well-known that the density of $V$ exists and is finite for every point; whereas if $p \leq k$, the finiteness of the upper density is guaranteed just $\nv$-a.e.\
by monotonicity formulae and differentiation theorems, which is not enough to prove any rectifiability result on $\muv$ (since $\pM$ may have $\nv$-measure $0$).

In order to deal with this case, we prove an estimate of the size of the set where the $k$-density of the varifold does not exists or is infinite, in terms of Hausdorff measures: if $H \in L^p(\M,\nv)$ for some $p \in [1,k]$, then this set has Hausdorff dimension at most $k-p$.

\subsection{Background and main results}
Allard studied the first variation of a varifold in the two seminal papers \cite{Allard1} and \cite{Allard2}. In the former he considered the interior case, while in the latter he studied the behavior of a $k$-varifold $V$ assuming it has, as a boundary, a smooth $(k-1)$-dimensional submanifold $\Gamma$, i.e. he assumes that $V$ has generalized mean curvature with respect to vector fields that vanish on $\Gamma$. An extension of the boundary result of Allard can be found in \cite{Bourni}.

An $\e$-regularity theorem similar to the ones by Allard is proved by Gr\"uter and Jost in \cite{Gruter:allardtype} for varifolds with free boundaries.
Moreover they prove in \cite[4.11(ii)]{Gruter:allardtype} that a varifold with free boundary with $\nv(\pM)=0$ has bounded first variation $\delta V$;
this is also proved by Edelen in \cite[Proposition 3.2]{Edelen:freeboundary} removing the hypothesis that $\nv(\pM)=0$, but assuming that $V$ is rectifiable.

We refine these boundedness results, extending them to general varifolds and removing the assumption \mbox{$\nv(\pM)=0$}.
We state it in a slightly more general setting: if $V$ has generalized mean curvature with respect to vector fields that vanish on $\pM$,
then it has bounded first variation with respect to vector fields that are orthogonal to $\pM$ (see next section for precise definitions):
\begin{theorem}\label{thm:positivity_singular_first_variation}
Let $V \in \V_k(\M)$ be a $k$-varifold with generalized mean curvature $H$ with respect to $\Xo$, with $H \in L^1(\M,\nv)$. Then there exists a positive Radon measure $\muv$ on $\pM$ and a $\nv$-measurable vector field $\tilde{H}$ on $\pM$ such that, for any $X \in \Xp$, it holds
\begin{equation}\label{eq:normal_first_variation}
\begin{split}
\int_{G_k(\M)} \dive_S X(x) \dif V(x,S)
=
- \int_\M \scal{X}{H + \tilde{H}} \dif \nv
+ \int_{\pM} \scal{X}{N} \dif \muv
\end{split}
\end{equation}
where
$\tilde{H}$ is orthogonal to $\pM$ for $\nv$-a.e.\ $x \in\pM$, $\tilde{H} \in L^\infty(\pM, \nv)$ and $\normi{\tilde{H}}$ depends only on the second fundamental form of $\pM$.
In particular, $V$ has bounded first variation with respect to $\Xp$. Moreover, the following global and local estimates hold:
\begin{equation}\label{eq:estimate_muv_global}
\muv (\pM)
\leq
c \nv(\M) + \int_\M |H| \dif \nv;
\end{equation}
\begin{equation}\label{eq:estimate_measure_boundary}
\muv \big(B_{r/2}(x_0)\big)
\leq
\frac{c}{r} \nv \big( B_r(x_0) \big)
+
\int_{B_r(x_0)} |H| \dif \nv
\qquad
\forall x_0 \in \pM,
\,
\forall r \leq R(\M),
\end{equation}
where the constant $c=c(\M)$ depends only on the second fundamental form of $\pM$ and $R(\M)$ is such that the signed distance function from $\pM$ is of class $C^2$ in $U_R(\pM)$.
\end{theorem}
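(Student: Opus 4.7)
The plan is to use the signed distance function $d$ to $\pM$ to construct a family of vector fields that vanish on $\pM$ (hence lie in $\Xo$), apply the hypothesis, and extract both $\muv$ and $\tilde H$ by passing to a limit. Extend the outward normal to the tubular neighborhood $U_R = U_R(\pM)$ by $N = \nabla d$; on $U_R$ the Hessian $\nabla^2 d$ is bounded by a constant depending only on the second fundamental form of $\pM$.

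For a nonnegative $\phi \in C^\infty_c(\overline{\M})$, pick a smooth nondecreasing $\eta_\epsilon : [0,\infty) \to [0,1]$ with $\eta_\epsilon(0) = 0$ and $\eta_\epsilon(s) = 1$ for $s \geq \epsilon$, and set $X^\epsilon_\phi := \eta_\epsilon(d)\phi N \in \Xo$. With $P_S$ the orthogonal projection onto the $k$-plane $S$, a direct calculation gives
\[
\dive_S X^\epsilon_\phi = \eta_\epsilon(d)\bigl[\phi\, \dive_S N + \scal{\nabla \phi}{P_S N}\bigr] + \eta'_\epsilon(d)\, \phi\, |P_S N|^2,
\]
where $\dive_S N = \mathrm{tr}(P_S \nabla^2 d)$ is uniformly bounded on $U_R$. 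Applying the hypothesis to $X^\epsilon_\phi$ and isolating the nonnegative term yields
\[
0 \leq \int \eta'_\epsilon(d)\, \phi\, |P_S N|^2 \dif V = -\int \eta_\epsilon(d)\, \phi\, \scal{N}{H} \dif \nv - \int \eta_\epsilon(d)\bigl[\phi\, \dive_S N + \scal{\nabla\phi}{P_S N}\bigr] \dif V,
\]
whose right-hand side converges as $\epsilon \to 0^+$ to a value bounded linearly in $\norm{\phi}_\infty + \norm{\nabla \phi}_\infty$, $\nv(\supp \phi)$, and $\int_\M |H| \dif \nv$. The measures $\eta'_\epsilon(d)|P_S N|^2 \dif V$ are nonnegative, uniformly bounded, and concentrate in a shrinking neighborhood of $\pM$; the estimate $|\phi(x)| \leq \norm{\nabla\phi}_\infty d(x)$ for $\phi$ vanishing on $\pM$ shows that the limit depends only on $\phi|_\pM$. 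This produces a positive Radon measure $\muv$ on $\pM$ with $\muv(\phi) = \lim_{\epsilon \to 0^+} \int \eta'_\epsilon(d)\, \phi\, |P_S N|^2 \dif V$.

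For a general $X \in \Xp$, decompose $X = \phi N + Y$ near $\pM$ with $\phi := \scal{X}{N}$ extended smoothly to $\M$, so that $Y := X - \phi N$ vanishes on $\pM$ and, after a cutoff, lies in $\Xo$. The contribution $\delta V(Y) = -\int \scal{Y}{H} \dif \nv$ is immediate. Passing $\epsilon \to 0$ in the identity for $X^\epsilon_\phi$ identifies $\delta V(\phi N)$ as $\int_\pM \phi \dif \muv$ plus the bulk term $-\int \phi \scal{N}{H} \dif \nv$ plus a residual $-\int_{\{d=0\}} \phi\, \dive_S N \dif V(x,S)$ concentrated where $P_S N = 0$, i.e.\ where $V$ has tangent planes parallel to $T\pM$. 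This residual is $\nv$-absolutely continuous, and its Radon--Nikodym density is normal to $\pM$ (since $\nabla^2 d$ annihilates $N$ on $\pM$) and bounded by the norm of the second fundamental form of $\pM$; this defines $\tilde H \in L^\infty(\pM, \nv)$ and gives \eqref{eq:normal_first_variation}. The estimates \eqref{eq:estimate_muv_global}--\eqref{eq:estimate_measure_boundary} follow by testing with $\phi \equiv 1$ on $\M$, respectively with a cutoff equal to $1$ on $B_{r/2}(x_0)$, supported in $B_r(x_0)$, and satisfying $|\nabla\phi| \leq C/r$.

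The main obstacle is the clean separation of the normal part of $\delta V$ on $\pM$ into a singular boundary contribution (giving $\muv$) and an $\nv$-absolutely continuous ``tangential-leaning'' contribution (giving $\tilde H$). Concretely, one must carefully analyze the $\epsilon \to 0$ behavior of $\eta'_\epsilon(d)|P_S N|^2 \dif V$ on the portion of $V$ that has mass on $\pM$: tangent planes transverse to $\pM$ contribute the transverse boundary measure $\muv$, while tangent planes parallel to $T\pM$ kill the $|P_S N|^2$ factor and produce only the bounded residual that defines $\tilde H$.
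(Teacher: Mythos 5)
Your strategy is essentially the paper's: cut off near $\pM$ using the signed distance, apply the $\Xo$-hypothesis to the part that vanishes on $\pM$, and identify the concentrating term $\int \eta_\e'(d)\,\phi\,|P_S N|^2 \dif V$ in the limit as a positive Radon measure $\muv$ on $\pM$, with the residual supported on $G_k(\pM)$ packaged into $\tilde H$. The construction of $\muv$ (positivity, dependence only on $\phi|_{\pM}$ via $|\phi|\leq \normi{\nabla\phi}\,d$, Riesz) and the derivation of \eqref{eq:estimate_muv_global}--\eqref{eq:estimate_measure_boundary} are sound. The genuine gap is in the residual. Passing to the limit in your identity, the boundary contribution to $\delta V(\phi N)$ is $\muv(\phi)$ (up to the sign convention for $d$) \emph{together with}
\begin{equation*}
\int_{G_k(\pM)}\Big[\phi\,\dive_S N+\scal{\nabla\phi}{P_S N}\Big]\dif V(x,S),
\end{equation*}
whereas you record only $-\int_{\{d=0\}}\phi\,\dive_S N \dif V$, silently discarding $\int_{G_k(\pM)}\scal{\nabla\phi}{P_S N}\dif V$. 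That term involves the normal derivative of $\phi=\scal{X}{N}$ on $\pM$, which is not determined by $X|_{\pM}$; unless it vanishes, the boundary part of $\delta V$ cannot be written in the form $-\int\scal{X}{\tilde H}\dif\nv+\int\scal{X}{N}\dif\muv$ at all. It vanishes precisely when $V\llcorner G_k(\pM)$ charges only planes $S\subset T_x\pM$ (i.e.\ $P_S N=0$ for $V$-a.e.\ $(x,S)\in G_k(\pM)$), and this same unproved fact is what you invoke when asserting that ``tangent planes parallel to $T\pM$ kill the $|P_SN|^2$ factor'' and that the residual is $\nv$-absolutely continuous with a normal, bounded density. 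This statement is exactly the paper's Constancy Lemma (Lemma \ref{lmm:constancy}), which is the key ingredient allowing $\nv(\pM)>0$ without rectifiability; the paper proves it by testing with $X=d\,\gamma(|x|/r)\gamma(d/\rho)\nabla d$ and letting $\rho\to 0$.

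Note that your own setup nearly yields this lemma: the left-hand side $\int\eta_\e'(d)\,\phi\,|P_S N|^2\dif V$ is bounded uniformly in $\e$ (it equals a convergent right-hand side) and dominates $\eta_\e'(0)\int_{G_k(\pM)}\phi\,|P_S N|^2\dif V$ since all factors are nonnegative; choosing $\eta_\e$ with $\eta_\e'(0)\gtrsim \e^{-1}$ then forces $\int_{G_k(\pM)}\phi\,|P_SN|^2\dif V=0$. But this step must be carried out explicitly --- as written, the dichotomy between transverse and tangential planes on $\pM$ is asserted rather than proved, and it is the heart of the theorem. A minor further point: the normality of $\tilde H$ follows simply because it is a scalar multiple of $N$; the observation that $\nabla^2 d$ annihilates $\nabla d$ is not what is needed there.
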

In other words, this theorem states that the component of the first variation of $V$ on $\pM$ that is orthogonal to $\pM$ is the sum of two terms:
\begin{itemize}
\item An absolutely continuous part with respect to $\nv$ given by $\tilde{H} \nv$, which takes into account the fact that $V$ can ``lean" on $\pM$: indeed,
if $V$ is induced by a smooth surface $\s$ with constant multiplicity, then $\tilde{H}$ is the mean curvature of $\s$ where $\s$ lean on $\pM$. This is orthogonal to $\pM$ and depends on $T_x \s$ and of the second fundamental form of $\pM$, see \eqref{eq:def_Htilde}.
\item A part given by $N \muv$, which ``points outward $\M$" and is bounded.
Roughly speaking, we expect that this is the ``transversal boundary" of $V$ at $\pM$. On the other hand, $V$ can have unbounded first variation on $\pM$ only where ``$V$ meets $\pM$ tangentially".
\end{itemize}
Looking at the case of varifolds with free boundary, since the tangent part to $\pM$ of the first variation of such a varifold is controlled by definition, Theorem \ref{thm:positivity_singular_first_variation} easily implies that varifolds with free boundary have bounded first variation (Corollary \ref{cor:bv_free_boundary_varifolds}).

As we said before, Corollary \ref{cor:bv_free_boundary_varifolds} was already proved by Gr\"uter and Jost when $\nv(\pM)=0$, and Edelen extended the result to $\nv(\pM)>0$ but assuming that $V$ is rectifiable. In Edelen's proof, the rectifiability is used to show that for $\nv$-a.e.\ $x \in \pM$ the only planes charged by $V$ are those included in $T_x \pM$.
We are able to remove the rectifiability assumption by the use of Lemma \ref{lmm:constancy}, which is a form of the Constancy Theorem (see \cite[Theorem 41.1]{simon:lectures}) and asserts that on $\pM$, even in the non-rectifiable case, $V$ charges only planes that are included in $T_x \pM$.

We have already stated above that, if the varifold is induced by a smooth surface with free boundary at $\pM$, then $\muv = \haus{k-1} \llcorner \partial \Sigma$.
It is then natural to ask if $\muv$ is $(k-1)$-rectifiable in a more general case as well.

In order to state precisely our main result concerning $\muv$, we define the \emph{$(k-1)$-dimensional part} of $\muv$, written $\mus$, as the restriction of $\muv$ to those points with strictly positive lower $(k-1)$-density and finite upper $(k-1)$-density :
\begin{equation}\label{eq:definition_mus}
\mus
=
{\muv}
\llcorner
E,
\qquad
E=
\{
x \mid
0< \tlu(\muv,x) \leq \tuu(\muv,x) < +\infty
\}.
\end{equation}
Our main result on $\mus$ is the following:
\begin{theorem}\label{thm:rectifiability_free_boundary}
Let $V \in \V_k(\M)$ be a rectifiable $k$-varifold with free boundary at $\pM$ such that $H \in L^p(\M,\nv)$ for some $p>1$ and $\tkv{x} \geq 1$ for $\nv$-a.e.\ $x \in \M$. Then $\mus$ is $(k-1)$-rectifiable.
\end{theorem}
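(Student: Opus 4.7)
The plan is to apply the standard tangent-measure criterion for rectifiability to $\mus$: it suffices to show that at $\mus$-almost every $x_0\in\pM$, every tangent measure of $\mus$ at $x_0$ is a positive multiple of $\haus{k-1}$ restricted to some affine $(k-1)$-plane, and then invoke a Marstrand--Mattila--Preiss type criterion in the spirit of \cite{DePDeRGhi16}.

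First I would localize to a full $\mus$-measure set of good points. By the abstract's Hausdorff-dimension estimate, since $p>1$ the set
\[
B=\{x\in\M : \tkv{x}\ \text{does not exist or equals}\ +\infty\}
\]
has Hausdorff dimension at most $k-p<k-1$. Because on the support of $\mus$ one has $\tuu(\muv,\cdot)<+\infty$ by definition of $\mus$, the measure $\mus$ is absolutely continuous with respect to $\haus{k-1}$, so $\mus(B)=0$. I may thus work at a fixed $x_0\in\pM\setminus B$ with $0<\tlu(\muv,x_0)\le\tuu(\muv,x_0)<+\infty$ and $\tkv{x_0}<+\infty$.

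Next, I would blow up $V$ at $x_0$ by rescaling with factor $1/r$ and letting $r\to 0$. The local estimate \eqref{eq:estimate_measure_boundary} together with the $L^p$-control on $H$ (with $p>1$) yields equibounded first variations for the rescaled varifolds on compact subsets of the half-space $T_{x_0}^+\M$. Allard compactness gives subsequential convergence to a tangent varifold $C$ supported in $T_{x_0}^+\M$. Since $p>1$, the rescaled mean-curvature contributions vanish in the limit, while the tangential free-boundary condition is preserved under blow-up. Applying Theorem \ref{thm:positivity_singular_first_variation} to $C$ yields $\delta C = N\,\muc$ on $\Pi:=T_{x_0}\pM$ for some Radon measure $\muc$; monotonicity forces $C$ to be a cone, and the rescaled boundary measures converge weakly-$\ast$ to $\muc$, identifying $\muc$ with a tangent measure of $\mus$ at $x_0$.

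The core of the argument, and the step I expect to be the main obstacle, is to prove that any such $\muc$ is concentrated on a single affine $(k-1)$-plane $L\subset\Pi$. The idea is to combine the cone structure with Lemma \ref{lmm:constancy}: since $C$ has free boundary at $\Pi$, the constancy-type lemma ensures that on $\Pi$ the varifold $C$ charges only planes contained in $\Pi$, which is the crucial piece of rigidity that replaces a direct constancy argument (available only in the rectifiable interior case). Coupling this with the $0$-homogeneity of $C$, the $(k-1)$-homogeneity of $\muc$, and the two-sided density pinching $0<\tlu(\muc,\cdot)\le\tuu(\muc,\cdot)<+\infty$ inherited from $\mus$, one iterates a second blow-up on $\muc$ at a generic point of positive lower and finite upper $(k-1)$-density and extracts a full $(k-1)$-dimensional group of translation invariances; this forces $\muc=\theta\,\haus{k-1}\llcorner L$ for some $\theta>0$ and some $(k-1)$-plane $L\subset\Pi$. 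Once this flat-tangent-measure property holds at $\mus$-a.e.\ point, the Mattila--Preiss criterion (or the variant in \cite{DePDeRGhi16}) gives the $(k-1)$-rectifiability of $\mus$. The delicacy lies entirely in this last cone analysis: without rectifiability of $V$ on $\pM$ at our disposal, Lemma \ref{lmm:constancy} is the only tool forcing the tangent cone to split off the desired $(k-1)$-plane.
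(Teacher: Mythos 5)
There is a genuine gap, and it sits exactly where you predict the ``main obstacle'' to be: the proof that every tangent measure $\muc$ is concentrated on a single $(k-1)$-plane. Your proposed mechanism --- Lemma \ref{lmm:constancy} plus $0$-homogeneity of $C$ plus an iterated blow-up extracting a $(k-1)$-dimensional group of translations --- does not close this step. Lemma \ref{lmm:constancy} only tells you that on the flat boundary $\Pi$ the cone $C$ charges $k$-planes contained in $\Pi$; it gives no control on \emph{where} in $\Pi$ the measure $\muc$ lives, and it is in fact not used at this stage of the paper's argument (it is used earlier, to construct $\muv$ in Theorem \ref{thm:positivity_singular_first_variation}). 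Moreover, the homogeneity of $\muc$ together with translation invariance along \emph{some} subspace $D$ does not by itself determine $\dim(\supp\muc)$ unless you first know $\supp\muc\subseteq D$; and the two-sided density pinching you invoke for $\muc$ at generic points is not ``inherited from $\mus$'': the hypotheses $0<\tlu(\muv,x_0)\le\tuu(\muv,x_0)<\infty$ concern the base point $x_0$ only, and lower densities do not pass to weak-$*$ limits, so the lower bound $\tlu(\muc,y)>0$ at $\muc$-a.e.\ $y\neq 0$ is unjustified.

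The missing idea is the following concentration statement, which is the heart of the paper's proof (Lemmas \ref{lmm:max_density_invariance} and \ref{lmm:approx_continuity_contant_density}): the set $D_C$ of points where $\T^k(\norm{C},\cdot)$ attains its maximal value $\T^k(\norm{C},0)$ is a linear subspace along which $C$ (hence $\muc$) is translation invariant, and for $\muv$-a.e.\ $x_0$ one has $\supp\muc\subseteq D_C$. The inclusion $\supp\muc\subseteq D_C$ is \emph{not} a soft consequence of the cone structure; it is proved by showing that $\T^k(\nv,\cdot)$ is approximately continuous with respect to $\muv$ at $\muv$-a.e.\ point, applying Egoroff to the almost-monotonicity functions $\varphi_x$ of Theorem \ref{thm:singular_set_V_lower_dimensional} to get uniform control, and deriving a contradiction from $\muj(B_r(y))\ge\beta>0$ if some $y\in\supp\muc$ had strictly smaller density. (This is also why one must work on the density set $\reg(V)$, where the quantitative estimate \eqref{eq:monotonicity_phi} holds, rather than merely on the set where $\T^k(\nv,\cdot)$ exists, as in your first step.) Only once $\supp\muc\subseteq D_C$ is established does the counting argument with dyadic cubes --- using that $\muc$ is nontrivial, locally finite, scaling invariant with exponent $k-1$, and translation invariant along $D_C$ --- force $\dim D_C=k-1$ and $\muc=\alpha\haus{k-1}\llcorner D_C$. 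The remainder of your outline (blow-up, compactness, convergence $\muj\wto\muc$, identification of $\tang^{k-1}(\mus,x_0)$ with $\{\muc\}$, and the final appeal to the Marstrand--Mattila criterion) matches the paper.
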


To prove Theorem \ref{thm:rectifiability_free_boundary}, we make an analysis of the blow-ups of $V$ on $\pM$ similar to the one performed in \cite{DePDeRGhi16}. The study of blow-ups of $V$ allows us to deduce, for $\mus$-a.e.\ $x \in \pM$, that every $(k-1)$-blow-up of $\mus$ at $x$ is of the form $\beta \haus{k-1} \llcorner S$ for some $(k-1)$-dimensional plane $S$ and $\beta>0$. The Marstrand-Mattila Rectifiability Criterion (Theorem \ref{thm:M-M_criterion}), then implies that $\mus$ is $(k-1)$-rectifiable.

As we show at the beginning of section \ref{sec:proof_rectifiability}, if $H \in L^p(\M, \nv)$ for some $p>k$, then the condition $\tuu(\muv,x) < +\infty$ is not restrictive, since it holds for every point $x$, basically by \eqref{eq:estimate_measure_boundary} and by the monotonicity formula for $\nv$ that we prove in Corollary \ref{cor:monotonicity_formula}.
 Thus in this case, Theorem \ref{thm:rectifiability_free_boundary} reads as follows.
\begin{theorem}\label{thm:rectif_fb_p>k}
Let $V \in \V_k(\M)$ be a rectifiable $k$-varifold with free boundary at $\pM$ such that $H \in L^p(\M,\nv)$ for some $p>k$ and $\tkv{x} \geq 1$ for $\nv$-a.e.\ $x \in \M$. Then the restriction $\muv \llcorner \{x \mid \tlu(\muv,x)>0 \}$ is $(k-1)$-rectifiable.
\end{theorem}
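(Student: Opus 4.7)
The plan is to deduce Theorem~\ref{thm:rectif_fb_p>k} directly from Theorem~\ref{thm:rectifiability_free_boundary} by showing that, under the stronger assumption $p>k$, the upper $(k-1)$-density $\tuu(\muv,x)$ is automatically finite at every point $x\in\M$. Once this is established, the set $E$ appearing in the definition \eqref{eq:definition_mus} of $\mus$ agrees with $\{x\mid \tlu(\muv,x)>0\}$ up to a $\muv$-null set, so the conclusion of Theorem~\ref{thm:rectifiability_free_boundary} gives exactly the rectifiability of $\muv \llcorner \{x\mid \tlu(\muv,x)>0\}$.

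For the finiteness of $\tuu(\muv,x)$ the only non-trivial case is $x\in\pM$, since $\muv$ is supported on $\pM$ so points off $\pM$ have density zero. Fix $x_0\in\pM$ and $r\leq R(\M)$. The boundary estimate \eqref{eq:estimate_measure_boundary} gives
\begin{equation*}
\muv\bigl(B_{r/2}(x_0)\bigr)
\leq \frac{c}{r}\,\nv\bigl(B_r(x_0)\bigr) + \int_{B_r(x_0)} |H|\dif \nv.
\end{equation*}
For the second summand Hölder's inequality yields
\begin{equation*}
\int_{B_r(x_0)} |H|\dif \nv \leq \norm{H}_{L^p(\nv)}\, \nv\bigl(B_r(x_0)\bigr)^{1-1/p}.
\end{equation*}
Since $p>k$ and $H\in L^p(\M,\nv)$, the monotonicity formula from Corollary~\ref{cor:monotonicity_formula} provides a uniform bound $\nv(B_r(x_0))\leq C r^k$ for $r$ small (the $L^p$ tail of $H$ contributes a term that vanishes as $r\to 0$). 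Substituting, we obtain
\begin{equation*}
\muv\bigl(B_{r/2}(x_0)\bigr) \leq C_1 r^{k-1} + C_2 r^{k(1-1/p)}.
\end{equation*}
Because $p>k$ we have $k(1-1/p) = k - k/p > k-1$, so the second term is $o(r^{k-1})$ as $r\to 0$. Dividing by $\omega_{k-1}(r/2)^{k-1}$ and letting $r\to 0$ therefore gives $\tuu(\muv,x_0)<+\infty$ uniformly in $x_0\in\pM$.

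With this pointwise finiteness in hand, the set $\{x\mid \tuu(\muv,x)=+\infty\}$ is empty, hence the set $E$ in \eqref{eq:definition_mus} coincides with $\{x\mid \tlu(\muv,x)>0\}$ (the upper bound $\tuu<\infty$ on the latter set is automatic). Consequently $\mus = \muv \llcorner \{x\mid \tlu(\muv,x)>0\}$, and Theorem~\ref{thm:rectifiability_free_boundary} applied to $V$ (whose hypotheses hold a fortiori, since $p>k>1$) yields the desired $(k-1)$-rectifiability. There is no serious obstacle here: the only care needed is to confirm that the monotonicity statement in Corollary~\ref{cor:monotonicity_formula} is strong enough to give the Ahlfors-type upper bound $\nv(B_r)\leq Cr^k$ for $p>k$, which is standard and is precisely what the $L^p$-tail argument in monotonicity formulae delivers.
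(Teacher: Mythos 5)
Your proposal is correct and follows essentially the same route as the paper: the paper also deduces Theorem \ref{thm:rectif_fb_p>k} from Theorem \ref{thm:rectifiability_free_boundary} by combining the boundary estimate \eqref{eq:estimate_measure_boundary} with H\"older's inequality and the monotonicity formula of Corollary \ref{cor:monotonicity_formula} to show $\tuu(\muv,x)<+\infty$ at every $x\in\pM$ when $p>k$, so that $E=\{x\mid \tlu(\muv,x)>0\}$ and $\mus=\muv\llcorner\{x\mid \tlu(\muv,x)>0\}$.
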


To perform the analysis of the blow-ups of $V$ in the proof of Theorem \ref{thm:rectifiability_free_boundary}, we have to distinguish two cases:
\begin{itemize}
\item 
If $H \in L^p(\M,\nv)$ for some $p>k$, then the monotonicity formula (Corollary \ref{cor:monotonicity_formula}) assures the existence of the $k$-density $\tkv{\cdot} < +\infty$ and of tangent cones to $V$ at \emph{every} point in $\pM$.

\item The situation is more delicate when $p \leq k$, because the Lebesgue-Besicovitch differentation Theorem applied to the classical monotonicity identity for varifolds with bounded first variation guarantees (see e.g. \cite[Lemma 40.5]{simon:lectures}) the existence and finiteness of the $k$-density and of tangent cones just $\nv$-a.e.. Since $\nv(\pM)=0$ may hold true, it was in principle possible that $\tkv{x} = +\infty$ for every point in $\pM \cap \supp \nv$, and this would stop our analysis.
\end{itemize}
To overcome this difficulty, in subsection \ref{sec:proof_regular_set} we study more carefully
the set of points where the $k$-density of $V$ exists and is finite also when $p \leq k$.

The behavior of the density for points in $\interior{\M}$ was studied by Menne \cite[2.9-2.11]{isoperimetricmenne2009}: in that paper the author was interested in a lower bound for the lower density of the varifold with respect to Hausdorff measures; in particular he shows that, if the density has a lower bound $\nv$-a.e. and $H \in L^p(\M,\nv)$, then at $\haus{k-p}$-a.e. point, either the lower density still satisfies the lower bound, or it is equal to $0$.

Although the existence and finiteness of the density with respect to lower dimensional Hausdorff measures seems to be well-known at least for points in $\interior{\M}$, it does not appear in the literature; thus we report the result with its proof, both for points in $\interior{\M}$ and on $\pM$, since it can be useful for a future reference too.

More precisely, we define the \emph{density set} of $V$ (Definition \ref{def:regular_points_varifolds}), denoted by $\reg(V)$,
and we prove the following result.
\begin{theorem}\label{thm:singular_set_V_lower_dimensional}
Let $V \in \V_k(\M)$ be a varifold with free boundary at $\pM$ such that $H \in L^p(\M,\nv)$ for some $1\leq p \leq \infty$. Then
\begin{equation}\label{eq:dimension_singular_points}
\haus{s}\big(\M \- \reg(V)\big)=0
\qquad
\forall s > k-p
\end{equation}
and, for every $x_0 \in \reg(V)$ there exists an increasing function $\varphi_{x_0} \colon \R^+ \to \R^+$ 
such that
\begin{equation}\label{eq:monotonicity_phi}
\frac{\nv \big( B_r(x_0) \big)}{r^k}
\leq
\frac{\nv \big( B_t(x_0) \big)}{t^k}
+ \varphi_{x_0}(t)
\quad
\forall \, 0 < r < t
,
\qquad \quad
\lim_{t \to 0} \varphi_{x_0}(t)=0.
\end{equation}
In particular,
the $k$-density $\T^k(\nv,x_0)$ exists and is finite for every $x_0 \in \reg(V)$; moreover, the restrictions of $\tkv{\cdot}$ to $\reg(V) \cap \interior{\M}$ and to $\reg(V) \cap \pM$ are upper semi-continuous.
\end{theorem}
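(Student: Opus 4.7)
The plan is to produce a quantitative monotonicity inequality for the density ratio $r\mapsto \nv(B_r(x_0))/r^k$ with an explicit remainder $\varphi_{x_0}$ that vanishes at $t=0$, and then to bound the Hausdorff measure of the set where $\varphi_{x_0}$ cannot be made small by a $5r$-covering argument calibrated to the $L^p$ integrability of $H$.

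\emph{Step 1 (monotonicity with remainder).} For $x_0\in\interior{\M}$, the standard trick of testing the first variation with the radial field $\phi_\sigma(|x-x_0|)(x-x_0)$ (with $\phi_\sigma$ approximating $\ind_{[0,\sigma]}$) yields, for $0<r<t$,
\begin{equation*}
\frac{\nv(B_t(x_0))}{t^k}-\frac{\nv(B_r(x_0))}{r^k}\geq \int_r^t \frac{1}{\sigma^{k+1}}\int_{B_\sigma(x_0)}\scal{H(x)}{x-x_0}\,d\nv(x)\,d\sigma.
\end{equation*}
For $x_0\in\pM$, Theorem \ref{thm:positivity_singular_first_variation} produces two additional terms on the right: one involving $\tilde{H}\in L^\infty$, which contributes only $O(t)$ to the remainder, and one involving $\muv$, which is estimated using the local bound \eqref{eq:estimate_measure_boundary}. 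Absorbing these, one obtains an inequality of the form $\nv(B_t(x_0))/t^k\leq \nv(B_r(x_0))/r^k+\varphi_{x_0}(t)$ for a candidate remainder $\varphi_{x_0}(t)$ built out of iterated integrals of $|H|$ and of $\nv(B_\sigma(x_0))/\sigma^k$.

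\emph{Step 2 (vanishing of the remainder and regularity of $\tk(\nv,\cdot)$).} H\"older's inequality with exponents $p,p'=p/(p-1)$ gives $\int_{B_\sigma(x_0)}|H|\,d\nv\leq \|H\|_{L^p(B_\sigma(x_0),\nv)}\,\nv(B_\sigma(x_0))^{1/p'}$. On any point $x_0$ at which $\limsup_{\sigma\to 0}\nv(B_\sigma(x_0))/\sigma^k$ is finite, substituting the bound $\nv(B_\sigma)\leq C\sigma^k$ into this estimate shows that the inner integrand of $\varphi_{x_0}$ is integrable near $0$, so that $\varphi_{x_0}$ is well-defined, increasing, and $\varphi_{x_0}(t)\to 0$ as $t\to 0^+$; when $p<k$ a short bootstrap between the monotonicity and the H\"older estimate is required. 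The inequality \eqref{eq:monotonicity_phi} and the existence and finiteness of $\tk(\nv,x_0)$ then follow by letting $r\to 0$, and the upper semicontinuity on $\reg(V)\cap\interior{\M}$ and on $\reg(V)\cap\pM$ is obtained by combining \eqref{eq:monotonicity_phi} with the weak upper semicontinuity of $x\mapsto\nv(\overline{B_r(x)})$ and passing to the limit as $r\to 0$ uniformly on compact subsets of $\reg(V)$.

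\emph{Step 3 (Hausdorff dimension of the bad set).} By Step 2, $\M\setminus\reg(V)$ is contained in $B:=\{x_0\in\M:\limsup_{\sigma\to 0}\nv(B_\sigma(x_0))/\sigma^k=+\infty\}$, so it suffices to show $\haus{s}(B)=0$ for every $s>k-p$. Fixing $M,\delta>0$, define $B_{M,\delta}:=\{x_0\in B:\nv(B_\sigma(x_0))>M\sigma^k\text{ for some }\sigma\in(0,\delta)\}$ and extract from the associated collection of balls a disjoint Vitali subfamily $\{B_{\sigma_i}(x_i)\}$ with $B_{M,\delta}\subseteq\bigcup_i B_{5\sigma_i}(x_i)$. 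Then
\begin{equation*}
\haus{s}_{5\delta}(B_{M,\delta})\leq C\sum_i \sigma_i^s\leq \frac{C}{M}\sum_i \sigma_i^{s-k}\nv(B_{\sigma_i}(x_i)).
\end{equation*}
Applying the monotonicity from Step 1 on each ball, bounding $\nv(B_{\sigma_i})$ from above, and using H\"older in $i$ relates the right-hand side to $\|H\|_{L^p(\M,\nv)}^{\alpha}\nv(\M)^{\beta}\delta^{s-k+p}$ for the appropriate exponents $\alpha,\beta$ dictated by scaling; the choice $s>k-p$ is precisely what makes this prefactor vanish as $\delta\to 0$. Sending first $\delta\to 0$ and then $M\to\infty$ gives $\haus{s}(B)=0$, proving \eqref{eq:dimension_singular_points}.

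\emph{Main obstacle.} The most delicate point is Step 3: one has to convert a pointwise blow-up of the density ratio into a summable geometric estimate involving $\|H\|_{L^p}$, and the exponent $k-p$ comes from the scaling of Riesz-potential-type quantities. In particular, on $\pM$ one must carry $\muv$ consistently through the covering argument; this is where the local bound \eqref{eq:estimate_measure_boundary} is essential to prevent the boundary mass from creating a separate bad set of larger dimension.
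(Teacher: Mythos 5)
There is a genuine gap, and it sits exactly at the point you flag as requiring ``a short bootstrap'': for $p\le k$, finiteness of $\limsup_{\sigma\to 0}\nv(B_\sigma(x_0))/\sigma^k$ does \emph{not} make your remainder $\varphi_{x_0}$ finite, let alone vanishing. Feeding $\nv(B_\sigma)\le C\sigma^k$ into H\"older gives an integrand of order $\sigma^{-k/p}\,\|H\|_{L^p(B_\sigma(x_0),\nv)}$, and with no information beyond $H\in L^p(\M,\nv)$ this behaves like $\sigma^{-k/p}$, which is not integrable near $0$ when $p\le k$ (e.g. $\int_{B_\sigma}|H|^p\dif\nv\sim\sigma^{k-p}$ is perfectly compatible with bounded density ratios, and then the integrand is $\sim\sigma^{-1}$). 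What rescues the argument is precisely the Morrey-type decay $\limsup_{r\to 0}r^{-s}\int_{B_r(x_0)}|H+\tilde H|^p\dif\nv=0$ for some $s>k-p$, which makes the exponent $-(k-s)/p>-1$ integrable; but this decay is the \emph{definition} of $\reg(V)$ in the paper (Definition \ref{def:regular_points_varifolds}), and your proposal never invokes it. As a consequence, your Step 3 inclusion $\M\setminus\reg(V)\subseteq B$ is unsupported (and in fact points in the wrong direction: a point can have bounded density ratios yet fail the decay condition for every $s>k-p$), so even a complete covering argument for $B$ would not yield \eqref{eq:dimension_singular_points} for the set $\reg(V)$ that the rest of the paper uses.

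The covering computation itself is also not carried out at the one place where it is hard. From the monotonicity inequality the only upper bound on $\nv(B_{\sigma_i}(x_i))/\sigma_i^k$ is a constant depending on a fixed outer radius, which makes $\sum_i\sigma_i^{s-k}\nv(B_{\sigma_i}(x_i))\lesssim\sum_i\sigma_i^s$ — circular. To extract the factor $\delta^{\,s-k+p}$ you would have to run the monotonicity \emph{backwards} (large density ratio at scale $\sigma_i$ forces a large curvature contribution on $B_{\sigma_i}(x_i)$) and then sum the resulting Riesz-potential quantities over the disjoint family; this is the substance of Menne's argument and is not ``dictated by scaling'' alone. The paper's route avoids all of this: it sets $\mu=|H+\tilde H|^p\nv$, observes that $\T^{*s}(\mu,x)=0$ for $\mu$-a.e.\ $x$ because both $\T^k(\nv,\cdot)$ and the Radon--Nikodym derivative $\dif\mu/\dif\nv$ exist and are finite $\nv$-a.e.\ (so $\mu(B_r)/r^s\to 0$ once $s<k$), and then applies the density-comparison theorem \cite[Theorem 6.9]{mattila_1995} to conclude $\haus{s}(\M\setminus A_s)=0$ directly — no Vitali covering and no interaction between the covering and $\muv$ is needed. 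Your Steps 1 and 2 otherwise track the paper's Lemma \ref{lmm:monotonicity_inequality_s}, but they must be run at points of $\reg(V)$ as actually defined, with $s\in(k-p,k]$ chosen so that $x_0\in A_s$.
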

Thus the set of points where $\tkv{\cdot}$ is infinte or does not exist has Hausdorff dimension at most $k-p$.
$\reg(V)$ is the ``good set" where we are able to study the blow-ups of $\nv$ and $\mus$, to conclude the proof of Theorem \ref{thm:rectifiability_free_boundary}. We remark that, if $p >k$, then the theorem is the well-known existence of the density at \emph{every} point and is a straightforward consequence of Corollary \ref{cor:monotonicity_formula}.

Besides its application in the proof of Theorem \ref{thm:rectifiability_free_boundary}, Theorem \ref{thm:singular_set_V_lower_dimensional} is also interesting in itself, since it is a natural counterpart of the similar result for the set of Lebesgue points of a Sobolev function proved by Federer and Ziemer in \cite{federer1972lebesgue}: if $f \in W^{1,p}(\M)$ for $p \in [1,n]$ and if $\leb(f)$ is the set of Lebesgue points of $f$, then $\M \- \leb(f)$ has Hausdorff dimension at most $n-p$.

\subsection{Outline of the paper}

In section \ref{sec:notations} we recall the notations and the definitions used throughout the paper.

In section \ref{sec:proof_positive_normal_variation} we first prove Lemma \ref{lmm:constancy}, which concerns the behavior of $V$ on $\pM$; next we move on to the proof of Theorem \ref{thm:positivity_singular_first_variation}.

In section \ref{sec:consequences_bv} we describe some consequences of Theorem \ref{thm:positivity_singular_first_variation}: we notice that, if $k=n-1$, $\tilde{H}$ coincides with the mean curvature of $\pM$; next we adapt Theorem \ref{thm:positivity_singular_first_variation} to an other class of varifolds; we move on to varifolds with free boundary, proving that they have bounded first variation, establishing some monotonicity formulae and refining Lemma \ref{lmm:constancy} in this framework. Lastly, in subsection \ref{sec:proof_regular_set}, we prove Theorem \ref{thm:singular_set_V_lower_dimensional}.

In section \ref{sec:proof_rectifiability} we prove Theorem \ref{thm:rectifiability_free_boundary}; this is obtained by the study of tangent varifolds to $V$ at points in $\reg(V)$, to prove that $\mus$ satisfies the hypotheses of the Marstrand-Mattila Rectifiability Criterion.

\subsection{Acknowledgments} I am grateful to Professor Guido De Philippis for his invaluable help, and to Professor Ulrich Menne and Carlo Gasparetto for the useful discussions and comments about these topics.

\section{Notations}\label{sec:notations}
\subsection{Basic notations}
For a fixed orthonormal system of coordinates, we denote by $e_i$ the $i$-th coordinate unit vector. If $x \in \ru$, we denote by $B_r(x)$ the closed ball with center $x$ and radius $r$ and by $\omega_n$ the Lebesgue measure of the unit ball in $\R^n$. Moreover we set $B_r := B_r(0)$.
$c$ and $c'$ are generic positive constants, unless otherwise specified.
If $v \in \R^{n}$ we call $\trasl_v$ the translation
\begin{equation*}
\trasl_v: x \mapsto x + v.
\end{equation*}
For each $A \subset \ru$, we denote by $\ind_A$ the indicator function of $A$, by $\clos{A}$ and $\interior{A}$ respectively the closure and the interior of $A$ in the euclidean topology.
If $A \subset \ru$,
and $r>0$ we write $U_r(A)$ for the tubular neighborhood of $A$, i.e.
\begin{equation*}
U_r(A)
=
\bigcup_{x \in A} B_r(x).
\end{equation*}
We denote by $S$ a generic $k$-dimensional linear subspace (or $k$-plane) of $\ru$ and we write $S^\perp$ for the orthogonal complement of $S$ in $\R^n$. We denote by $P_S$ the orthogonal projection on $S$. If $X$ is a $C^1$ vector field, we call $\dive_S X$ the scalar product $P_S \cdot DX$. If $\tau_1, \dots, \tau_k$ is an orthonormal basis of $S$, by simple computations one has
\begin{equation}\label{eq:tangential_divergence_defi}
\dive_S X(x)=
\sum_{i=1}^k D_{\tau_i} \scal{X(x)}{\tau_i}.
\end{equation}
Throughout the paper, $\gamma \in C^{\infty}\big([0,\infty)\big)$ denotes a cut-off function such that
\begin{itemize}
\item $\gamma(t) = 1$ for each $t \in [0, \frac{1}{2}]$;
\item $\gamma(t)=0$ for each $t \geq 1$;
\item $\gamma'(t) \leq 0$ and $|\gamma'(t)| \leq 3$ for every $t \in \R$.
\end{itemize}
For each $r>0$ and $x \in \ru$ we consider the dilation map
\begin{equation}\label{eq:dilation_map}
\dil{x,r}(y)= \frac{1}{r}(y-x).
\end{equation}
If $\Gamma$ is a $C^1$ $k$-dimensional sub-manifold in $\R^n$ and $x \in \Gamma$, we write $T_x \Gamma$ for the tangent space to $\Gamma$ at $x$. We see $T_x \Gamma$ as an immersed $k$-plane in $\R^n$; more precisely, we see $T_x \Gamma$ as the blow-up of $\Gamma$ at the point $x$. If $\Gamma$ has non-empty boundary $\partial \Gamma$ of class $C^1$ and if $x \in \partial \Gamma$, we see $T_x \Gamma$ as containing $T_x \partial \Gamma$, which divide $T_x \Gamma$ into two half-spaces. We call these two parts $T_x^+ \Gamma$ and $T_x^- \Gamma$ and we set $T_x^+ \Gamma$ to be the blow-up at $x$ of the interior part of $\Gamma$.

We work on a compact domain $\M \subset \R^{n}$ with $C^2$ boundary $\pM$. We write $N(x)$ for the exterior unit normal vector to $\pM$ at $x$. In the following, $d$ denotes the signed distance function from $\pM$ such that $d>0$ in $\interior{\M}$, that is
\begin{equation}\label{eq:distance_function_pM}
d(x)
=
\begin{cases}
\inf \{ |x-y| \mid y \in \pM \}
&
\mbox{if } x \in \M
\\
- \inf \{ |x-y| \mid y \in \pM \}
&
\mbox{if } x \in \R^n \setminus \M.
\end{cases}
\end{equation}
Unless otherwise specified, we denote by $R=R(\M)>0$ a number such that $d$ is $C^2$ in $\clos{U_R(\pM)}$. Thus $\nabla d$ exists in $\clos{U_R(\pM)}$ and points inside $\M$.

We work with several classes of vector fields on $\M$, which we denote with the letter $\X$ with subscripts based on their behavior on $\pM$:
\begin{gather}\label{eq:vector_fields}
\Xm = C^1(\clos{\M},\R^{n+1}),
\qquad
\Xt = \{X \in \Xm \mid X(x) \in T_x \pM, \,\, \forall x \in \pM\},
\notag
\\
\Xp = \{X \in \Xm \mid X(x) \in (T_x \pM)^\perp ,\,\, \forall x \in \pM\}
\\
\Xo = \{X \in \Xm \mid X(x)=0 , \,\,\forall x \in \pM\}
\quad
\Xc = \{X \in \Xm \mid \supp X \cc \interior{\M}\}. \notag
\end{gather}
If $\Gamma$ is a $C^2$ submanifold of $\R^n$, by slight abuse of notation we write $\X_t(\Gamma)$ (respectively $\X_0(\Gamma)$) for the set of compactly supported $C^1$ vector fields on $\R^n$ that are tangent to $\Gamma$ (respectively that vanish on $\Gamma$).

\subsection{Measures, rectifiable sets}
If $A \subset \R^{n}$, $\me(A,\R^{m})$ is the space of $\R^m$-valued Radon measures on $A$ and $\mep(A)$ is the space of positive Radon measures on $A$. If $\mu \in \me(A,\R^m)$ we denote by $|\mu|$ the total variation measure of $\mu$. If $B \subset \ru$ is Borel, we write $\mu \llcorner B$ for the restriction of the measure $\mu$ to $B$.
If $A \subset \ru$ has non-empty interior, we endow $\me(A,\R^m)$ with the weak*-topology: i.e. we say that a sequence of Radon measures $\{\muj\}_j$ converges to $\mu$ ($\muj \wto \mu$) if
\begin{equation*}
\lim_{j \to \infty}
\int f \dif \muj
=
\int f \dif \mu
\qquad
\forall f \in C_c(A,\R^m).
\end{equation*}
If $\mu \in \mep(\ru)$, $x \in \ru$, $k \in \N$, we define the upper and the lower $k$-densities of $\mu$ at $x$:
\begin{equation*}
\Theta^{*k}(\mu,x)
=
\limsup_{r \to 0} \frac{\mu\big(B_r(x)\big)}{r^k}
\qquad
\Theta_*^{k}(\mu,x)
=
\liminf_{r \to 0} \frac{\mu\big(B_r(x)\big)}{r^k}.
\end{equation*}
If the above limits coincide, then we define the $k$-density of $\mu$ at $x$ as their common value, which we denote by $\T^k(\mu,x)$.
The $k$-singular set $\sing^k(\mu)$ is defined as
\begin{equation*}
\sing^k(\mu)=
\{x \in \M \mid \T^{* k}(\mu,x) = +\infty\}.
\end{equation*}
If $\mu \in \me(A,\R^m)$ and $f: A \to \R^N$ is proper, we define the push-forward $f_\# \mu$ of $\mu$ through $f$ as the Radon measure in $\me(\R^N,\R^m)$ defined by
\begin{equation*}
f_\# \mu (B)
=
\mu\big(f^{-1}(B) \big)
\qquad
\forall B \subset \R^N
\mbox{ Borel.}
\end{equation*}
If $\mu \in \mep(\R^n)$, we say that $\nu$ is a $k$-blow-up of $\mu$ at $x$ or a $k$-tangent measure to $\mu$ in $x$ if there exists a sequence $r_j \downarrow 0$ such that
\begin{equation}\label{eq:weak_convergence_tangent}
\mu_j
:=
\frac{1}{r_j^k} \big( \dil{x,r_j}\big)_\# \mu
\wto
\nu.
\end{equation}
We denote by $\tang^k(\mu,x)$ the (possibly empty) set of $k$-blow-ups of $\mu$ at the point $x$. If $\tuk(\mu,x)<\infty$ then, by Banach-Alaoglu Theorem, $\tang^k(\mu,x)$ is non-empty. If $\tlk(\mu,x) >0$, then every $k$-blow-up of $\mu$ at $x$ is non-trivial; indeed, if $\nu \in \tang^k(\mu,x)$ and $\mu_j \wto \nu$ as in \eqref{eq:weak_convergence_tangent}, then
\begin{equation}\label{eq:blow-up_non-trivial}
\nu(B_1)
\geq
\limsup_j \mu_j (B_1)
= \limsup_k \frac{\mu (B_{r_j}(x))}{r_j^k}
\geq
\tlk(\mu,x)
>0.
\end{equation}
For each $s>0$, we denote by $\haus{s}$ the $s$-dimensional Hausdorff measure and, if $A \subset \R^N$, $\hdim(A)$ denotes the Hausdorff dimension of $A$.
We say that a Borel set $M \subset \ru$ is $k$-rectifiable if there exist $M_0 \subset \ru$ with $\haus{k}(M_0)=0$ and a countable family of $C^1$ $k$-submanifolds $\{M_j\}_{j = 1}^\infty$ such that
\begin{equation*}
M
\subset
\bigcup_{i=0}^\infty M_j.
\end{equation*}
We say that a measure $\mu \in \mep(\ru)$ is $k$-rectifiable if there exist a $k$-rectifiable set $M$ and a positive function $\theta \in L_{\loc}^1(M, \haus{k})$ such that $\mu= \theta \haus{k}\llcorner M$. 
\subsection{Varifolds}
If $1 \leq k \leq n$ we call $G(k,n)$ the Grassmannian of the un-oriented $k$-dimensional linear subspaces (or $k$-planes) of $\R^{n}$. If $A\subset \R^{n}$ we denote by $G_k(A) := A \times G(k,n)$ the trivial Grassmannian bundle over $A$.


A \emph{$k$-varifold} on $A$ is a positive Radon measure on $G_k(A)$. We denote by $\V_k(A)$ the set of all $k$-varifolds on $A$ and we endow $\V_k(A)$ with the topology of the weak*-convergence of Radon measures, i.e. we say that $V_j \wto V$ if
\begin{equation*}
\lim_{j \to \infty} \int_{G_k(A)} \varphi(x,S) \dif V_j(x,S)
=
\int_{G_k(A)} \varphi(x,S) \dif V(x,S)
\qquad
\forall \varphi \in C_c(G_k(A)).
\end{equation*}
A $k$-rectifiable measure $\mu= \theta \haus{k}\llcorner M$ in $\R^m$ induces the $k$-varifold
\begin{equation*}
V = \theta \haus{k}\llcorner M \otimes \delta_{T_xM},
\end{equation*}
where $T_xM$ is the approximate tangent space of $M$ at $x$. A varifold that is induced by a rectifiable set is called a \emph{rectifiable varifold}. If the multiplicity function assumes only integer values, we say that the varifold is \emph{integer rectifiable}.
If $V$ is a $k$-varifold on $\om$, the \emph{mass} $\norm{V}$ (or \emph{total variation}) of $V$ is the positive Radon measure defined as
\begin{equation*}
\norm{V}(A) = V(G_k(A))
\qquad
\forall A \subset \om \mbox{ Borel}.
\end{equation*}
If $V$ is the $k$-varifold induced by the rectifiable measure $\mu= \theta \haus{k}\llcorner M$, then
\begin{equation*}
\norm{V}(B)
=
\int_B \theta(x) \dif \haus{k}(x).
\end{equation*}
By slight abuse of notation, we often denote $\supp \nv$ by $\supp V$.
If $\om \subset \ru$ is a domain, $V \in \V_k(\om)$ and if $\psi:\om \to \ru$ is a diffeomorphism, the \emph{push forward} $\psi_{\sharp}V$ of $V$ through $\psi$ is the varifold in $\V_k\big(\psi(\om)\big)$ such that, $\forall \varphi \in C_c\big(G_k(\psi(\om))\big)$,
\begin{equation}\label{eq:push-forward_varifolds}
\int_{G_k\left(\psi(\om)\right)} \varphi(y,T) \dif \psi_{\sharp}V(y,T)
=
\int_{G_k(\om)} J_S\psi(x) \varphi\big(\psi(x),\dif \psi_x(S)\big) \dif V(x,S),
\end{equation}
where $J_S \psi(x)$ is the Jacobian of $\psi$ relative to the $k$-plane $S$, i.e.
\begin{equation*}
J_S \psi(x) = \sqrt{\det\big( (\dif \psi_x)_{|S}^* \circ \big( (\dif \psi_x)_{|S} \big)}.
\end{equation*}
We notice that this \emph{is not} the push forward of measures previous defined (which is denoted by the different symbol $f_\# \mu$). In fact, the push forward of varifolds is defined in this way in order to ensure the validity of the area formula: indeed if $V$ is induced by a rectifiable set $M$, then $\psi_\sharp V$ is induced by $\psi(M)$.
If $V \in \V_k(\R^n)$, we say that $C \in \V_k(\R^n)$ is a \emph{blow-up} of $V$ at $x$ or a \emph{tangent varifold} to $V$ at $x$ if there exists a sequence of radii $r_j \downarrow 0$ such that
\begin{equation*}
(\dil{x,r_j})_\sharp V
\wto
C.
\end{equation*}
We write $\tang(V,x)$ for the set of tangent varifold to $V$ at the point $x$.

If $V \in \V_k(\M)$ and if $X \in \Xm$ the \emph{first variation} $\delta V(X)$ of $V$ with respect to $X$ is
\begin{equation*}
\delta V(X) = \left. \frac{\dif}{\dif t} \Big( \norm*{(\psi_t)_{\sharp} V}(\ru) \Big) \right|_{t=0}
\end{equation*}
where $\psi_t$ is the flow map of $X$ at the time $t$.
The following \emph{first variation formula} holds:
\begin{equation*}
\delta V(X) = \int_{G_k(\M)} \dive_S X(x) \, \dif V(x,S).
\end{equation*}
We now define the class of varifolds with \emph{bounded first variation}:
\begin{defi}
We say that a varifold $V$ has \emph{bounded first variation} in $\M$ if
\begin{equation}\label{eq:bounded_firsv_variation}
\sup \{|\delta V(X)| \mid X \in \Xm \,,
\max|X| \leq 1
\} < + \infty.
\end{equation}
If \eqref{eq:bounded_firsv_variation} holds with a proper subset of $\Xm$(e.g. $\Xc$, $\Xo$...) in place of $\Xm$, we say that $V$ has bounded first variation with respect to this subset. 
\end{defi}
Therefore $V$ has bounded first variation if there exists $\delta V \in \me(\M,\ru)$ such that, for any $X \in \Xm$,
\begin{equation}
\delta V(X)
=
\int_\M \dive_S X(x) \dif V(x,S)
=
\int_\M \scal{X(x)}{\zeta(x)} \dif |\delta V|(x),
\end{equation}
where $\zeta$ is the polar vector of $\delta V$ with respect to $|\delta V|$.

If $V$ has bounded first variation, then by Lebesgue decomposition there exist $\nds{V} \in \mep(\M)$, a $\nds{V}$-measurable function $\eta: \M \to \R^n$ and a $\norm{V}$-measurable function $H: \M \to \R^n$ such that
\begin{equation*}
\delta V(X)= - \int_\M \scal{H}{X} \, \dif \norm{V} + 
\int_\M \scal{X}{\eta} \dif \nds{V}
\qquad
\forall X \in \Xm
\end{equation*}
where $\nds{V}$ is the singular part of $|\delta V|$ with respect to $\norm{V}$:
\begin{equation*}
\nds{V} = |\delta V|\llcorner Z
\qquad
Z=
\Big\{
x \in \M \mid \limsup_{r \to 0} \frac{|\delta V|(B_r(x))}{\norm{V}(B_r(x))} = +\infty
\Big\}.
\end{equation*}
Since the previous formula is similar to the corresponding one for smooth surfaces, we call $H$ the \emph{generalized mean curvature} of $V$, $\nds{V}$ the \emph{boundary measure} of $V$, the set $Z$ is the \emph{boundary} of $V$ and $\eta$ is the \emph{unit co-normal} of $V$.

We now define the classes of varifolds with \emph{generalized mean curvature}:
\begin{defi}\label{def:varifold_generalized_curvature}
We say that $V \in \V_k(\M)$ has \emph{generalized mean curvature} with respect to $\Xc$ (respectively $\Xo$) if there exists a $\nv$-measurable vector field $H \in L^1(\M,\nv)$ such that $H(x)=0$ for $\nv$-a.e.\ $x \in \pM$ and for any $X \in \Xc$ (respectively $\Xo$) the following formula holds:
\begin{equation}\label{eq:generalized_mean_curvature}
\int_{G_k(\M)} \dive_S X(x) \, \dif V(x,S)
=
- \int_\M \scal{H}{X} \, \dif \norm{V}.
\end{equation}
\end{defi}
\begin{rem}\label{rem:generalized_curvature_vanishes}
The assumption that $H(x)=0$ for $\nv$-a.e.\ $x \in \pM$ is important: without this hypothesis, the generalized mean curvature is not uniquely defined, that is any combination $H + K$ with $\supp K \subset \pM$ satisfies \eqref{eq:generalized_mean_curvature}. Without excluding this ambiguity, \eqref{eq:normal_first_variation} is no longer true because of the extra term $\int_{\pM} \scal{K}{X} \dif \nv$ on the right-hand side.
Similarly, if $V$ has bounded variation with respect to $\Xc$ or $\Xo$, if not otherwise specified we assume that the polar vector of $\delta V$ vanishes at $|\delta V|$-a.e. point on $\pM$.
\end{rem}

Thus $V$ has \emph{generalized mean curvature} with respect to $\Xc$ (respectively $\Xo$, $\Xt$) if it has bounded variation with respect to $\Xc$ (respectively $\Xo$, $\Xt$) and $\delta V$ has no singular part with respect to $\nv$ when we test with vector fields in $\Xc$ (respectively $\Xo$, $\Xt$).

\begin{defi}[Varifold with free boundary]
We say that $V \in \V_k(\M)$ has \emph{free boundary} at $\pM$ if there exists a $\nv$-measurable vector field $H \in L^1(\M,\nv)$ such that $H(x)$ is tangent to $\pM$ for $\nv$-a.e.\ $x \in \pM$ and such that \eqref{eq:generalized_mean_curvature} holds for every $X \in \Xt$.
\end{defi}

\begin{rem}\label{rem:fb_curvature_tangent}
As in Remark \ref{rem:generalized_curvature_vanishes}, the assumption that $H$ is tangent to $\pM$ $\nv$-a.e. is important because otherwise \eqref{eq:total_fvf_varifold_free_boundary} is no longer true: every sum $H + K$, with $K$ orthogonal to $\pM$, satisfies  \eqref{eq:generalized_mean_curvature} for any $X \in \Xt$, whereas when we test with non-tangent vector fields the presence of $K$ is relevant.
As above, when we say that $V$ has bounded variation with respect to $\Xt$, if not otherwise specified we assume that the polar vector of $\delta V$ is tangent to $\pM$ at $|\delta V|$-a.e. point on $\pM$.
%
%
\end{rem}
\begin{rem}
As we mentioned in the introduction, a varifold with free bondary meets $\pM$ orthogonally in a weak sense: indeed, if $V$ is the varifold induced by a surface $\s$ with smooth boundary $\partial \s$, \eqref{eq:generalized_mean_curvature} implies that the conormal $\eta$ to $\partial \s$ is orthogonal to $\pM$.
\end{rem}

\section{Proof of Theorem \ref{thm:positivity_singular_first_variation}}\label{sec:proof_positive_normal_variation}
\subsection{Constancy Lemma}
The proof of the Theorem \ref{thm:positivity_singular_first_variation} is based on the following lemma, which is a form of the Constancy Theorem \cite[Theorem 41.1]{simon:lectures} with weaker hypotheses (and conclusion) and it is interesting in itself. The result is used in the proof of the Theorem \ref{thm:positivity_singular_first_variation} to deal with the case $\nv(\pM) >0$.
\begin{lemma}\label{lmm:constancy}
Let $\Gamma \subset \R^n$ be a $C^2$-hypersurface without boundary, let $V \in \V_k(\ru)$ have bounded first variation $\delta_0 V$ with respect to $\X_0(\Gamma)$ (that is vector fields that vanish on $\Gamma$).
Then
\begin{equation*}
V \big(
\{
(x,S) \in G_k(\R^n) \mid x \in \Gamma, S \not\subset T_x \Gamma
\}
\big)
=0.
\end{equation*}
\end{lemma}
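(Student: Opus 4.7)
The plan is to test the first variation against vector fields that vanish on $\Gamma$ but whose normal derivative at $\Gamma$ is a prescribed multiple of the unit normal $\nu$, thereby extracting information about the planes $S$ charged by $V$ over $\Gamma$. The key algebraic identity is that $S \subset T_x \Gamma$ if and only if $\nu(x) \perp S$, i.e.\ $|P_S \nu(x)|^2 = 0$; thus it suffices to show, for every non-negative $\phi \in C_c^1(\R^n)$, that $\int_{G_k(\R^n)} \phi(x)\,\ind_\Gamma(x)\,|P_S \nu(x)|^2 \dif V(x,S) = 0$.

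Fix such a $\phi$. In a tubular neighborhood of the compact set $\supp \phi \cap \Gamma$ the signed distance function $d$ to $\Gamma$ is $C^2$ and satisfies $\nabla d = \nu$. Pick a cutoff $\chi \in C_c^\infty(\R)$ with $\chi \equiv 1$ on $[-1/2,1/2]$ and $\supp \chi \subset (-1,1)$, and define the ``ramp'' $\eta_\epsilon(t) = t\chi(t/\epsilon)$; then $\eta_\epsilon(0) = 0$, $\eta_\epsilon'(0) = 1$, $\supp \eta_\epsilon \subset (-\epsilon,\epsilon)$, $|\eta_\epsilon(t)| \leq \epsilon$, and $|\eta_\epsilon'|$ is bounded uniformly in $\epsilon$ by some constant $M$. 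Set
\[
X_\epsilon(x) = \phi(x)\,\eta_\epsilon(d(x))\,\nabla d(x),
\]
extended by zero outside the tubular neighborhood. Then $X_\epsilon \in \X_0(\Gamma)$ and $\normi{X_\epsilon} \leq \epsilon \normi{\phi}$, so the bounded first variation hypothesis yields $|\delta V(X_\epsilon)| \leq \epsilon\, \normi{\phi}\, |\delta_0 V|(\supp \phi) \to 0$ as $\epsilon \to 0$.

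A direct computation using $\nu = \nabla d$ gives
\[
\dive_S X_\epsilon(x) = \phi(x)\,\eta_\epsilon'(d(x))\,|P_S \nabla d(x)|^2 + \eta_\epsilon(d(x))\, R(x,S),
\]
where the remainder $R$ is bounded uniformly by a constant depending only on $\phi$ and $\nabla^2 d$. Integrating against $V$ on the compact set $G_k(\supp \phi)$, the remainder term is of size $O(\epsilon)$, so from $\delta V(X_\epsilon) \to 0$ one deduces
\[
\lim_{\epsilon \to 0}\int_{G_k(\R^n)} \phi(x)\,\eta_\epsilon'(d(x))\,|P_S \nabla d(x)|^2 \dif V(x,S) = 0.
\]
Pointwise, $\eta_\epsilon'(d(x)) \to \ind_\Gamma(x)$ (it equals $1$ on $\Gamma$ and vanishes off $\Gamma$ once $\epsilon$ is smaller than $|d(x)|$), and $|\eta_\epsilon'| \leq M$ uniformly. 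Since $V$ is finite on $G_k(\supp \phi)$, dominated convergence shows that the left-hand side converges to $\int_{G_k(\Gamma)} \phi\, |P_S \nu|^2 \dif V$, which must therefore vanish. As $\phi \geq 0$ is arbitrary, $|P_S \nu|^2 = 0$ for $V$-a.e.\ $(x,S) \in G_k(\Gamma)$, proving the claim.

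The main technical point is the design of the test vector field: the normal component $|P_S \nu|^2$ must emerge as the leading-order term of $\dive_S X_\epsilon$ on $\Gamma$ while $\normi{X_\epsilon} \to 0$, and the ramp-and-cutoff $\eta_\epsilon(t) = t\chi(t/\epsilon)$ achieves both simultaneously through $\eta_\epsilon'(0) = 1$ and $|\eta_\epsilon| \leq \epsilon$. A minor subtlety is that $d$ is only $C^2$ near $\Gamma$, but since $\phi$ has compact support the tubular neighborhood argument is entirely local; this also sidesteps any global orientability concern, as $\nu$ need only be chosen on a neighborhood of $\supp \phi \cap \Gamma$.
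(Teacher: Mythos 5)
Your proof is correct and follows essentially the same route as the paper: both test the first variation with a vector field of the form $\phi\cdot d\cdot(\text{cutoff in }d/\epsilon)\cdot\nabla d$ (your $\eta_\epsilon(d)=d\,\chi(d/\epsilon)$ is exactly the paper's $d\,\gamma(d/\rho)$, with $\phi$ in place of the radial cutoff), extract the leading term $|P_S\nabla d|^2$ from the tangential divergence, and kill the remainder using $|X_\epsilon|=O(\epsilon)$ together with the Radon-measure bound on $\delta_0 V$. Your single dominated-convergence step for $\eta_\epsilon'(d)\to\ind_\Gamma$ merely packages the paper's separate estimates for the $\gamma$ and $\tfrac{d}{\rho}\gamma'$ terms; the only point worth tightening is that a neighborhood of $\supp\phi\cap\Gamma$ need not itself be orientable, so one should (as the paper does) first localize by a covering argument before fixing $\nu=\nabla d$.
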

\begin{proof}
By a simple covering argument it is enough to prove the result locally: that is that, for each $x_0 \in \Gamma$, there exists $r=r(x_0)>0$ and a ball $B_r(x_0)$ such that
\begin{equation*}
V \big(
\{
(x,S) \in G_k(\R^n) \mid x \in \Gamma \cap B_r(x_0), S \not\subset T_x \Gamma
\}
\big)
=0.
\end{equation*}
We fix $x_0 \in \Gamma$ and without loss of generality we can assume that $x_0=0$.

Since $\Gamma$ is locally-orientable, there exists $r'>0$ and a ball $B_{r'}$ such that $B_{r'} \setminus \Gamma$ is made of two connected components $D^+$ and $D^-$ separated by $\Gamma$.
Only in this proof, $d$ denotes a fixed one of the two signed distance function from $\Gamma$ in $B_\rho$, that is
\begin{equation*}
d(x)
=
\begin{cases}
\inf \{ |x-y| \mid y \in \Gamma \}
&
\mbox{if } x \in D^+
\\
- \inf \{ |x-y| \mid y \in \Gamma \}
&
\mbox{if } x \in D^-.
\end{cases}
\end{equation*}
Since $\Gamma$ is of class $C^2$, there exists $r \leq r'$ such that $d \in C^2\big(\clos{B_r}\big)$. We set $r(x_0)=r$. 

We recall that $\gamma$ is the cut-off function defined in section \ref{sec:notations}.
Since $\nabla d(x)$ is orthogonal to $\Gamma$ for every $x \in \Gamma$, we have $|P_S \nabla d(x)|^2 =0$ if and only if $S \subset T_x \Gamma$. Therefore, to get the conclusion, it is enough to prove that
\begin{equation}\label{eq:equivalent_conclusion_constancy}
\int_{G_k(\Gamma)} \gamma\Big( \frac{|x|}{r}\Big)|P_S \nabla d(x)|^2 \dif V(x,S) = 0.
\end{equation}
To do so, we test \eqref{eq:generalized_mean_curvature} with a suitable vector field $X \in \Xo$.
If $\rho< r$, we choose $X(x)=d(x) \gamma\Big( \frac{|x|}{r}\Big) \gamma\Big( \frac{d(x)}{\rho}\Big) \nabla d(x)$. We clearly have $X \in \Xo$ and
\begin{equation*}
\begin{split}
\dive_S X(x)
= &
\gamma\Big( \frac{|x|}{r}\Big) \gamma\Big( \frac{d(x)}{\rho}\Big)|P_S\nabla d(x)|^2
+
\frac{d(x)}{r} \gamma'\Big( \frac{|x|}{r}\Big)  \gamma\Big( \frac{d(x)}{\rho}\Big) \scal{\frac{x}{|x|}}{P_S\nabla d(x)}
\\
& +
\frac{d(x)}{\rho} \gamma\Big( \frac{|x|}{r}\Big) \gamma'\Big( \frac{d(x)}{\rho}\Big)
|P_S\nabla d(x)|^2
+
d(x) \gamma\Big( \frac{|x|}{r}\Big) \gamma\Big( \frac{d(x)}{\rho}\Big)
\dive_S \nabla d(x).
\end{split}
\end{equation*}
Since $V$ has bounded first variation $\delta_0 V = \zeta |\delta_0 V|$ with respect to $\X_0(\Gamma)$ (where $\zeta$ is the polar vector of $\delta_0 V$ with respect its total variation $|\delta_0 V|$), testing \eqref{eq:generalized_mean_curvature} with $X$ we obtain
\begin{equation}\label{eq:fvf_constancy_lemma}
\begin{split}
&
\left|\int_{G_k(\R^{n})} \gamma\Big( \frac{|x|}{r}\Big) \gamma\Big( \frac{d(x)}{\rho}\Big) |P_S\nabla d(x)|^2 \dif V(x,S)\right|
\\
& \qquad \quad \leq
\left|\int_{G_k(\R^{n})} \frac{d(x)}{r} \gamma'\Big( \frac{|x|}{r}\Big)  \gamma\Big( \frac{d(x)}{\rho}\Big) \scal{\frac{x}{|x|}}{P_S\nabla d(x)}
\dif V(x,S)\right|
\\
& \qquad \qquad +
\left|\int_{G_k(\R^{n})}\frac{d(x)}{\rho} \gamma\Big( \frac{|x|}{r}\Big) \gamma'\Big( \frac{d(x)}{\rho}\Big) |P_S\nabla d(x)|^2 \dif V(x,S)\right|
\\
& \qquad \qquad +
\left| \int_{G_k(\R^{n})} d(x) \gamma\Big( \frac{|x|}{r}\Big) \gamma\Big( \frac{d(x)}{\rho}\Big) \dive_S \nabla d(x) \dif V(x,S) \right|
\\
& \qquad \qquad +
\left| \int_{\R^{n}} \scal{X(x)}{\zeta(x)} \dif |\delta_0 V|(x) \right|
\end{split}
\end{equation}
For the left-hand side of the above inequality, by dominated convergence we have
\begin{equation*}
\lim_{\rho \to 0}
\int_{G_k(\R^{n})} \gamma\Big( \frac{|x|}{r}\Big) \gamma\Big( \frac{d(x)}{\rho}\Big) |P_S\nabla d(x)|^2 \dif V(x,S)
=
\int_{G_k(\Gamma)} \gamma\Big( \frac{|x|}{r}\Big) |P_S \nabla d(x)|^2 \dif V(x,S)
.
\end{equation*}
Therefore, to show \eqref{eq:equivalent_conclusion_constancy}, we have to prove that the terms on the right-hand side of \eqref{eq:fvf_constancy_lemma} go to $0$ as $\rho \to 0$:
\begin{enumerate}
\item
Since $\gamma'$ is bounded and $d(x) \leq \rho$ by cut-off, for the first term we have
\begin{equation*}
\begin{split}
\lim_{\rho \to 0}
\left|
\int_{G_k(\R^{n})}\frac{d(x)}{r} \gamma' \Big( \frac{|x|}{r}\Big) \gamma\Big( \frac{d(x)}{\rho}\Big) \scal{\frac{x}{|x|}}{P_S\nabla d(x)}\dif V(x,S)
\right|
\leq
c \lim_{\rho \to 0} \frac{\rho}{r} \nv \big( U_\rho(\Gamma) \cap B_r \big)
=
0.
\end{split}
\end{equation*}
\item Since $\bigl| \frac{d}{\rho} \bigr| \leq 1$ and $\gamma'(s) \neq 0$ only if $s \in (1/2,1)$, for the second term we have
\begin{equation*}
\begin{split}
&
\lim_{\rho \to 0}
\left|
\int_{G_k(\R^{n})}\frac{d(x)}{\rho} \gamma \Big( \frac{|x|}{r}\Big)\gamma'\Big( \frac{d(x)}{\rho}\Big) |P_S\nabla d(x)|^2 \dif V(x,S)
\right|
\\
& \qquad \qquad \leq
3 \lim_{\rho \to 0} \nv \Big( \big(U_\rho(\Gamma) \setminus U_{\rho/2}(\Gamma) \big) \cap B_r \Big)
=
0.
\end{split}
\end{equation*}
\item
By the choice of $r$ we have that $|\dive_S \nabla d(x)| \leq c$ in $B_r$; thus
\begin{equation*}
\begin{split}
\lim_{\rho \to 0}
\left|
\int_{G_k(\R^{n})}d(x) \gamma \Big( \frac{|x|}{r}\Big) \gamma\Big( \frac{d(x)}{\rho}\Big) \dive_S \nabla d(x) \dif V(x,S)
\right|
\leq
c \lim_{\rho \to 0} \rho \nv \big( U_\rho(\Gamma) \cap B_r \big)
=
0.
\end{split}
\end{equation*}
\item For the last term, since $\delta_0 V$ is a Radon measure, it holds
\begin{equation*}
\begin{split}
\lim_{\rho \to 0}
\left|
\int_{\R^{n}} \scal{X(x)}{\zeta(x)} \dif |\delta_0 V|(x)
\right|
\leq
\lim_{\rho \to 0}\rho |\delta_0 V|\big( U_\rho(\Gamma) \cap B_r \big)
=
0.
\end{split}
\end{equation*}
\end{enumerate}
This completes the proof.
\end{proof}

\subsection{Proof of Theorem \ref{thm:positivity_singular_first_variation}}
We can now prove Theorem \ref{thm:positivity_singular_first_variation}.

\begin{proof}[Proof of Theorem \ref{thm:positivity_singular_first_variation}]
Let us fix $R>0$ (as in section \ref{sec:notations}) so that the distance function $d$ from $\pM$ defined in \eqref{eq:distance_function_pM} is of class $C^2$ in $\clos{U_R(\pM)}$.

In what follows we are going to repeatedly use the decomposition of a vector field $X \in \Xm$ we now present;
within $U_R(\pM)$, we can decompose $X$ in its normal and tangent component: there exists a scalar function $\chi(x)$ such that $X= X^\perp + X^T$ with $X^\perp(x) = \chi(x) \nabla d(x)$ and $\scal{X^T(x)}{\nabla d(x)}=0$ for all $x \in U_R(\pM)$.

\begin{steps}[wide,%
labelindent=0pt]
\item
We begin by cut-offing a vector field in its ``interior" and ``boundary" part.
For every $X \in \Xp$ and  $\rho < R$ one has
\begin{equation*}
X(x)
=
\gamma\Big( \frac{d(x)}{\rho}\Big) X(x)
+
\bigg( 1 - \gamma\Big( \frac{d(x)}{\rho}\Big) \bigg) X(x).
\end{equation*}
Therefore
\begin{equation}\label{eq:splitting_divergence}
\begin{split}
\int_{G_k(\M)} \dive_S X(x) \dif V(x,S)
= &
\int_{G_k(\M)} \dive_S \left[ \gamma\Big( \frac{d(x)}{\rho}\Big)X(x) \right] \dif V(x,S)
\\
&+
\int_{G_k(\M)} \dive_S \left[ \left( 1- \gamma\Big( \frac{d(x)}{\rho}\Big)\right) X(x) \right] \dif V(x,S).
\end{split}
\end{equation}
Thus we have splitted $X$ in the ``interior" and the ``boundary part" by cut-offing with $\gamma(d/\rho)$ and the idea is to send $\rho \to 0$.
\item\label{step:interior_dominated_convergence}
For the interior part,
since $\left( 1- \gamma\Big( \frac{d(x)}{\rho}\Big)\right) X(x) \in \Xc$ (i.e. it is compactly supported in the interior of $\M$), by \eqref{eq:generalized_mean_curvature} and dominated convergence we have
\begin{equation}\label{eq:compactly_supported_part}
\begin{split}
& \lim_{\rho \to 0}
\int_{G_k(\M)} \dive_S \left[ \left( 1- \gamma\Big( \frac{d(x)}{\rho}\Big)\right) X(x) \right]\dif V(x,S)
\\
& \qquad =
\lim_{\rho \to 0} \int_{\M} \left( 1- \gamma\Big( \frac{d(x)}{\rho}\Big)\right) \scal{X(x)}{H(x)} \dif \norm{V}(x)
\\
& \qquad =
- \int_{\interior{\M}} \scal{X(x)}{H(x)} \dif \norm{V}(x)
\\
& \qquad =
- \int_\M \scal{X(x)}{H(x)} \dif \norm{V}(x),
\end{split}
\end{equation}
where the last equality follows by the fact that $H$ is assumed to be equal to $0$ on $\pM$
(see Remark \ref{rem:generalized_curvature_vanishes} for more details).
\item
Since the limit in \eqref{eq:compactly_supported_part} exists, also for the ``boundary part" of $X$ the limit
\begin{equation}\label{eq:limit_boundary_part_X}
\begin{split}
\lim_{\rho \to 0}
\int_{G_k(\M)}  \dive_S \left[ \gamma\Big( \frac{d(x)}{\rho}\Big)X(x) \right] \dif V(x,S)
\end{split}
\end{equation}
exists. We want now to compute \eqref{eq:limit_boundary_part_X} and to show the existence of $\tilde{H}$ and $\muv$ on $\pM$. We begin by writing
\begin{equation}\label{eq:divergence_at_boundary}
\begin{split}
\int \dive_S \left[ \gamma\Big( \frac{d(x)}{\rho}\Big)X(x)
\right] \dif V(x,S)
= &
\int
\gamma\Big( \frac{d(x)}{\rho}\Big) \dive_S X(x)
\dif V(x,S)
\\
& +
\int \frac{1}{\rho}\gamma'\Big( \frac{d(x)}{\rho}\Big)\scal{ P_S \nabla d(x)}{X(x)} \dif V(x,S) .
\end{split}
\end{equation}
At this point, we want to study separately the limit as $\rho \to 0$ of each term in the right-hand side of the above equation.

\begin{itemize}
\item
For the first term of the right-hand side in \eqref{eq:divergence_at_boundary},
we expect that, as $\rho \to 0$, it give a sort of mean curvature of $\pM$. This expectation is justified by the fact that, on $\pM$, $V$ charges only planes that are tangent to $\pM$ by Lemma \ref{lmm:constancy} and because $X$ is orthogonal to $\pM$ on $\pM$.
More precisely, we are going to prove that there exists a $\nv$-measurable vector field $\tilde{H}$ such that
\begin{equation} \label{eq:divergence_boundary_part1}
\begin{split}
\lim_{\rho \to 0}
\int_{G_k(\M)} \gamma\Big( \frac{d(x)}{\rho}\Big) \dive_S X(x) \dif V(x,S)
= 
- \int_{\pM} \scal{\tilde{H}(x)}{X(x)} \dif \nv(x),
\end{split}
\end{equation}
which is orthogonal to $\pM$ for $\nv$-a.e.\  $x \in \pM$.
To this aim, we first observe that by dominated convergence we have
\begin{equation}\label{eq:divergence_boundary_2}
\begin{split}
\lim_{\rho \to 0}
\int_{G_k(\M)} \gamma\Big( \frac{d(x)}{\rho}\Big) \dive_S X(x) \dif V(x,S)
=
\int_{G_k(\pM)} \dive_S X(x) \dif V(x,S)
\end{split}
\end{equation}
We now have to compute the right-hand side of \eqref{eq:divergence_boundary_2}. To do this, we decompose $\dive_S X(x) = \dive_S X^\perp(x) + \dive_S X^T(x)$.
For the tangent part, we claim that
\begin{equation}\label{eq:dive_tangent_boundary}
\int_{G_k(\pM)} \dive_S X^T(x) \dif V(x,S)
=
0.
\end{equation}
In fact
$
\dive_S X^T(x) = 0
$
for any
$
x \in \pM,
$
and
$
\forall S \subset T_x \pM$. This is true because $X^T \equiv 0$ on $\pM$, therefore $D_\tau \scal{X(x)}{\tau}=0$ for any $\tau \in T_x \pM$. Thus \eqref{eq:dive_tangent_boundary} follows by definition of tangential divergence \eqref{eq:tangential_divergence_defi} and by Lemma \ref{lmm:constancy}.

For the orthogonal component we get
\begin{equation*}
\dive_S X^\perp(x)
=
\scal{P_S \nabla \chi}{\nabla d(x)}
+
\chi(x) \dive_S \nabla d(x).
\end{equation*}
Since $\nabla d(x)$ is orthogonal to $\pM$, by Lemma \ref{lmm:constancy} again we have $\scal{P_S\nabla \chi(x)}{ \nabla d(x)}=0$ for $V$-a.e.\ $(x,S) \in G_k(\pM)$. Hence
\begin{equation*}
\int_{G_k(\pM)} \scal{ P_S \nabla \chi(x)}{\nabla d(x)} \dif V(x,S)
=
0.
\end{equation*}
Since $\nabla d(x) = - N(x)$ for every $x \in \pM$, where $N(x)$ is the unit normal vector to $\pM$ at $x$, we obtain
\begin{equation}\label{eq:divergence_boundary_partial}
\int_{G_k(\pM)} \chi(x) \dive_S \nabla d(x) \dif V(x,S)
=
\int_{G_k(\pM)} \scal{X(x)}{N(x)} \dive_S  N(x) \dif V(x,S)
\end{equation}
Thus, combining \eqref{eq:dive_tangent_boundary}-\eqref{eq:divergence_boundary_partial}, we obtain
\begin{equation}\label{eq:divergence_boundary_total}
\int_{G_k(\pM)} \dive_S X(x) \dif V(x,S)
=
 \int_{G_k(\pM)} \scal{X(x)}{N(x)} \dive_S N(x) \dif V(x,S).
\end{equation}
We are now going to define $\tilde{H}$ and write the last integral in terms of it. To do so, by disintegration of $V$ we can write
\begin{equation*}
V = \nv \otimes \nu_x
\end{equation*}
where for $\nv$-a.e.\ $x$, $\nu_x$ is a probability measure on $G(k,n)$. Hence the right-hand side of \eqref{eq:divergence_boundary_total} can be written as
\begin{equation*}
\begin{split}
& \int_{G_k(\pM)} \scal{X(x)}{N(x)} \dive_S N(x) \dif V(x,S)
\\
& \qquad
=
 \int_{\pM} \scal{X(x)}{N(x)}
\left(
\int_{G(k,n)} \dive_S N(x) \dif \nu_x(S)
\right) \dif \nv(x).
\end{split}
\end{equation*}
If we define
\begin{equation}\label{eq:def_Htilde}
\tilde{H}(x)
:=
- N(x) \int_{G(k,n)} \dive_S N(x) \dif \nu_x(S)
\qquad
\mbox{ for }
\nv
\mbox{-a.e. }
x \in \pM,
\end{equation}
we can write \eqref{eq:divergence_boundary_total} as
\begin{equation}\label{eq:divergence_boundary_final_curvature}
\begin{split}
\int_{G_k(\pM)} \dive_S X(x) \dif V(x,S)
= 
- \int_{\pM} \scal{\tilde{H}(x)}{X(x)} \dif \nv(x).
\end{split}
\end{equation}
Loosely speaking, $\tilde{H}(x)$ can be interpreted as the ``mean curvature of $\pM$ weighted according to the planes charged by $V$ at $x$"; in fact, in co-dimension 1, $\tilde{H}$ turns out to be precisely the mean curvature of $\pM$ (see Corollary \ref{cor:positive_fv_codimension1}). By its definition, it is clear that $\tilde{H}$ is orthogonal to $\pM$ for $\nv$-a.e.\ $x \in \pM$, that $\tilde{H} \in L^\infty(\pM,\nv)$ and that $\norm{\tilde{H}}_{L^\infty(\pM,\nv)}$ depends only on the second fundamental form of $\pM$.
Gathering \eqref{eq:divergence_boundary_2} and \eqref{eq:divergence_boundary_final_curvature} we finally get \eqref{eq:divergence_boundary_part1}.

\item We now have to study the second term in the right-hand side of \eqref{eq:divergence_at_boundary}
Roughly speaking, we can see it as ``the mean orthogonal part to $\pM$ of $V$" on the tubular neighborhood $U_\rho(\pM)$. When $\rho \to 0$, we expect that this term takes into account the ``transversal boundary" of $V$ at $\pM$, that is the singular part of the first variation of $V$ on $\pM$.\\
More precisely, we are going to show the existence of a positive Radon measure $\muv$ (as expressed in the statement of the theorem), such that
\begin{equation}\label{eq:definition_mu_V}
\lim_{\rho \to 0}
\frac{1}{\rho}
\int \gamma'\Big( \frac{d(x)}{\rho}\Big)\scal{ P_S \nabla d(x)}{X(x)} \dif V(x,S)
=
\int_{\pM} \scal{X(x)}{N(x)} \dif \muv(x),
\end{equation}
where $N(x)$ is the exterior unit normal vector to $\pM$.
We first of all remark that the above limit exists by the existence of limits of the other two terms in \eqref{eq:divergence_at_boundary}.

To compute the limit in \eqref{eq:definition_mu_V}, we use again the decomposition $X = X^T + X^\perp = X^T + \chi \nabla d$.

As $\rho \to 0$, we expect that the contribution of $X^T$ is zero. Indeed, since $X \in \Xp$ and $X$ is of class $C^1$, there exists a constant $c>0$ such that $|X^T(x)| \leq c d(x)$. Therefore, since $\gamma'(s) \neq 0$ only for $s \in (1/2,1)$, we obtain
\begin{equation}\label{eq:limit_tangent_part_singular}
\begin{split}
\lim_{\rho \to 0} \,
&
\frac{1}{\rho}\int_{G_k(\M)} \left| \gamma'\Big( \frac{d(x)}{\rho}\Big)\scal{ P_S \nabla d(x)}{X^T(x)} \right| \dif V(x,S)
\\
& \qquad\qquad\qquad \leq
\lim_{\rho \to 0} 3 c \nv \big( U_\rho(\pM) \setminus U_{\rho/2}(\pM) \big)
=
0.
\end{split}
\end{equation}
This proves the existence of the limit for the orthogonal part
\begin{equation}\label{eq:existence_singular_variation}
\begin{split}
\lim_{\rho \to 0}
&
\frac{1}{\rho}\int_{G_k(\M)} \gamma'\Big( \frac{d(x)}{\rho}\Big) \scal{P_S \nabla d(x)}{X^\perp(x)} \dif V(x,S)
\\
& \qquad
=
\lim_{\rho \to 0}
\frac{1}{\rho}\int_{G_k(\M)} \chi(x)\gamma'\Big( \frac{d(x)}{\rho}\Big)|P_S \nabla d(x)|^2 \dif V(x,S).
\end{split}
\end{equation}
\eqref{eq:limit_tangent_part_singular} and \eqref{eq:existence_singular_variation} yield
\begin{equation}\label{eq:limit_definition_measure_boundary}
\begin{split}
& \lim_{\rho \to 0}
\int \frac{1}{\rho}\gamma'\Big( \frac{d(x)}{\rho}\Big)\scal{ P_S \nabla d(x)}{X(x)} \dif V(x,S)
\\
& \qquad
=
\lim_{\rho \to 0}
\frac{1}{\rho}\int_{G_k(\M)} \chi(x)\gamma'\Big( \frac{d(x)}{\rho}\Big)|P_S \nabla d(x)|^2 \dif V(x,S).
\end{split}
\end{equation}
We now observe that the map
\begin{equation*}
T \colon
X \in \Xp
\longmapsto
\lim_{\rho \to 0}
\frac{1}{\rho}\int_{G_k(\M)} \chi(x)\gamma'\Big( \frac{d(x)}{\rho}\Big)|P_S \nabla d(x)|^2 \dif V(x,S).
\end{equation*}
is a well-defined distribution and, by its definition, $\supp T \subseteq \pM$.
Again by definition, if $\chi(x) \leq 0$ for every $x \in \pM$ (i.e. if $X$ point outward $\pM$), then $T(X) \geq 0$. Therefore $T$ is a signed distribution and by Riesz Representation Theorem there exists a positive Radon measure $\muv$ such that
\begin{equation*}
\scal{T}{X} = \int_{\pM} \scal{X}{N} \dif \muv
\qquad
\forall X \in \Xp.
\end{equation*}
This completes the proof of \eqref{eq:definition_mu_V}.
\end{itemize}
\item
We now gather the previous computations to get \eqref{eq:normal_first_variation}.

By \eqref{eq:divergence_at_boundary}, \eqref{eq:divergence_boundary_part1} and \eqref{eq:definition_mu_V}, we can rewrite \eqref{eq:limit_boundary_part_X} as
\begin{equation}\label{eq:limit_boundary_part_final}
\begin{split}
\lim_{\rho \to 0}
\int_{G_k(\M)}  \dive_S \left[ \gamma\Big( \frac{d(x)}{\rho}\Big)X(x) \right] \dif V(x,S)
=
- \int_{\pM} \scal{\tilde{H}(x)}{X(x)} \dif \nv(x)
+
\int_{\pM} \scal{X}{N} \dif \muv.
\end{split}
\end{equation}
Going back to \eqref{eq:splitting_divergence}, by \eqref{eq:compactly_supported_part} and \eqref{eq:limit_boundary_part_final} we finally get
\begin{equation*}
\begin{split}
\int_{G_k(\M)} \dive_S X(x) \dif V(x,S)
=
- \int_\M \scal{X(x)}{H(x) + \tilde{H}(x)} \dif \nv(x)
+
\int_{\pM} \scal{X}{N} \dif \muv.
\end{split}
\end{equation*}
which completes the proof of \eqref{eq:normal_first_variation}.

\item
We are left with the proof of \eqref{eq:estimate_muv_global} and \eqref{eq:estimate_measure_boundary}. We begin with \eqref{eq:estimate_measure_boundary}. Let us fix $x_0 \in \pM$ and $r \leq R$.
Without loss of generality we can assume that $x_0=0$. To prove the estimate, we test \eqref{eq:normal_first_variation} with $X(x)=-\gamma\Big( \frac{|x|}{r}\Big) \nabla d(x)$ which clearly belongs to $\Xp$. We have
\begin{equation*}
\dive_S X(x)
=
- \gamma'\Big(\frac{|x|}{r}\Big) \frac{1}{r}
\scal{P_S \frac{x}{|x|}}{\nabla d(x)}
-
\gamma\Big( \frac{|x|}{r}\Big)
\dive_S \nabla d(x).
\end{equation*}
Therefore
\begin{equation}\label{eq:estimate_approximate_balls_b_measure}
\begin{split}
\muv \big( B_{r/2}(x) \big)
\leq &
\int_{\pM} \gamma\Big( \frac{|x|}{r}\Big) \dif \muv
\\
= &
\int_{\pM} \scal{X}{N} \dif \muv
\\
= &
\int_{G_k(\M)} \dive_S X(x) \dif V(x,S)
+
\int_\M \scal{X}{H + \tilde{H}} \dif \nv
\\
= &
\int_{G_k(\M)} \left[
- \gamma'\Big(\frac{|x|}{r}\Big) \frac{1}{r}
\scal{P_S \frac{x}{|x|}}{\nabla d(x)}
-
\gamma\Big( \frac{|x|}{r}\Big)
\dive_S \nabla d(x)
\right]
\dif V(x,S)
\\
& -
\int_\M \gamma\Big( \frac{|x|}{r}\Big) \scal{\nabla d(x)}{H(x) + \tilde{H}(x)} \dif \nv(x)
\end{split}
\end{equation}
We want to estimate the last member of the above inequality. To do so, we choose a constant $c=c(\pM,R)$ such that
\begin{equation*}
|\dive_S \nabla d(x)| \leq c
\qquad
\forall x \in U_R(\pM).
\end{equation*}
By \eqref{eq:def_Htilde}, the choice of $c$ provides also $|\tilde{H}(x)| \leq c$ for $\nv$-a.e.\ $x \in \pM$. Thus
\begin{equation*}
\begin{split}
-
\int_{G_k(\M)}
\gamma\Big( \frac{|x|}{r}\Big)
\dive_S \nabla d(x)
\dif V(x,S)
- &
\int_\M \gamma\Big( \frac{|x|}{r}\Big) \scal{\nabla d(x)}{H(x) + \tilde{H}(x)} \dif \nv(x)
\\
\leq &
\int_{B_r(x)} \big( c +|H(x)| \big) \dif \nv(x).
\end{split}
\end{equation*}
Substituting in \eqref{eq:estimate_approximate_balls_b_measure} and since $\gamma$ can be taken so that $\normi{\gamma'} \approx 2$, we get
\begin{equation*}
\begin{split}
\muv \big( B_{r/2}(x) \big)
\leq
\frac{c}{r} \nv \big( B_r(x) \big)
+
\int_{B_r(x)} |H(x)| \dif \nv(x).
\end{split}
\end{equation*}

The proof of \eqref{eq:estimate_muv_global} is similar to the previous one and is in fact easier: it is enough to take a vector field $X \in \Xm$ such that $X(x)= \gamma\Big( \frac{d(x)}{\rho}\Big) \nabla d(x)$ for some $\rho$ sufficiently small, so that $X=N$ on $\pM$ and use \eqref{eq:normal_first_variation}.
\end{steps}
\end{proof}

\section{Consequences of Theorem \ref{thm:positivity_singular_first_variation}}
\label{sec:consequences_bv}
In this section we clarify some consequences of Theorem \ref{thm:positivity_singular_first_variation}.
\begin{itemize}
\item In subsection \ref{subsec:bv_vanishing} we extend Theorem \ref{thm:positivity_singular_first_variation} to the case of varifolds with bounded first variation with respect to $\Xo$;
\item
In subsection \ref{subsec:codimension1} we study the codimension $1$ case: we prove that if $k=n-1$, then the vector field $\tilde{H}$ given by Theorem \ref{thm:positivity_singular_first_variation} is the mean curvature vector of $\pM$ (Corollary \ref{cor:positive_fv_codimension1}); next we state a refined version of Lemma \ref{lmm:constancy} for varifolds with free boundary: if $k=n-1$, then the restriction of $V$ to $\pM$ is $(n-1)$-rectifiable (Corollary \ref{cor:constancy_free_boundary});
\item In subsection \ref{subsec:compact_supp}
we extend Theorem \ref{thm:positivity_singular_first_variation} to varifolds with generalized mean curvature with respect to $\Xc$, i.e. vector fields compactly supported in the interior of $\M$ (see \eqref{eq:vector_fields}), assuming that $\nv(\pM)=0$ (Corollary \ref{cor:positivity_compact}); As a consequence, such varifolds have generalized mean curvature with respect to the larger class of vector fields $\Xo$;
\item In subsection \ref{subsec:free_boundary} we show that varifolds with free boundary have bounded first variation (Corollary \ref{cor:bv_free_boundary_varifolds});
\item In subsection \ref{subsec:monotonicity_formulae} we prove a monotonicity inequality (Lemma \ref{lmm:monotonicity_inequality_s}) for points on $\pM$; we next use the inequality to obtain monotonicity formulae for points on $\pM$ (Corollaries \ref{cor:monotonicity_formula} and \ref{cor:monotonicity_formula_infty}) without the reflections used by Gr\"uter and Jost in \cite{Gruter:allardtype}.
\item In subsection \ref{sec:proof_regular_set} we use the monotonicity inequality at the boundary to prove Theorem \ref{thm:singular_set_V_lower_dimensional}.
\end{itemize}

\subsection{Varifolds with bounded variation with respect to $\Xo$}\label{subsec:bv_vanishing}
The fact that the first variation of $V$ with respect to $\Xo$ is absolutely continuous with respect to $\nv$ is not essential to prove Theorem \ref{thm:positivity_singular_first_variation}. In fact the following slight modification holds true.
\begin{theorem}\label{thm:bv_vanishing_bv}
Let $V \in \V_k(\M)$ be a $k$-varifold with bounded first variation $\delta_0 V$ with respect to $\Xo$. Then there exists a positive Radon measure $\muv$ on $\pM$ and a $\nv$-measurable vector field $\tilde{H}$ on $\pM$ such that, for any $X \in \Xp$, it holds
\begin{equation}\label{eq:normal_first_variation_bv}
\begin{split}
\int_{G_k(\M)} \dive_S X(x) \dif V(x,S)
=
\int_\M \scal{X}{\zeta} \dif |\delta_0 V|
- \int_\M \scal{X}{\tilde{H}} \dif \nv
+ \int_{\pM} \scal{X}{N} \dif \muv
\end{split}
\end{equation}
where
$\tilde{H}$ is orthogonal to $\pM$ for $\nv$-a.e.\ $x \in\pM$, $\tilde{H} \in L^\infty(\pM, \nv)$ and $\normi{\tilde{H}}$ depends on the second fundamental form of $\pM$ and  $\zeta$ is the polar vector of $\delta_0 V$ with respect to $|\delta_0 V|$.
In particular, $V$ has bounded first variation with respect to $\Xp$. Moreover, the following estimates hold:
\begin{equation}\label{eq:estimate_muv_global_bv}
\muv(\pM)
\leq
c \nv(\M) + |\delta_0 V|(\M);
\end{equation}
\begin{equation}\label{eq:estimate_measure_boundary_bv}
\muv \big(B_{r/2}(x_0)\big)
\leq
\frac{c}{r} \nv \big( B_r(x_0) \big)
+
|\delta_0 V|\big(B_r(x_0) \big)
\qquad
\forall x_0 \in \pM,
\,
\forall r \leq R(\M)
\end{equation}
where $R(\M)$ is such that the distance function from $\pM$ is of class $C^2$ in $U_R(\pM)$ and the constant $c=c(\M)$ depends on the second fundamental form of $\pM$.
\end{theorem}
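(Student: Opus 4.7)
The plan is to follow the proof of Theorem \ref{thm:positivity_singular_first_variation} almost verbatim, with only one substantive change: in the ``interior part'' of the cut-off decomposition we use the Radon measure $\delta_0 V = \zeta\,|\delta_0 V|$ in place of the absolutely continuous measure $-H\,\nv$. All the analysis on the ``boundary part'' (producing $\tilde H$ and $\muv$) is insensitive to this substitution, since it relies only on Lemma \ref{lmm:constancy}, which requires exactly the hypothesis we are now assuming (bounded first variation with respect to $\Xo$).

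Concretely, I would fix $R>0$ such that $d \in C^2(\clos{U_R(\pM)})$ and, for $X \in \Xp$ and $\rho < R$, split
\[
X = \Big(1-\gamma\big(\tfrac{d}{\rho}\big)\Big)X + \gamma\big(\tfrac{d}{\rho}\big)X.
\]
For the first summand I observe that $(1-\gamma(d/\rho))X \in \Xc \subset \Xo$, so that bounded first variation with respect to $\Xo$ gives
\[
\int_{G_k(\M)} \dive_S\!\Big[\big(1-\gamma(\tfrac{d}{\rho})\big)X\Big]\dif V(x,S)
= \int_\M \Big(1-\gamma(\tfrac{d}{\rho})\Big)\scal{X}{\zeta}\dif|\delta_0 V|.
\]
Sending $\rho \to 0$ and using dominated convergence, together with the convention from Remark \ref{rem:generalized_curvature_vanishes} that $\zeta=0$ on $\pM$, this limit equals $\int_\M \scal{X}{\zeta}\dif|\delta_0 V|$. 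This replaces equation \eqref{eq:compactly_supported_part} of the original proof.

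For the second summand $\gamma(d/\rho)X$, the analysis is word-for-word identical to Steps 3 of the original proof. I expand the divergence as in \eqref{eq:divergence_at_boundary}; the first term yields, by dominated convergence, $\int_{G_k(\pM)} \dive_S X\,\dif V(x,S)$, which is then rewritten through the decomposition $X = X^T + \chi\,\nabla d$ combined with Lemma \ref{lmm:constancy} (whose hypotheses are precisely what we assume here) and the disintegration $V = \nv \otimes \nu_x$, producing the $\tilde H$-term with $\tilde H$ defined by \eqref{eq:def_Htilde}; in particular $\tilde H$ is orthogonal to $\pM$, lies in $L^\infty(\pM,\nv)$, and its $L^\infty$-norm is controlled by the second fundamental form of $\pM$. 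The second term produces the boundary measure $\muv$: the tangential component $X^T$ contributes zero as in \eqref{eq:limit_tangent_part_singular} (by $|X^T(x)|\le c\, d(x)$), and the remaining distribution $X \mapsto \lim_{\rho \to 0} \rho^{-1} \int \chi(x)\gamma'(d/\rho)|P_S\nabla d|^2 \dif V(x,S)$ is nonnegative on vector fields pointing outward and thus represented, via Riesz, by a positive Radon measure $\muv$ on $\pM$. Combining the two limits gives \eqref{eq:normal_first_variation_bv}.

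For the estimates, I test \eqref{eq:normal_first_variation_bv} with $X(x) = -\gamma(|x-x_0|/r)\nabla d(x) \in \Xp$ (which equals $N$ on $\pM$) to get
\[
\muv\!\big(B_{r/2}(x_0)\big) \le \int_{\pM}\gamma\!\Big(\tfrac{|x-x_0|}{r}\Big)\dif\muv
\]
and bound the right-hand side by the three terms appearing in \eqref{eq:normal_first_variation_bv}: the divergence term by $\frac{c}{r}\nv(B_r(x_0))$ as in \eqref{eq:estimate_approximate_balls_b_measure}, the $\tilde H$-term by $c\,\nv(B_r(x_0))$ (since $\|\tilde H\|_\infty \le c(\M)$), and the new term by $|\delta_0 V|(B_r(x_0))$ (since $|X|\le 1$); this yields \eqref{eq:estimate_measure_boundary_bv}. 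The global bound \eqref{eq:estimate_muv_global_bv} follows by the same test with $X = \gamma(d/\rho)\nabla d$ for $\rho$ small, exactly as in the original argument. The only conceptual step to verify is that Lemma \ref{lmm:constancy} still applies directly to our $V$, which it does because its hypothesis is precisely bounded first variation with respect to $\X_0(\pM)$; I do not foresee any further obstacle.
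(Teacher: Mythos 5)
Your proposal is correct and is essentially the paper's own proof: the paper likewise observes that the only place where absolute continuity of $\delta_0 V$ with respect to $\nv$ is used is in the interior term \eqref{eq:compactly_supported_part}, replaces it by $\lim_{\rho\to 0}\int_\M (1-\gamma(d/\rho))\scal{X}{\zeta}\dif|\delta_0 V| = \int_\M \scal{X}{\zeta}\dif|\delta_0 V|$ using the convention that $\zeta$ vanishes $|\delta_0V|$-a.e.\ on $\pM$, and notes that the boundary analysis and the estimates carry over with the obvious modifications. Your sign for the $\zeta$-term is the one consistent with \eqref{eq:normal_first_variation_bv}, and your verification that Lemma \ref{lmm:constancy} applies under the weaker hypothesis is exactly the point that makes the transfer work.
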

\begin{proof}
The proof follows the one of Theorem \ref{thm:positivity_singular_first_variation}. The only part of the proof of \eqref{eq:normal_first_variation} where we use the hypothesis $\delta_0 V \ll \nv$ is in \eqref{eq:compactly_supported_part}. If $V$ has bounded first variation $\delta_0 V$ with respect to $\Xo$, then one easily obtain
\begin{equation}\label{eq:compactly_supported_part_bv}
\begin{split}
& \lim_{\rho \to 0}
\int_{G_k(\M)} \dive_S \left[ \left( 1- \gamma\Big( \frac{d(x)}{\rho}\Big)\right) X(x) \right]\dif V(x,S)
\\
& \qquad =
\lim_{\rho \to 0} \int_{\M} \left( 1- \gamma\Big( \frac{d(x)}{\rho}\Big)\right) \scal{X(x)}{\zeta(x)} \dif |\delta_0 V|(x)
\\
& \qquad =
- \int_{\interior{\M}} \scal{X(x)}{\zeta(x)} \dif |\delta_0 V|(x)
\\
& \qquad =
- \int_\M \scal{X(x)}{\zeta(x)} \dif |\delta_0 V|(x)
\end{split}
\end{equation}
where $\zeta$ is the polar vector of $\delta_0 V$ with respect to $|\delta_0 V|$ and the last equality is due to the assumption $|\delta_0 V|(\pM)=0$
(see Remark \ref{rem:generalized_curvature_vanishes}).

The modifications to the proofs of \eqref{eq:estimate_muv_global} and \eqref{eq:estimate_measure_boundary} to obtain \eqref{eq:estimate_muv_global_bv} and \eqref{eq:estimate_measure_boundary_bv} are obvious.
\end{proof}

\subsection{The codimension 1 case: $k=n-1$}\label{subsec:codimension1}
If $k=n-1$, we can characterize $\tilde{H}$ in a simpler way: if $x \in \pM$, then $ \tilde{H}(x)$ is the mean curvature vector of $\pM$.
\begin{corollary}\label{cor:positive_fv_codimension1}
Let $V \in \V_{n-1}(\M)$ have generalized mean curvature $H$ with respect to $\Xo$ with $H \in L^1(\M,\nv)$.  Then there exists a positive Radon measure $\muv$ on $\pM$
such that
\begin{equation*}
\begin{split}
\int_{G_{n-1}(\M)} \dive_S X(x) \dif V(x,S)
=
- \int_\M \scal{X}{H + \tilde{H}} \dif \nv
+ \int_{\pM} \scal{X}{N} \dif \muv
\qquad
\forall X \in \Xp
\end{split}
\end{equation*}
where $\tilde{H}$ is the mean curvature vector of $\pM$, that is $\tilde{H}(x)
:=
- N(x) \big(\dive_{T_x \pM} N(x) \big)$ for $x \in \pM$.
Moreover, \eqref{eq:estimate_measure_boundary} holds true.
\end{corollary}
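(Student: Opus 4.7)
The plan is to apply Theorem \ref{thm:positivity_singular_first_variation} directly and then simplify the expression \eqref{eq:def_Htilde} for $\tilde{H}$ using the fact that we are in codimension $1$. Since the hypotheses of Theorem \ref{thm:positivity_singular_first_variation} are satisfied, the formula \eqref{eq:normal_first_variation} immediately gives the first variation identity with some $\tilde H$ and $\muv$ having the claimed properties, and the local bound \eqref{eq:estimate_measure_boundary} is inherited from the same theorem. The only genuine content of the corollary is therefore the identification of $\tilde{H}$.

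To do this, I would recall that Theorem \ref{thm:positivity_singular_first_variation} defines
\[
\tilde{H}(x) = - N(x) \int_{G(k,n)} \dive_S N(x) \dif \nu_x(S),
\]
where $V = \nv \otimes \nu_x$ is the disintegration of $V$. Since $V$ has generalized mean curvature with respect to $\Xo$, Lemma \ref{lmm:constancy} applied to $\Gamma = \pM$ shows that
\[
V\big(\{(x,S) \in G_{n-1}(\R^n) \mid x \in \pM,\ S \not\subset T_x \pM\}\big) = 0.
\]
In codimension $1$ we have $\dim S = n-1 = \dim T_x \pM$, so the inclusion $S \subset T_x \pM$ forces $S = T_x \pM$. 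Consequently, for $\nv$-a.e.\ $x \in \pM$, the probability measure $\nu_x$ on $G(n-1,n)$ is concentrated on the single plane $T_x \pM$, i.e.\ $\nu_x = \delta_{T_x \pM}$.

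Substituting this into the formula for $\tilde{H}$ yields, for $\nv$-a.e.\ $x \in \pM$,
\[
\tilde{H}(x) = - N(x) \dive_{T_x \pM} N(x),
\]
which is precisely the mean curvature vector of $\pM$ at $x$ (this is a genuine pointwise quantity on $\pM$, so it can be evaluated at every point of $\pM$ regardless of $V$). The only step requiring a little care is justifying the disintegration pointwise and the application of Lemma \ref{lmm:constancy}, which applies since a varifold with generalized mean curvature with respect to $\Xo$ in particular has bounded first variation with respect to $\X_0(\pM)$. No other obstacle is expected: the statement is essentially an unpacking of \eqref{eq:def_Htilde} in the codimensional one case, and the estimate \eqref{eq:estimate_measure_boundary} is quoted verbatim from Theorem \ref{thm:positivity_singular_first_variation}.
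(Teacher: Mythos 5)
Your proposal is correct and follows essentially the same route as the paper: both invoke Lemma \ref{lmm:constancy} together with the observation that an $(n-1)$-plane contained in $T_x\pM$ must equal $T_x\pM$, so that $\nu_x=\delta_{T_x\pM}$ for $\nv$-a.e.\ $x\in\pM$ and \eqref{eq:def_Htilde} collapses to $\tilde H(x)=-N(x)\dive_{T_x\pM}N(x)$. The paper phrases this by rewriting \eqref{eq:divergence_boundary_total} rather than by substituting into the disintegration formula, but the content is identical.
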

\begin{proof}
If $S$ is an $(n-1)$-dimensional subspace of $\R^n$, then $S \subset T_x \pM$ if and only if $S=T_x \pM$. Therefore, if $k=n-1$ and $V \in \V_{n-1}(\M)$ with generalized mean curvature with respect to $\Xo$, Lemma \ref{lmm:constancy} yields
\begin{equation}\label{eq:constancy_codimension_1}
V \left(
\{
(x,S) \in G_{n-1}(\pM) \mid S \neq T_x \Gamma
\}
\right)
=0.
\end{equation}
Hence, for $X \in \Xp$,
\begin{equation*}
\int_{G_{n-1}(\pM)} X(x) \dive_S N(x) \dif V(x,S)
=
\int_{\pM} X(x) \dive_{T_x \pM} N(x) \dif \nv,
\end{equation*}
thus \eqref{eq:divergence_boundary_total} becomes
\begin{equation*}
\int_{G_{n-1}(\pM)} \dive_S X(x) \dif V(x,S)
=
\int_{\pM} \scal{N(x)}{X(x)}\dive_{T_x \pM} N(x) \dif \nv(x).
\end{equation*}
Thus, defining
\begin{equation*}
\tilde{H}(x)
:=
- N(x) \dive_{T_x \pM} N(x),
\end{equation*}
we get \eqref{eq:divergence_boundary_final_curvature}. 
\end{proof}

If $k = n-1$ and $V$ has bounded first variation with respect to $\Xt$ instead of $\Xo$, we can strengthen the conclusion of Lemma \ref{lmm:constancy}: $V \llcorner G_{n-1}(\pM)$ is $(n-1)$-rectifiable.
\begin{corollary}\label{cor:constancy_free_boundary}
Let $V \in \V_{n-1}(\M)$ with bounded first variation with respect to $\Xt$. Then $V \llcorner G_{n-1}(\pM)$ is an $(n-1)$-rectifiable varifold. More precisely, if $\varphi \in C_c\big( G_{n-1}(\M) \big)$, then
\begin{equation*}
\int_{G_{n-1}(\M)} \varphi(x,S) \dif V(x,S)
=
\int_{\pM} \varphi(x,T_x \pM) \theta(x) \dif \haus{n-1}(x)
+
\int_{G_{n-1}(\interior{\M})} \varphi(x,S) \dif V(x,S),
\end{equation*}
where $\theta(x)= (\omega_{n-1})^{-1}\T^{n-1}(\nv,x)$ for $\haus{n-1}$-a.e.\ $x \in \pM$.
\end{corollary}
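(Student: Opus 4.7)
The plan is to combine Lemma~\ref{lmm:constancy} with an analysis of $V\llcorner G_{n-1}(\pM)$ as a top-dimensional varifold on the smooth $(n-1)$-manifold $\pM$.

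First, I would note that $\Xo\subset\Xt$, so the hypothesis of bounded first variation with respect to $\Xt$ implies bounded first variation with respect to $\Xo$, and Lemma~\ref{lmm:constancy} applies with $\Gamma=\pM$. Since in codimension $1$ the only $(n-1)$-plane contained in $T_x\pM$ is $T_x\pM$ itself, $V$-a.e.\ $(x,S)\in G_{n-1}(\pM)$ must satisfy $S=T_x\pM$. Disintegration then yields
\[
V\llcorner G_{n-1}(\pM)=(\nv\llcorner\pM)\otimes\delta_{T_x\pM},
\]
which gives the splitting of the integral in the statement up to identifying $\nv\llcorner\pM$ with $\theta\,\haus{n-1}\llcorner\pM$.

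The core step is to show $\nv\llcorner\pM\ll\haus{n-1}\llcorner\pM$. I would work in local $C^2$-charts straightening $\pM$ to a piece of $\R^{n-1}\times\{0\}$ and prove that the pushforward $\mu$ of $\nv\llcorner\pM$ is a $BV_{\loc}$ measure on $\R^{n-1}$. Given a compactly supported $Y\in C^1(\R^{n-1};\R^{n-1})$, lift it through the chart and extend it into $\M$ using a normal cut-off $\gamma(d/\epsilon)$, producing $X\in\Xt$. Splitting $\delta V(X)$ into boundary and interior contributions via the structural identity above, the boundary contribution reduces (after sending $\epsilon\to 0$) to $\int_{\R^{n-1}}\dive Y\,\dif\mu$ up to chart-Jacobian factors, while the interior contribution must be controlled uniformly in $\epsilon$ by $\|Y\|_\infty$. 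This last point, which I expect to be the main obstacle, uses that $V$ also has bounded first variation on $\interior\M$ (since $\Xc\subset\Xt$) together with a careful balance between $\epsilon$ and the mass of $V$ in the tube $U_\epsilon(\pM)$. Once the estimate $|\int\dive Y\,\dif\mu|\le C\|Y\|_\infty$ is in place, Riesz's representation theorem presents $\nabla\mu$ as a Radon measure, so $\mu\in BV_{\loc}$ and in particular $\mu\ll\mathcal L^{n-1}$. Transferring back through the chart gives $\nv\llcorner\pM\ll\haus{n-1}\llcorner\pM$.

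Finally, writing $\nv\llcorner\pM=\theta\,\haus{n-1}\llcorner\pM$, the identification $\theta(x)=\omega_{n-1}^{-1}\T^{n-1}(\nv,x)$ follows from the Besicovitch--Lebesgue differentiation theorem: at $\haus{n-1}$-a.e.\ $x\in\pM$,
\[
\theta(x)=\lim_{r\to 0}\frac{\nv(\pM\cap B_r(x))}{\haus{n-1}(\pM\cap B_r(x))}=\omega_{n-1}^{-1}\T^{n-1}(\nv,x),
\]
using the $C^2$-regularity of $\pM$ (so that $\haus{n-1}(\pM\cap B_r(x))/r^{n-1}\to\omega_{n-1}$) and that the interior mass $\nv(B_r(x)\cap\interior\M)$ is $o(r^{n-1})$ at Lebesgue points of the decomposition $\nv=\nv\llcorner\pM+\nv\llcorner\interior\M$. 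Notably, this plan avoids any use of monotonicity formulae, which are only established later in the paper.
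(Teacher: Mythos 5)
Your first step (Lemma~\ref{lmm:constancy} in codimension one forces $S=T_x\pM$ for $V$-a.e.\ $(x,S)\in G_{n-1}(\pM)$, hence the disintegration $V\llcorner G_{n-1}(\pM)=(\nv\llcorner\pM)\otimes\delta_{T_x\pM}$) is exactly the paper's, and is fine. The problem is the middle step. You reduce everything to showing that the tangential distributional gradient of $\nv\llcorner\pM$ is a Radon measure, and you yourself flag the decisive estimate as ``the main obstacle'' without supplying it. Concretely: to localize $\delta V(X)$ to the boundary you must cut off with $\gamma(d/\epsilon)$, and the cut-off produces the term $\epsilon^{-1}\int \gamma'(d/\epsilon)\scal{P_S\nabla d}{X}\dif V$ over $\interior{\M}$, which is only bounded by $\norm{Y}_\infty\,\epsilon^{-1}\nv\big(U_\epsilon(\pM)\cap\interior{\M}\big)$. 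Nothing in the hypothesis (bounded first variation with respect to $\Xt$) controls the ratio $\epsilon^{-1}\nv\big(U_\epsilon(\pM)\cap\interior{\M}\big)$ as $\epsilon\to 0$, and without the cut-off the interior contribution is controlled by $\norm{DY}_\infty$ rather than $\norm{Y}_\infty$, so the $BV$ estimate $|\int\dive Y\,\dif\mu|\le C\norm{Y}_\infty$ does not follow. As written, the central claim $\nv\llcorner\pM\ll\haus{n-1}\llcorner\pM$ is therefore not proved.

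The gap is also unnecessary: you are aiming at a much stronger conclusion ($BV$ regularity of the boundary density) than what is needed (absolute continuity), and your reason for avoiding monotonicity is based on a misreading. The monotonicity developed later in the paper is the \emph{boundary} monotonicity for free-boundary varifolds; what the paper actually invokes here is the classical \emph{interior} monotonicity for varifolds with bounded full first variation, i.e.\ \cite[Lemma 40.5]{simon:lectures}, which is an external standard fact. Since $\Xo\subset\Xt$, Theorem~\ref{thm:bv_vanishing_bv} upgrades bounded first variation with respect to $\Xt$ to bounded first variation with respect to all of $\Xm$; then $\T^{n-1}(\nv,x)$ exists and is finite for $\nv$-a.e.\ $x$, in particular $\nv$-a.e.\ on $\pM$, and the differentiation theorem \cite[Theorem 4.7]{simon:lectures} (equivalently, the covering-type bound $\mu(A)\le 2^{n-1}\lambda\,\haus{n-1}(A)$ on $\{\tuu(\mu,\cdot)\le\lambda\}$) immediately gives $\nv\llcorner\pM\ll\haus{n-1}\llcorner\pM$ together with the identification of $\theta$. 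I would replace your $BV$ argument with this; your final differentiation step can then be kept essentially as is.
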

\begin{proof}
By Lemma \ref{lmm:constancy} and in particular by \eqref{eq:constancy_codimension_1}, for any $\varphi \in C_c\big( G_{n-1}(\M) \big)$ we have
\begin{equation*}
\int_{G_{n-1}(\M)} \varphi(x,S) \dif V(x,S)
=
\int_{\pM} \varphi(x,T_x \pM) \dif \nv(x)
+
\int_{G_{n-1}(\interior{\M})} \varphi(x,S) \dif V(x,S).
\end{equation*}
By \cite[Lemma 40.5]{simon:lectures}), for $\nv$-a.e.\ $x \in \pM$ there exists the density $\T^{n-1}(\nv,x) < \infty$. Hence, since $\pM$ is of class $C^2$, the quantity
\begin{equation*}
\begin{split}
\theta(x)
:=
\lim_{\rho \to 0} \frac{\nv \big(B_\rho(x) \big)}{\haus{n-1}\llcorner \pM \big( B_\rho(x)\big)}
=
\frac{\T^{n-1}(\nv,x)}{\omega_{n-1}}
\end{split}
\end{equation*}
exists and is finite for $\nv$-a.e.\ $x \in \pM$.
By Radon-Nikodym Theorem \cite[Theorem 4.7]{simon:lectures}, since the singular set
$
\{
x \in \M \mid  \theta(x)= +\infty
\}
$ of $\nv \llcorner \pM$ is $\nv$-negligible, we have $\nv \llcorner \pM \ll \haus{n-1}\llcorner \pM$ and the conclusion follows.
\end{proof}

\subsection{Varifolds with mean curvature with respect to $\Xc$}\label{subsec:compact_supp}
The analogous of Theorem \ref{thm:positivity_singular_first_variation} holds, adding the extra hypothesis $\nv(\pM)=0$, even if $V$ has generalized mean curvature with respect to $\Xc$, i.e. the vector fields with compact support in the interior of $\M$. In fact, if we analyze the proof of Theorem \ref{thm:positivity_singular_first_variation}, we can see that the only point where we used the existence of generalized mean curvature with respect $\Xo$ is to obtain \eqref{eq:divergence_boundary_total}, that is to obtain the identity
\begin{equation*}
\int_{G_k(\pM)} \dive_S X(x) \dif V(x,S)
=
- \int_{G_k(\pM)} \scal{X(x)}{N(x)} \dive_S N(x) \dif V(x,S)
\end{equation*}
by the use of Lemma \ref{lmm:constancy}, whereas if $\nv(\pM)=0$ then obviously we have
\begin{equation*}
\int_{G_k(\pM)} \dive_S X(x) \dif V(x,S)
=
0.
\end{equation*}
Since the remaining arguments remain valid also if $V$ has mean curvature with respect to $\Xc$, we have proved the following corollary.
\begin{corollary}\label{cor:positivity_compact}
Let $V \in \V_k(\M)$ with generalized mean curvature $H$ with respect to $\Xc$ with $H \in L^1(\M,\nv)$ and $\nv(\pM)=0$. Then there exists a positive Radon measure $\muv$ on $\pM$ such that
\begin{equation}\label{eq:normal_compact}
\int_{G_k(\M)} \dive_S X(x) \dif V(x,S)
=
- \int_\M \scal{X}{H} \dif \nv
+ \int_{\pM} \scal{X}{N} \dif \muv
\qquad
\forall X \in \Xp
\end{equation}
In particular, $V$ has bounded first variation with respect to $\Xp$ and the estimates \eqref{eq:estimate_muv_global}, \eqref{eq:estimate_measure_boundary} on $\muv$ hold true.
\end{corollary}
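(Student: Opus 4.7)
The plan is to re-run the proof of Theorem \ref{thm:positivity_singular_first_variation} essentially unchanged, noting that only one of its steps makes essential use of the mean-curvature hypothesis with respect to $\Xo$ (via Lemma \ref{lmm:constancy}), and that this step becomes trivial once we assume $\nv(\pM)=0$.

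First I would fix $X \in \Xp$ and split it as in \eqref{eq:splitting_divergence} via the cut-off $\gamma(d(x)/\rho)$. For the interior piece $(1-\gamma(d/\rho))X$, which lies in $\Xc$, the hypothesis that $V$ has generalized mean curvature $H$ with respect to $\Xc$ is exactly what is needed to apply \eqref{eq:generalized_mean_curvature}; dominated convergence together with the assumption $\nv(\pM)=0$ gives the limit $-\int_{\M}\langle X,H\rangle\,\dif\nv$, as in \eqref{eq:compactly_supported_part}.

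Next I would handle the boundary piece $\gamma(d/\rho)X$, expanded as in \eqref{eq:divergence_at_boundary}. The summand involving $\gamma'(d/\rho)$ is treated verbatim as in the original argument: the tangential component $X^T$ contributes zero because $|X^T(x)|\le c\,d(x)$ and $\gamma'(d/\rho)$ is supported on $U_\rho(\pM)\setminus U_{\rho/2}(\pM)$, while the orthogonal component defines a distribution $T$ supported on $\pM$ which is nonnegative on inward-pointing fields; Riesz representation then produces the measure $\muv$ with $T(X)=\int_{\pM}\langle X,N\rangle\,\dif\muv$. The other summand, $\int \gamma(d/\rho)\dive_S X\,\dif V$, is precisely the point where the original proof used Lemma \ref{lmm:constancy} to pass to \eqref{eq:divergence_boundary_total} and extract $\tilde H$; here instead dominated convergence reduces the limit to $\int_{G_k(\pM)}\dive_S X\,\dif V$, and this integral vanishes trivially because $V(G_k(\pM))=\nv(\pM)=0$. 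Consequently no $\tilde H$ term appears, and combining everything yields \eqref{eq:normal_compact}.

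For the estimates \eqref{eq:estimate_muv_global} and \eqref{eq:estimate_measure_boundary} I would test \eqref{eq:normal_compact} against the same vector fields as in the last step of the proof of Theorem \ref{thm:positivity_singular_first_variation}, namely $X=-\gamma(|x-x_0|/r)\nabla d$ for the local bound and $X=\gamma(d/\rho)\nabla d$ with $\rho$ small for the global one; the absence of $\tilde H$ only simplifies the bookkeeping, while the uniform estimate $|\dive_S\nabla d|\le c$ on $U_R(\pM)$ still supplies the constant $c(\M)$. The only real obstacle is thus psychological: one must verify that Lemma \ref{lmm:constancy} was used nowhere else in the proof of Theorem \ref{thm:positivity_singular_first_variation}, so that replacing its conclusion by the tautology $V(G_k(\pM))=0$ genuinely suffices. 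A careful re-reading of the four steps confirms this, and \eqref{eq:normal_compact} is therefore a corollary of the proof, and not merely of the statement, of Theorem \ref{thm:positivity_singular_first_variation}.
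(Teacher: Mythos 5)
Your proposal is correct and follows exactly the paper's own argument: the paper proves this corollary precisely by observing that the only place the $\Xo$-hypothesis (through Lemma \ref{lmm:constancy}) enters the proof of Theorem \ref{thm:positivity_singular_first_variation} is in evaluating $\int_{G_k(\pM)}\dive_S X\,\dif V$, which vanishes trivially when $\nv(\pM)=0$, so no $\tilde H$ term arises and the rest of the argument, including the derivation of $\muv$ and the estimates, goes through unchanged.
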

\begin{rem}
If we remove the hypothesis $\nv(\pM)=0$ nothing can be said about the behavior of $V$ on $\pM$, because any vector field in $\Xc$ has first derivatives compactly supported in the interior of $\M$. So we have a lack of test vector fields to establish any property of $V$ on $\pM$: e.g. take a smooth surface in $\interior{\M}$ and add any varifold $W \in \V_k(\pM)$ with unbounded first variation with respect $\Xp$.
\end{rem}
Since $\Xo \subset \Xp$, the following result follows.
\begin{corollary}
Let $V \in \V_k(\M)$ with generalized mean curvature $H$ with respect to $\Xc$, with $H \in L^1(\M,\nv)$ and $\nv(\pM)=0$. Then $V$ has generalized mean curvature $H$ with respect to $\Xo$.
\end{corollary}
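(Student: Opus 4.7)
The plan is to deduce this corollary directly from Corollary \ref{cor:positivity_compact} by observing the elementary inclusion $\Xo \subset \Xp$. Indeed, if $X \in \Xo$, then $X(x)=0$ for every $x \in \pM$, and the zero vector trivially lies in $(T_x\pM)^\perp$; hence such an $X$ satisfies the defining condition of $\Xp$ in \eqref{eq:vector_fields}. Since the hypotheses of Corollary \ref{cor:positivity_compact} coincide with those of the present statement, the identity \eqref{eq:normal_compact} is at our disposal.

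I would then test \eqref{eq:normal_compact} against an arbitrary $X \in \Xo$. Because $X$ vanishes identically on $\pM$, the boundary integral $\int_{\pM} \scal{X}{N} \dif \muv$ is zero, and the right-hand side collapses to $-\int_\M \scal{X}{H} \dif \nv$. This is exactly the integral identity \eqref{eq:generalized_mean_curvature} required for $V$ to have generalized mean curvature $H$ with respect to $\Xo$; the extra a.e.\ vanishing condition on $H$ on $\pM$ demanded by Definition \ref{def:varifold_generalized_curvature} is part of the hypothesis on $H$ inherited from the $\Xc$ setting.

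There is no substantive obstacle here: the corollary is a tautological consequence of Corollary \ref{cor:positivity_compact} combined with the set inclusion $\Xo \subset \Xp$ and the vanishing of $X$ on $\pM$. The only thing worth double-checking is that the vector field $H$ furnished by Corollary \ref{cor:positivity_compact} is the \emph{same} $H$ that appears in the hypothesis (not modified on $\pM$); this is indeed the case because \eqref{eq:normal_compact} is stated with the original $H$, and the ambiguity discussed in Remark \ref{rem:generalized_curvature_vanishes} is avoided by the standing convention $H=0$ on $\pM$ built into Definition \ref{def:varifold_generalized_curvature}.
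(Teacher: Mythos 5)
Your argument is correct and is exactly the route the paper intends: the corollary is stated immediately after the observation that $\Xo \subset \Xp$, so one tests \eqref{eq:normal_compact} with $X \in \Xo$ and notes that the boundary term $\int_{\pM}\scal{X}{N}\dif\muv$ vanishes because $X=0$ on $\pM$, recovering \eqref{eq:generalized_mean_curvature}. Your additional check that the $H$ is unchanged (the normalization $H=0$ on $\pM$ being automatic here since $\nv(\pM)=0$) is a sensible, if tautological, verification.
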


\subsection{Varifolds with free boundaries}\label{subsec:free_boundary}
As an immediate corollary of Theorem \ref{thm:positivity_singular_first_variation}, varifolds with free boundaries have bounded first variation.
\begin{corollary}\label{cor:bv_free_boundary_varifolds}
Let $V \in \V_k(\M)$ have free boundary at $\pM$ with $H \in L^1(\M,\nv)$. Then $V$ has bounded first variation. More precisely, there exists a positive Radon measure $\muv$ on $\pM$ and a $\nv$-measurable vector field $\tilde{H}$ such that
\begin{equation}\label{eq:total_fvf_varifold_free_boundary}
\begin{split}
\int_{G_k(\M)} \dive_S X(x) \dif V(x,S)
=
- \int_\M \scal{X}{H + \tilde{H}} \dif \nv
+ \int_{\pM} \scal{X}{N} \dif \muv
\qquad
\forall X \in \Xm,
\end{split}
\end{equation}
where $\tilde{H}$ is defined as in \eqref{eq:def_Htilde}. In particular $\tilde{H}$ is orthogonal to $\pM$, $\tilde{H} \in L^\infty(\pM, \nv)$ and $\normi{\tilde{H}}$ depends only on the second fundamental form of $\pM$.
Moreover, \eqref{eq:estimate_muv_global} \eqref{eq:estimate_measure_boundary} hold true.
\end{corollary}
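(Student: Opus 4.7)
The plan is to decompose an arbitrary $X \in \Xm$ into a tangential part in $\Xt$ and a normal part in $\Xp$, apply the free-boundary hypothesis to the former and Theorem \ref{thm:positivity_singular_first_variation} to the latter, and verify that the two formulae sum to \eqref{eq:total_fvf_varifold_free_boundary}. First I would observe that the hypothesis that $V$ has free boundary at $\pM$ automatically provides $V$ with generalized mean curvature with respect to $\Xo$: indeed $\Xo \subset \Xt$, and for $X \in \Xo$ we have $X \equiv 0$ on $\pM$, so the free-boundary identity reduces to \eqref{eq:generalized_mean_curvature} tested against a vector field whose boundary values are trivially tangent. Thus Theorem \ref{thm:positivity_singular_first_variation} applies and produces the measure $\muv$ and vector field $\tilde H$, together with the estimates \eqref{eq:estimate_muv_global}--\eqref{eq:estimate_measure_boundary}.

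Next, fix $X \in \Xm$ and construct the decomposition as in the proof of Theorem \ref{thm:positivity_singular_first_variation}: pick a smooth cut-off $\phi$ equal to $1$ on $U_{R/2}(\pM)$ and vanishing outside $U_R(\pM)$, and set
\[
X^\perp(x) := \phi(x)\,\scal{X(x)}{\nabla d(x)}\,\nabla d(x), \qquad X^T(x) := X(x) - X^\perp(x).
\]
Then $X^\perp \in \Xp$ and $X^T \in \Xt$, because on $\pM$ one has $\nabla d = -N$, so $X^\perp|_{\pM} = \scal{X}{N}N$ is normal while $X^T|_{\pM} = X - \scal{X}{N}N$ is tangent. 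Applying the free-boundary identity to $X^T$ and Theorem \ref{thm:positivity_singular_first_variation} to $X^\perp$ yields
\begin{align*}
\int_{G_k(\M)} \dive_S X^T \dif V &= -\int_\M \scal{X^T}{H}\dif\nv, \\
\int_{G_k(\M)} \dive_S X^\perp \dif V &= -\int_\M \scal{X^\perp}{H + \tilde H}\dif \nv + \int_{\pM} \scal{X^\perp}{N}\dif\muv.
\end{align*}
Here in the second identity I am using the fact that the generalized mean curvature of $V$ with respect to $\Xo$ coincides on $\interior{\M}$ with the free-boundary $H$ and vanishes by convention on $\pM$; the free-boundary $H$ enters the first identity only tangentially on $\pM$ (as it is tangent there) and fully on $\interior{\M}$.

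Summing the two identities and using linearity of the tangential divergence then gives the left-hand side of \eqref{eq:total_fvf_varifold_free_boundary}. For the right-hand side I would check the pointwise algebra on $\pM$: since $\tilde H \perp \pM$ and $X^T \parallel \pM$ we have $\scal{X^T}{\tilde H} = 0$, since $H \parallel \pM$ and $X^\perp \perp \pM$ we have $\scal{X^\perp}{H} = 0$, and since $X^T \parallel \pM$ we have $\scal{X^T}{N} = 0$; consequently
\[
\scal{X^T}{H} + \scal{X^\perp}{H+\tilde H} = \scal{X}{H+\tilde H} \quad\text{and}\quad \scal{X^\perp}{N} = \scal{X}{N}
\]
on $\pM$, while on $\interior{\M}$ only the $H$-term survives and gives $\scal{X}{H}$. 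The estimates \eqref{eq:estimate_muv_global}--\eqref{eq:estimate_measure_boundary} are inherited verbatim from Theorem \ref{thm:positivity_singular_first_variation}, since they are statements about the measure $\muv$ produced there.

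The only delicate bookkeeping step — and thus the main potential obstacle — is the consistent accounting of the mean curvature: one must keep track of the fact that the ``$H$'' appearing in Theorem \ref{thm:positivity_singular_first_variation} is the one normalized to vanish on $\pM$ (Remark \ref{rem:generalized_curvature_vanishes}), whereas the free-boundary $H$ in the statement of the corollary is tangent but possibly nonzero on $\pM$. The tangential decomposition absorbs this discrepancy cleanly, because the free-boundary identity applied to $X^T$ contributes precisely the missing $\int_{\pM}\scal{X}{H}\dif\nv$ term.
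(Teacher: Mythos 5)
Your proposal is correct and follows essentially the same route as the paper: decompose $X = X^T + X^\perp$ with $X^T \in \Xt$, $X^\perp \in \Xp$, apply the free-boundary identity to $X^T$ and Theorem \ref{thm:positivity_singular_first_variation} to $X^\perp$, and use that $H$ is tangent and $\tilde H$ normal to $\pM$ to recombine the terms. Your explicit handling of the discrepancy between the free-boundary $H$ and the $\Xo$-curvature $H\ind_{\interior{\M}}$ is exactly the point the paper also addresses via Remarks \ref{rem:generalized_curvature_vanishes} and \ref{rem:fb_curvature_tangent}.
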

\begin{proof}
If $X \in \Xm$, there exist $X^T, X^\perp$ such that $X=X^T+X^\perp$, with $X^T \in \Xt$ and $X^\perp \in \Xp$ (see the decomposition at the beginning of the proof of Theorem \ref{thm:positivity_singular_first_variation}). We have
\begin{equation*}
\dive_S X(x)
=
\dive_S X^T(x)
+
\dive_S X^\perp(x).
\end{equation*}
Since the mean curvature of $V$ with respect to $\Xo$ is given by $H \ind_{\interior{\M}}$, by Theorem \ref{thm:positivity_singular_first_variation} there exists a positive Radon measure $\muv$ on $\pM$ and a $\nv$-measurable vector field $\tilde{H}$ such that
\begin{equation*}
\begin{split}
\int_{G_k(\M)} \dive_S X^\perp(x) \dif V(x,S)
= &
- \int_\M \scal{X^\perp}{H \ind_{\interior{\M}} + \tilde{H}} \dif \nv
+ \int_{\pM} \scal{X}{N} \dif \muv
\\
=&
-
\int_\M \scal{X^\perp}{H + \tilde{H}} \dif \nv
+ \int_{\pM} \scal{X}{N} \dif \muv,
\end{split}
\end{equation*}
where the last equality follows because $H$ is assumed to be tangent to $\pM$ on $\pM$ (see Remark \ref{rem:fb_curvature_tangent}).
Moreover we have the same estimates
\eqref{eq:estimate_muv_global} and \eqref{eq:estimate_measure_boundary} on $\muv$. For what concerns the tangent part $X^T$, the definition of varifold with free boundary yields
\begin{equation*}
\begin{split}
\int_{G_k(\M)} \dive_S X^T(x) \dif V(x,S)
=
- \int_\M \scal{X^T}{H} \dif \nv.
\end{split}
\end{equation*}
This shows the conclusion.
\end{proof}

\subsection{Monotonicity formulae}\label{subsec:monotonicity_formulae}
Gr\"uter and Jost established in \cite{Gruter:allardtype} several properties of varifolds with free boundaries: monotonicity formulae for $\nv$ at the boundary \cite[Theorem 3.1]{Gruter:allardtype}, which imply the existence of $\T^k(\nv,x)$ for every point $x$ if the mean curvature $H \in L^p(\M,\nv)$ for some $p>k$.

The monotonicity results are obtained by reflecting the balls across $\pM$, i.e. they have monotonicity of the sum of the masses in the ball and in the reflected ball \cite[Theorem 3.1]{Gruter:allardtype}. Using Corollary \ref{cor:bv_free_boundary_varifolds} it is possible to obtain the monotonicity of the mass in $B_r(x)$, without reflecting the balls.

We begin with a monotonicity inequality which is used also in the proof of Theorem \ref{thm:singular_set_V_lower_dimensional}.
\begin{lemma}[Monotonicity inequality]\label{lmm:monotonicity_inequality_s}
Suppose $V \in \V_k(\M)$ has free boundary at $\pM$, with $H \in L^p(\M,\nv)$ for some $p \in [1,+\infty)$. Then there exists a constant $c>0$ that depends only on $n,k,p$ and on the second fundamental form of $\pM$ such that, for all $x_0 \in \pM$ and $s \in \R$ the following inequality holds:
\begin{equation}\label{eq:monotonicity_inequality_s}
\begin{split}
(1+c\rho)
\frac{\dif}{\dif \rho}
\left(
\frac{1}{\rho^k}
\int_{\M} \gamma\Big( \frac{|x-x_0|}{\rho}\Big)
\dif \nv
\right)^{\frac{1}{p}}
\geq &
- \rho^{-\frac{k-s}{p}}
\Big(
\frac{1}{p} + c\rho
\Big)
\left(
\frac{1}{\rho^s}
\int_{B_\rho} |H +\tilde{H}|^p \dif \nv
\right)^{\frac{1}{p}}
\\
& -
c(1+\rho)
\left( \frac{1}{\rho^k}
\int_{\M} \gamma\Big( \frac{|x-x_0|}{\rho}\Big) \dif \nv(x)
\right)^{\frac{1}{p}}
\end{split}
\end{equation}
\end{lemma}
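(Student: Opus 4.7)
The plan is to derive \eqref{eq:monotonicity_inequality_s} by testing the first variation formula from Corollary \ref{cor:bv_free_boundary_varifolds} against a radial cut-off vector field, converting the resulting identity into a differential inequality for the scale-invariant mass ratio, and then lifting it to the $L^p$ form via the chain rule and Hölder's inequality. Without loss of generality assume $x_0 = 0$, and take as test vector field $X(x) := \gamma(|x|/\rho)\,x$, which is $C^1$ on $\clos{\M}$ and therefore belongs to $\Xm$. A direct computation yields
\begin{equation*}
\dive_S X(x) = k\,\gamma(|x|/\rho) + \tfrac{1}{\rho}\,\gamma'(|x|/\rho)\,\tfrac{|P_Sx|^2}{|x|},
\end{equation*}
and substituting into \eqref{eq:total_fvf_varifold_free_boundary} produces an integral identity relating $\int \gamma\,d\nv$, a $\gamma'$-weighted integral over $V$, the curvature integral $\int\gamma\,\scal{x}{H+\tilde H}\,d\nv$, and the boundary integral $\int_{\pM}\gamma\,\scal{x}{N}\,d\sigma_V$.

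Setting $I(\rho) := \rho^{-k}\int_\M \gamma(|x|/\rho)\,d\nv$, one computes $I'(\rho)$ directly, and the contribution of $-|x|/\rho^2$ coming from $\frac{d}{d\rho}\gamma(|x|/\rho)$ matches, after the decomposition $|x|^2 = |P_S x|^2 + |P_{S^\perp}x|^2$, the $\gamma'$ term in the first variation identity. The transverse piece $|P_{S^\perp}x|^2/|x|$ multiplied by $\gamma' \leq 0$ gives a non-negative contribution to $I'(\rho)$ that can be discarded, leaving
\begin{equation*}
\rho^{k+1}I'(\rho) \geq \int \gamma\,\scal{x}{H+\tilde H}\,d\nv - \int_{\pM}\gamma\,\scal{x}{N}\,d\sigma_V.
\end{equation*}
For the boundary integral I would use the fact that, since $x_0 = 0 \in \pM$ and $\pM$ is $C^2$, every $x \in \pM$ with $|x|\leq\rho$ satisfies the geometric estimate $|\scal{x}{N(x)}|\leq c|x|^2\leq c\rho^2$ with $c$ depending only on the second fundamental form of $\pM$. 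Combined with \eqref{eq:estimate_measure_boundary}, this bounds the boundary term by a multiple of $\rho\,\nv(B_\rho)$ plus $\rho^2\int_{B_\rho}|H|\,d\nv$, which is exactly the contribution that will become the $(1+c\rho)$ prefactor on the left and part of the residual $c(1+\rho)I^{1/p}$ term on the right.

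To reach the $L^p$ form I would apply $\frac{d}{d\rho}I^{1/p} = \tfrac{1}{p}I^{1/p-1}I'(\rho)$ together with Hölder's inequality
\begin{equation*}
\int_{B_\rho}|H+\tilde H|\,d\nv \leq \nv(B_\rho)^{1-1/p}\left(\int_{B_\rho}|H+\tilde H|^p\,d\nv\right)^{1/p}.
\end{equation*}
The resulting powers of $\nv(B_\rho)$ and $\rho$ collapse precisely to yield the scaling $\rho^{-(k-s)/p}(\rho^{-s}\int|H+\tilde H|^p\,d\nv)^{1/p}$ with prefactor $1/p$, while the additional $c\rho$ inside the curvature parenthesis absorbs the $L^\infty$ bound on $\tilde H$ (Corollary \ref{cor:bv_free_boundary_varifolds}) and the $\rho^2$-boundary contribution of the previous step.

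The main obstacle I anticipate is the bookkeeping in the passage from the pointwise differential inequality to the $L^p$ form: tracking all powers of $\rho$, maintaining correct signs when dropping the non-negative tangential piece, and verifying that every error term from the boundary estimate collapses into the precise constants $(1+c\rho)$, $(1/p+c\rho)$, and $c(1+\rho)$ stated in \eqref{eq:monotonicity_inequality_s}. The pointwise bound $|\scal{x}{N(x)}|\leq c|x|^2$ on $\pM$ is the key geometric ingredient making the argument work without the reflection device used in \cite{Gruter:allardtype}.
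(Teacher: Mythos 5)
Your overall skeleton matches the paper's: the test field $X(x)=\gamma(|x|/\rho)\,x$, the identity for $\dive_S X$, discarding the non-negative $|P_{S^\perp}x|^2$ contribution, the geometric bound $|\scal{x}{N(x)}|\le c|x|^2$ on $\pM$, and the final Hölder/chain-rule step are all exactly what the paper does. The gap is in how you dispose of the boundary integral $\rho^{-k}\int_{\pM}\gamma(|x|/\rho)\,|\scal{x/|x|}{N}|\,\dif\muv$. You propose to bound it by combining $|\scal{x}{N}|\le c|x|^2$ with \eqref{eq:estimate_measure_boundary}. But \eqref{eq:estimate_measure_boundary} controls $\muv(B_{\rho})$ only by $\tfrac{c}{2\rho}\nv(B_{2\rho})+\int_{B_{2\rho}}|H|\,\dif\nv$, i.e.\ by the mass ratio at the \emph{doubled} radius. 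The resulting term $c\,\nv(B_{2\rho})/\rho^k = 2^k c\,\nv(B_{2\rho})/(2\rho)^k$ is the density ratio at scale $2\rho$, which at this stage cannot be compared to $I(\rho):=\rho^{-k}\int\gamma(|x|/\rho)\,\dif\nv$ — precisely because the monotonicity you are trying to prove is what would give such a comparison. So your differential inequality does not close: it has the form $I'(\rho)\gtrsim -(\dots)-cI(2\rho)$, which is not integrable by a Gronwall argument. Relatedly, \eqref{eq:estimate_measure_boundary} contains no derivative of $I$, so it cannot account for the $(1+c\rho)$ prefactor on the left of \eqref{eq:monotonicity_inequality_s}; your narrative correctly anticipates that the boundary term must generate that prefactor, but the tool you invoke cannot produce it.

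The missing idea is to estimate $\rho^{1-k}\int\gamma(|x|/\rho)\,\dif\muv$ \emph{at the same scale} $\rho$ by re-running the first-variation argument with the second test field $X=-\gamma(|x|/\rho)\nabla d$ (as in \eqref{eq:estimate_approximate_balls_b_measure}, but keeping the cut-off at radius $\rho$ rather than passing to balls). The term $-\rho^{-k}\int\gamma'(|x|/\rho)\scal{P_S\tfrac{x}{|x|}}{\nabla d}\,\dif V$ is then controlled, using $|x|/\rho\ge 1/2$ on $\supp\gamma'$, by $-2\rho^{1-k}\int\gamma'(|x|/\rho)\tfrac{|x|}{\rho^2}\,\dif\nv = 2\rho^{1-k}\tfrac{\dif}{\dif\rho}\int\gamma(|x|/\rho)\,\dif\nv$, which after rearrangement yields
$\rho^{1-k}\int\gamma\,\dif\muv \le 2\rho\,\tfrac{\dif}{\dif\rho}I(\rho)+(2k+c\rho)I(\rho)+(\text{curvature term})$.
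The derivative term is then moved to the left-hand side, and this is exactly where $(1+c\rho)$ comes from. Without this self-referential, same-scale estimate the argument does not go through as written.
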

\begin{proof}
Without loss of generality we can suppose $x_0 = 0$. Since for large $\rho$ the statement is obvious, we have to prove it only for $0<\rho< R(\M)$, where $R(\M)$ is defined in section \ref{sec:notations}. We want to bound from below the following derivative:
\begin{equation}\label{eq:monotonicity_derivative_initial}
\frac{\dif}{\dif \rho}
\left(
\frac{1}{\rho^k}
\int_{\M} \gamma\Big( \frac{|x|}{\rho}\Big)
\dif \nv
\right)^{\frac{1}{p}}
=
\frac{1}{p}
\frac{\dif}{\dif \rho}
\left(
\frac{1}{\rho^k}
\int_{\M} \gamma\Big( \frac{|x|}{\rho}\Big)
\dif \nv
\right)
\left(
\frac{1}{\rho^k}
\int_{\M} \gamma\Big( \frac{|x|}{\rho}\Big)
\dif \nv
\right)^{\frac{1-p}{p}}.
\end{equation}
To do so, we want to bound from below the derivative in the right-hand side to get a differential inequality. We have
\begin{equation*}
\begin{split}
\frac{\dif}{\dif \rho}
\left(
\frac{1}{\rho^k}
\int_{\M} \gamma\Big( \frac{|x|}{\rho}\Big)
\dif \nv(x)
\right)
=
-\frac{1}{\rho^{k+1}}
\int_{G_k(\M)}
\left( k \gamma\Big( \frac{|x|}{\rho}\Big)
+
\frac{|x|}{\rho} \gamma'\Big( \frac{|x|}{\rho}\Big)
\right) \dif V(x,S).
\end{split}
\end{equation*}
Let us choose $X(x)=\gamma\Big( \frac{|x|}{\rho}\Big) x$. Then
\begin{equation*}
\begin{split}
\dive_S X(x)
=
k \gamma\Big( \frac{|x|}{\rho}\Big)
+
\frac{|x|}{\rho} \gamma'\Big( \frac{|x|}{\rho}\Big)
\left| P_S\frac{x}{|x|}\right|^2 .
\end{split}
\end{equation*}
We use Corollary \ref{cor:bv_free_boundary_varifolds}: by testing \eqref{eq:total_fvf_varifold_free_boundary} with $X$ we get the following \emph{monotonicity identity}:
\begin{equation}\label{eq:monotonicity_identity_fb}
\begin{split}
\frac{\dif}{\dif \rho}
\left(
\frac{1}{\rho^k}
\int_{\M} \gamma\Big( \frac{|x|}{\rho}\Big)
\dif \nv(x)
\right)
= &
-
\frac{1}{\rho^{k+1}} \int_{G_k(\M)} \dive_S X(x) \dif V(x,S)
\\
& \qquad -
\frac{1}{\rho^{k+1}} \int_{G_k(\M)} \frac{|x|}{\rho} \gamma'\Big( \frac{|x|}{\rho}\Big) \left| P_{S^\perp}\frac{x}{|x|}\right|^2 \dif V(x,S)
\\
= 
& \frac{1}{\rho^{k+1}} \int_\M \scal{X}{H+\tilde{H}} \dif \nv
- \frac{1}{\rho^{k+1}} \int_{\pM} \scal{X}{N} \dif \muv
\\
& \qquad
-
\frac{1}{\rho^{k+1}} \int_{G_k(\M)} \frac{|x|}{\rho} \gamma'\Big( \frac{|x|}{\rho}\Big) \left| P_{S^\perp}\frac{x}{|x|}\right|^2 \dif V(x,S)
\end{split}
\end{equation}
We have to estimate from below the last member of the above identity.

Since $\gamma' \leq 0$, we can neglect the last integral and, since $|x| \leq \rho$ by cut-off, we obtain
\begin{equation}\label{eq:monotonicity_estimate_fb}
\begin{split}
\frac{\dif}{\dif \rho}
\left(
\frac{1}{\rho^k}
\int_{\M} \gamma\Big( \frac{|x|}{\rho}\Big)
\dif \nv(x)
\right)
\geq &
- \frac{1}{\rho^{k}} \int_\M \gamma\Big( \frac{|x|}{\rho}\Big)|H +\tilde{H}| \dif \nv
\\
& -
\frac{1}{\rho^{k}} \int_{\pM} \gamma\Big( \frac{|x|}{\rho}\Big)
\Bigl| \scal{\frac{x}{|x|}}{N} \Bigr|
\dif \muv.
\end{split}
\end{equation}
We have now to bound the two terms in the right-hand side of \eqref{eq:monotonicity_estimate_fb}.
\begin{itemize}
\item
For the first one, since $H \in L^p(\M,\nv)$ and $\tilde{H} \in L^{\infty}(\pM,\nv)$, also $H + \tilde{H} \in L^p(\M,\nv)$. Therefore, by H\"older inequality we get
\begin{equation}\label{eq:estimate_curvature_lp}
\begin{split}
\frac{1}{\rho^k}
\int_\M \gamma\Big( \frac{|x|}{\rho}\Big)|H+\tilde{H}| \dif \nv
\leq &
\frac{1}{\rho^k}
\left(
\int_{B_\rho} |H +\tilde{H}|^p \dif \nv
\right)^{\frac{1}{p}}
\left(
\int_\M \gamma^{\frac{p}{p-1}}\Big( \frac{|x|}{\rho}\Big) \dif \nv
\right)^{1-\frac{1}{p}}
\\
\leq &
\left(
\frac{1}{\rho^s}
\int_{B_\rho} |H +\tilde{H}|^p \dif \nv
\right)^{\frac{1}{p}}
\rho^{-\frac{k-s}{p}}
\left( \frac{1}{\rho^k}
\int_\M \gamma\Big( \frac{|x|}{\rho}\Big) \dif \nv
\right)^{1-\frac{1}{p}}
\end{split}
\end{equation}
\item
We now move on the estimate of the second integral in the right-hand side of \eqref{eq:monotonicity_estimate_fb}. Since $\pM$ is of class $C^2$ and since $0 \in \pM$, there exists a constant $c$ such that
\begin{equation}\label{eq:estimate_scalar_normal_radial}
|\scal{\frac{x}{|x|}}{N(x)}|
\leq
c |x|.
\end{equation}
This yields
\begin{equation}\label{eq:first_estimate_measure_monotonicity}
\frac{1}{\rho^{k}} \int_{\pM} \gamma\Big( \frac{|x|}{\rho}\Big)
\big|\scal{\frac{x}{|x|}}{N}\big|
\dif \muv
\leq                                  						
\frac{c}{\rho^{k-1}} \int \gamma\Big( \frac{|x|}{\rho}\Big) \dif \muv.
\end{equation}
We have to further estimate the righ-hand side of this inequality. This is done by testing \eqref{eq:total_fvf_varifold_free_boundary} with $X(x)=-\gamma\Big( \frac{|x|}{r}\Big) \nabla d(x)$; as in \eqref{eq:estimate_approximate_balls_b_measure} we get
\begin{equation*}
\begin{split}                          						
\frac{1}{\rho^{k-1}} \int \gamma\Big( \frac{|x|}{\rho}\Big) \dif \muv
= &																
-\frac{1}{\rho^{k-1}} \int_{G_k(\M)} \gamma'\Big(\frac{|x|}{\rho}\Big) \frac{1}{\rho} \scal{P_S \frac{x}{|x|}}{\nabla d(x)} \dif V(x,S)
\\
& -
\frac{1}{\rho^{k-1}} 
\int_{G_k(\M)} \gamma\Big( \frac{|x|}{\rho}\Big) \dive_S \nabla d(x) \dif V(x,S)
\\
& -
\frac{1}{\rho^{k-1}} 
\int_\M \gamma\Big( \frac{|x|}{\rho}\Big) \scal{\nabla d(x)}{H +\tilde{H}} \dif \nv(x).
\end{split}
\end{equation*}
Since $\gamma'(s)=0$ if $s \in (0,1/2)$, we have $\frac{|x|}{\rho} \geq \frac{1}{2}$; Moreover, using $|\dive_S \nabla d|\leq c$ (because $\rho<R$ and $d$ is of class $C^2$ in $\clos{U_R(\pM)}$ ) and \eqref{eq:estimate_curvature_lp}, we obtain
\begin{equation}\label{eq:penultima_stima_monotonicity}
\begin{split}
\frac{1}{\rho^{k-1}} \int \gamma\Big( \frac{|x|}{\rho}\Big) \dif \muv
\leq &																
-\frac{2}{\rho^{k-1}} \int_{\M} \gamma'\Big(\frac{|x|}{\rho}\Big) \frac{|x|}{\rho^2}
\dif \nv(x)
+ \frac{c}{\rho^{k-1}}  \int_\M \gamma\Big( \frac{|x|}{\rho}\Big) \dif \nv(x)
\\
& +
\rho^{1-\frac{k-s}{p}}
\left(
\frac{1}{\rho^s}
\int_{B_\rho} |H +\tilde{H}|^p \dif \nv
\right)^{\frac{1}{p}}
\left( \frac{1}{\rho^k}
\int_\M \gamma\Big( \frac{|x|}{\rho}\Big) \dif \nv
\right)^{1-\frac{1}{p}}
\\
= & 														
\frac{2}{\rho^{k-1}}
\frac{\dif}{\dif \rho}
\left(
\int_{\M} \gamma\Big( \frac{|x|}{\rho}\Big) \dif \nv(x)
\right)
-\frac{2(k-1)}{\rho^k} \int_{\M} \gamma\Big( \frac{|x|}{\rho}\Big) \dif \nv(x)
\\
& +
\frac{2(k-1) +c\rho}{\rho^{k}}  \int_\M \gamma\Big( \frac{|x|}{\rho}\Big) \dif \nv(x)
\\
& +
\rho^{1-\frac{k-s}{p}}
\left(
\frac{1}{\rho^s}
\int_{B_\rho} |H +\tilde{H}|^p \dif \nv
\right)^{\frac{1}{p}}
\left( \frac{1}{\rho^k}
\int_\M \gamma\Big( \frac{|x|}{\rho}\Big) \dif \nv
\right)^{1-\frac{1}{p}}
\end{split}
\end{equation}
Concerning the last member, we now observe that
\begin{equation*}
\begin{split}
\frac{2}{\rho^{k-1}}
\frac{\dif}{\dif \rho}
\left(
\int_{\M} \gamma\Big( \frac{|x|}{\rho}\Big) \dif \nv(x)
\right)
-&
\frac{2(k-1)}{\rho^k} \int_{\M} \gamma\Big( \frac{|x|}{\rho}\Big) \dif \nv(x)
\\
= &
2 \frac{\dif}{\dif \rho}
\left(
\frac{1}{\rho^{k-1}}
\int_{\M} \gamma\Big( \frac{|x|}{\rho}\Big) \dif \nv(x)
\right)
\end{split}
\end{equation*}
Substituting in \eqref{eq:penultima_stima_monotonicity} we get
\begin{equation*}
\begin{split}
\frac{1}{\rho^{k-1}} \int \gamma\Big( \frac{|x|}{\rho}\Big) \dif \muv
\leq &	\,												
\, 2 \frac{\dif}{\dif \rho}
\left(
\frac{1}{\rho^{k-1}}
\int_{\M} \gamma\Big( \frac{|x|}{\rho}\Big) \dif \nv(x)
\right)
\\
& +
\frac{2(k-1) +c\rho}{\rho^{k}}  \int_\M \gamma\Big( \frac{|x|}{\rho}\Big) \dif \nv(x)
\\
& +
\rho^{1-\frac{k-s}{p}}
\left(
\frac{1}{\rho^s}
\int_{B_\rho} |H +\tilde{H}|^p \dif \nv
\right)^{\frac{1}{p}}
\left( \frac{1}{\rho^k}
\int_\M \gamma\Big( \frac{|x|}{\rho}\Big) \dif \nv
\right)^{1-\frac{1}{p}}.
\end{split}
\end{equation*}
Taking into account that
\begin{equation*}
\begin{split}
\frac{\dif}{\dif \rho}
&
\left(
\frac{1}{\rho^{k-1}}
\int_{\M} \gamma\Big( \frac{|x|}{\rho}\Big) \dif \nv(x)
\right)
\\
& \qquad =
\frac{1}{\rho^{k}}
\int_{\M} \gamma\Big( \frac{|x|}{\rho}\Big) \dif \nv(x)
+
\rho
\frac{\dif}{\dif \rho}
\left(
\frac{1}{\rho^{k}}
\int_{\M} \gamma\Big( \frac{|x|}{\rho}\Big) \dif \nv(x)
\right),
\end{split}
\end{equation*}
we obtain
\begin{equation}\label{eq:last_estimate_measure_monotonicity}
\begin{split}
\frac{1}{\rho^{k-1}} \int \gamma\Big( \frac{|x|}{\rho}\Big) \dif \muv
\leq &	\,	
\frac{2k + c\rho}{\rho^{k}}
\int_{\M} \gamma\Big( \frac{|x|}{\rho}\Big) \dif \nv(x)
+
2\rho
\frac{\dif}{\dif \rho}
\left(
\frac{1}{\rho^{k}}
\int_{\M} \gamma\Big( \frac{|x|}{\rho}\Big) \dif \nv(x)
\right)
\\
& +
\rho^{1-\frac{k-s}{p}}
\left(
\frac{1}{\rho^s}
\int_{B_\rho} |H +\tilde{H}|^p \dif \nv
\right)^{\frac{1}{p}}
\left( \frac{1}{\rho^k}
\int_\M \gamma\Big( \frac{|x|}{\rho}\Big) \dif \nv
\right)^{1-\frac{1}{p}}.
\end{split}
\end{equation}
This complete the estimate of the second term in the right-hand side of \eqref{eq:monotonicity_estimate_fb}.
\end{itemize}
Gathering \eqref{eq:estimate_curvature_lp}, \eqref{eq:first_estimate_measure_monotonicity} and \eqref{eq:last_estimate_measure_monotonicity} in \eqref{eq:monotonicity_estimate_fb}, we can estimate \eqref{eq:monotonicity_derivative_initial} as follows:
\begin{equation*}
\begin{split}
\frac{\dif}{\dif \rho}
\left(
\frac{1}{\rho^k}
\int_{\M} \gamma\Big( \frac{|x|}{\rho}\Big)
\dif \nv
\right)^{\frac{1}{p}}
= &
\frac{1}{p}
\frac{\dif}{\dif \rho}
\left(
\frac{1}{\rho^k}
\int_{\M} \gamma\Big( \frac{|x|}{\rho}\Big)
\dif \nv
\right)
\left(
\frac{1}{\rho^k}
\int_{\M} \gamma\Big( \frac{|x|}{\rho}\Big)
\dif \nv
\right)^{\frac{1-p}{p}}
\\
\geq &
- \rho^{-\frac{k-s}{p}}
\Big(
\frac{1}{p} + c\rho
\Big)
\left(
\frac{1}{\rho^s}
\int_{B_\rho} |H +\tilde{H}|^p \dif \nv
\right)^{\frac{1}{p}}
\\
& -
c(1+\rho)
\left( \frac{1}{\rho^k}
\int_{\M} \gamma\Big( \frac{|x|}{\rho}\Big) \dif \nv(x)
\right)^{\frac{1}{p}}
\\
& -
c \rho
\frac{\dif}{\dif \rho}
\left(
\frac{1}{\rho^{k}}
\int_{\M} \gamma\Big( \frac{|x|}{\rho}\Big) \dif \nv(x)
\right)^{\frac{1}{p}}.
\end{split}
\end{equation*}
which is the desired inequality.
\end{proof}

\begin{corollary}\label{cor:monotonicity_formula}
Let $V \in \V_k(\M)$ with free boundary at $\pM$, with $H \in L^p(\M,\nv)$ for some $p \in (k,+\infty)$. Then there exists $\Lambda = \Lambda (k,p,\M,\norm{H}_{L^p})>0$ such that, for all $x_0 \in \pM$ the function
\begin{equation}\label{eq:monotonicity_mass_p}
\rho
\longmapsto
e^{\Lambda \rho}
\left(
\frac{\nv\big( B_\rho(x_0)\big)}{\rho^k}
\right)^{\frac{1}{p}}
+ \Lambda e^{\Lambda \rho} \rho^{1- \frac{k}{p}}
\end{equation}
is monotone increasing.
\end{corollary}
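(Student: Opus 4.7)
The plan is to derive the claim from the monotonicity inequality of Lemma \ref{lmm:monotonicity_inequality_s} by an integrating-factor argument. The first observation is that the right-hand side of \eqref{eq:monotonicity_inequality_s} is actually independent of the parameter $s$: the exponents combine to give
\begin{equation*}
\rho^{-\frac{k-s}{p}}\left(\frac{1}{\rho^s}\int_{B_\rho}|H+\tilde{H}|^p\dif\nv\right)^{\frac{1}{p}}
= \rho^{-k/p}\left(\int_{B_\rho}|H+\tilde{H}|^p\dif\nv\right)^{\frac{1}{p}}
\leq M\,\rho^{-k/p},
\end{equation*}
where $M := \norm{H+\tilde{H}}_{L^p(\M,\nv)}$ is finite because $H\in L^p(\M,\nv)$, $\tilde{H}\in L^\infty(\pM,\nv)$ (with $\normi{\tilde{H}}$ controlled by the second fundamental form of $\pM$ by Corollary \ref{cor:bv_free_boundary_varifolds}), and $\nv(\M)<\infty$. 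Setting $f(\rho) := \big(\rho^{-k}\int_{\M}\gamma(|x-x_0|/\rho)\dif\nv\big)^{1/p}$ and using that $0<\rho\leq\diam(\M)$ can absorb bounded factors, \eqref{eq:monotonicity_inequality_s} becomes a first-order differential inequality
\begin{equation*}
f'(\rho)\geq -\Bigl(\tfrac{M}{p}+c_1 M\rho\Bigr)\rho^{-k/p} - c_2\, f(\rho),
\end{equation*}
with constants $c_1,c_2$ depending only on $k,p,\M$.

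Next, I would set $g(\rho) := e^{\Lambda\rho}f(\rho) + \Lambda e^{\Lambda\rho}\rho^{1-k/p}$ and compute
\begin{equation*}
g'(\rho)
= e^{\Lambda\rho}\bigl(\Lambda f + f'\bigr)
+ \Lambda e^{\Lambda\rho}\bigl(\Lambda\rho^{1-k/p}+(1-k/p)\rho^{-k/p}\bigr).
\end{equation*}
Substituting the differential inequality for $f'$ and grouping terms according to the powers $f$, $\rho^{-k/p}$ and $\rho^{1-k/p}$, one obtains
\begin{equation*}
g'(\rho) \geq e^{\Lambda\rho}\Bigl[(\Lambda-c_2)f(\rho) + \bigl(\Lambda(1-\tfrac{k}{p})-\tfrac{M}{p}\bigr)\rho^{-k/p} + \bigl(\Lambda^2 - c_1 M\bigr)\rho^{1-k/p}\Bigr].
\end{equation*}
Since $p>k$ we have $1-k/p>0$, so each bracket can be made non-negative simultaneously by choosing $\Lambda$ large enough in terms of $k,p,\M$ and $\norm{H}_{L^p}$; this gives $g'(\rho)\geq 0$.

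Finally, to pass from the smoothed quantity $f(\rho)$ to the geometric quantity $(\nv(B_\rho(x_0))/\rho^k)^{1/p}$ appearing in \eqref{eq:monotonicity_mass_p}, I would approximate $\mathbf{1}_{[0,1]}$ from above and from below by smooth cut-offs of the type prescribed for $\gamma$ in section \ref{sec:notations} and pass to the limit; this is standard and uses only that $\rho\mapsto\nv(B_\rho(x_0))$ has at most countably many discontinuities. The main obstacle in the argument is conceptual rather than technical: one must notice the cancellation that makes \eqref{eq:monotonicity_inequality_s} $s$-independent in this application, and then exploit the hypothesis $p>k$ precisely in order for the correction term $\Lambda\rho^{1-k/p}$, whose derivative produces the saving factor $\Lambda(1-k/p)\rho^{-k/p}$, to dominate the obstruction $\tfrac{M}{p}\rho^{-k/p}$ coming from the curvature integral.
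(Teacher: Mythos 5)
Your proposal is correct and follows essentially the same route as the paper: it takes $s=0$ in Lemma \ref{lmm:monotonicity_inequality_s} (your observation that the right-hand side is $s$-independent amounts to the same thing), absorbs the $(1+c\rho)$ factor using $\rho\leq\diam(\M)$, and shows that $e^{\Lambda\rho}\bigl(f+\Lambda\rho^{1-k/p}\bigr)$ is nondecreasing for $\Lambda$ large, before letting $\gamma$ increase to the indicator function. Your write-up merely makes explicit the "rearranging" step that the paper leaves implicit, correctly isolating where $p>k$ is used.
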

\begin{proof}
Without loss of generality we can assume $x_0=0 \in \pM$. We first notice that we have to prove the result only for small $\rho$, since it is clearly true for $\rho> \diam(\M)$.

We choose $s=0$ in \eqref{eq:monotonicity_inequality_s}; therefore $\frac{k}{p} \in (0,1)$ since $p>k$.  
Rearranging we obtain the existence of $\Lambda > 0$, which depends on the constant $c$ of Lemma \ref{lmm:monotonicity_inequality_s}, on $\norm{H}_{L^p(\M)}$ and on the second fundamental form of $\pM$, such that
\begin{equation*}
\begin{split}
\frac{\dif}{\dif \rho}
e^{\Lambda \rho}
\left[
\left(
\frac{1}{\rho^k}
\int_{\M} \gamma\Big( \frac{|x|}{\rho}\Big)
\dif \nv(x)
\right)^{\frac{1}{p}}
+ \Lambda \rho^{1- \frac{k}{p}}
\right]
\geq
0.
\end{split}
\end{equation*}
Since the constant $c$ of Lemma \ref{lmm:monotonicity_inequality_s} does not depend on the choice of $\gamma$ (the estimates are indipendent on the choice of $\gamma$ unless that $\gamma'(s)=0$ for $s \in (0,1/2)$), letting $\gamma$ increase to $\ind_{[0,1)}$ we have that the function
\begin{equation*}
\rho
\longmapsto
e^{\Lambda \rho}
\left[
\left(
\frac{\nv(B_\rho)}{\rho^k}
\right)^{\frac{1}{p}}
+ \Lambda \rho^{1- \frac{k}{p}}
\right]
\end{equation*}
is monotone increasing.
\end{proof}
If $H \in L^{\infty}(\M, \nv)$, in \eqref{eq:estimate_curvature_lp} we simply estimate
\begin{equation*}
\begin{split}
\frac{1}{\rho^k}
\int_\M \gamma\Big( \frac{|x|}{\rho}\Big)|H +\tilde{H}| \dif \nv
\leq &
\frac{\normi{H + \tilde{H}}}{\rho^k}
\int_\M \gamma \Big( \frac{|x|}{\rho}\Big) \dif \nv
\end{split}
\end{equation*}
By repeating the previous arguments we have the following result:
\begin{corollary}\label{cor:monotonicity_formula_infty}
Let $V \in \V_k(\M)$ have free boundary at $\pM$ with $H \in L^\infty(\M,\nv)$. Then there exists $\Lambda = \Lambda (k,\M,\normi{H})$ such that, for all $x \in \pM$ the function
\begin{equation}\label{eq:monotonicity_mass_infty}
\rho
\longmapsto
e^{\Lambda \rho}
\frac{\nv\big( B_\rho(x)\big)}{\rho^k}
\end{equation}
is monotone increasing.
\end{corollary}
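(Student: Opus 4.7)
The plan is to repeat verbatim the argument of Corollary \ref{cor:monotonicity_formula} (and of the underlying Lemma \ref{lmm:monotonicity_inequality_s}) with $p=\infty$ throughout, substituting every H\"older-type bound on $|H+\tilde H|$ by the trivial $L^\infty$ estimate displayed just before the statement. Because no nontrivial $1/p$-power is needed on either side, the resulting differential inequality becomes linear in $\rho^{-k}\int\gamma(|x|/\rho)\,d\nv$ rather than in its $p$-th root, and this is what produces the pure exponential factor $e^{\Lambda \rho}$ in \eqref{eq:monotonicity_mass_infty}.

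Concretely, fix $x_0 \in \pM$, assume $x_0=0$, and restrict to $\rho < R(\M)$ (for larger radii the monotonicity will be trivial after enlarging $\Lambda$). I would begin from the identity \eqref{eq:monotonicity_identity_fb}, drop the non-positive orthogonal-projection integrand, and bound $|\langle x/|x|,N(x)\rangle|\leq c|x|\leq c\rho$ on $\pM$ via \eqref{eq:estimate_scalar_normal_radial}, to obtain
\begin{equation*}
\frac{d}{d\rho}\!\left(\frac{1}{\rho^k}\int_\M \gamma\!\left(\frac{|x|}{\rho}\right)\,d\nv\right)
\geq -\normi{H+\tilde H}\frac{1}{\rho^k}\int_\M \gamma\!\left(\frac{|x|}{\rho}\right) d\nv
-\frac{c}{\rho^{k-1}}\int_{\pM}\gamma\!\left(\frac{|x|}{\rho}\right) d\muv.
\end{equation*}
Next I would reproduce the estimate \eqref{eq:last_estimate_measure_monotonicity} on $\rho^{-(k-1)}\!\int\gamma(|x|/\rho)\,d\muv$ under the $L^\infty$ assumption, by testing \eqref{eq:total_fvf_varifold_free_boundary} against $X(x) = -\gamma(|x|/\rho)\nabla d(x)$ and replacing, as the only change, the H\"older step by the bound on $|H+\tilde H|$ in $L^\infty$. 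Plugging back gives a linear differential inequality
\begin{equation*}
(1+c\rho)\,\frac{d}{d\rho} F(\rho) \geq -\Lambda\, F(\rho),
\qquad F(\rho) := \frac{1}{\rho^k}\int_\M \gamma\!\left(\frac{|x|}{\rho}\right) d\nv,
\end{equation*}
with $\Lambda$ depending only on $k$, $\M$ and $\normi{H}$ (through $\normi{H+\tilde H}$, which is controlled by $\normi{H}$ and by the second fundamental form of $\pM$).

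From the ODE inequality one concludes that $\rho\mapsto e^{\Lambda\rho} F(\rho)$ is non-decreasing on $(0,R(\M))$, and, since none of the constants involved depend on the specific choice of cut-off (provided $\gamma'$ vanishes on $[0,1/2)$), letting $\gamma\uparrow \ind_{[0,1)}$ promotes the monotonicity to $\rho\mapsto e^{\Lambda\rho}\rho^{-k}\nv(B_\rho(x_0))$. For $\rho\geq R(\M)$ the bound $\nv(B_\rho)\leq \nv(\M)$ forces monotonicity of \eqref{eq:monotonicity_mass_infty} after possibly enlarging $\Lambda$. I do not anticipate any real obstacle: the whole argument is bookkeeping on the scheme of Lemma \ref{lmm:monotonicity_inequality_s}, and the only delicate point is checking that the error term $2\rho\,\frac{d}{d\rho}F(\rho)$ coming from the estimate of $\muv$ can be absorbed into the factor $(1+c\rho)$ on the left-hand side, exactly as in the proof of that lemma.
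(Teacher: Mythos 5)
Your proposal is correct and follows essentially the same route as the paper: the paper's proof consists precisely of replacing the H\"older step \eqref{eq:estimate_curvature_lp} by the trivial bound $\rho^{-k}\int_\M \gamma(|x|/\rho)|H+\tilde H|\dif\nv \leq \normi{H+\tilde H}\,\rho^{-k}\int_\M \gamma(|x|/\rho)\dif\nv$ and then ``repeating the previous arguments'' of Lemma \ref{lmm:monotonicity_inequality_s} and Corollary \ref{cor:monotonicity_formula}, which is exactly the bookkeeping you carry out. Your observations about the linear (rather than $p$-th root) differential inequality, the absorption of the $2\rho\,\frac{\dif}{\dif\rho}F(\rho)$ term into the $(1+c\rho)$ factor, and the passage $\gamma\uparrow\ind_{[0,1)}$ all match the paper's scheme.
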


\subsection{Density set and proof of Theorem \ref{thm:singular_set_V_lower_dimensional}}\label{sec:proof_regular_set}
In order to prove Theorem \ref{thm:rectifiability_free_boundary} when $p \in (1,k]$, we have to show that, for every varifold $V$ with free boundary at $\pM$ with $H \in L^p(\M,\nv)$, the Hausdorff dimension of the set of points where the $k$-density $\T^k(\nv,\cdot)$ does not exist or is infinite is at most $k-p$.

We introduce the definition of \emph{density set} for a $k$-varifold $V$.

\begin{defi}[Density set]\label{def:regular_points_varifolds}
Let $V \in V_k(\M)$ has free boundary at $\pM$ and $H \in L^p(\M,\nv)$ for some $p \geq 1$. Then the density set for $V$ is defined as
\begin{gather*}
\reg(V)
= \bigcup_{s \in (k-p, k]}
\Big\{
x \in \M
\mid
\limsup_{r \to 0}
\frac{1}{r^{s}} \int_{B_{r}(x)} |H + \tilde{H}|^p \dif \norm{V}
= 0
\Big\}.
\end{gather*}
\end{defi}
\begin{rem}
We notice that, if $p>k$, then $\reg(V)= \M$ ($s=0 \in (k-p,k]$) and Theorem \ref{thm:singular_set_V_lower_dimensional} is an easy consequence of the monotonicity of density ratios, Corollaries \ref{cor:monotonicity_formula} and \ref{cor:monotonicity_formula_infty}. Thus Theorem \ref{thm:singular_set_V_lower_dimensional} is an extension to lower integrability of the mean curvature of the well-known existence of the density at every point when $p > k$.
\end{rem}

\begin{proof}[Proof of Theorem \ref{thm:singular_set_V_lower_dimensional}]
We first show that $\haus{s}\big(M \- \reg(V) \big)=0$ for all $s> k-p$. We have
\begin{equation*}
\reg(V) = \bigcup_{s \in (k-p, k]} A_s;
\qquad
A_s = \Big\{
x \in \M
\mid
\limsup_{r \to 0}
\frac{1}{r^{s}} \int_{B_{r}(x)} |H + \tilde{H}|^p \dif \norm{V}
= 0
\Big\}.
\end{equation*}
If $\mu$ is absolutely continuous with respect to $\nv$ and has density $|H+\tilde{H}|^p$, that is $\mu=|H+ \tilde{H}|^p\nv$, then for any $s \in (k-p, k]$ we have
\begin{equation*}
\M \- A_s = \bigcup_{i \in \N}\Big\{x \in \M \mid \T^{*s}(\mu,x) \geq \frac{1}{i}\Big\}.
\end{equation*}
Then, by \cite[Theorem 6.9]{mattila_1995}, for every $i \in \N$
\begin{equation*}
\haus{s}\Big( \big\{x\mid \T^{*s}(\mu,x) \geq \frac{1}{i}\big\}\Big)
\leq i
\mu\Big( \big\{x\mid \T^{*s}(\mu,x) \geq \frac{1}{i}\big\}\Big).
\end{equation*}
We want to show that for every $i \in \N$ the right-hand side is equal to $0$, which would prove \eqref{eq:dimension_singular_points}. To see this, we first recall that for a varifold with bounded first variation in $\M$ the $\tk(\nv,x)$ exists and is finite for $\nv$-a.e. $x \in \M$ by \cite[Lemma 40.5]{simon:lectures}; since 
\begin{equation*}
\lim_{r \to 0} \frac{1}{\nv(B_r(x))} \int_{B_r(x)} |H+\tilde{H}|^p \dif \nv
< \infty
\qquad
\text{for } \nv\text{-a.e. } x \in \M,
\end{equation*}
we have
\begin{equation*}
\mu\Big( \big\{x\mid \T^{*s}(\mu,x) \geq \frac{1}{i}\big\}\Big)
=
0.
\end{equation*}
Thus
$\haus{s}\big( \M \- \reg(V) \big) \leq \haus{s}\big( \M \- A_s \big) = 0$. This shows \eqref{eq:dimension_singular_points}.

In order to prove \eqref{eq:monotonicity_phi}, we fix $x_0 \in \reg(V) \cap \pM$ and $s \in (k-p,k]$ such that $x_0 \in A_s$. Without loss of generality we can assume $x_0=0$. It is enough to show the result only for small radii, because it is clearly valid if $\diam(\M) \leq r < t$. By the choice of $s$, we have $\frac{k-s}{p} \in [0,1)$. Applying Lemma \ref{lmm:monotonicity_inequality_s} and taking into account $0 \in A_s$, we obtain that exists $\Lambda>0$ (which depends on the point $x_0$ chosen) such that
\begin{equation*}
\begin{split}
\frac{\dif}{\dif \rho}
e^{\Lambda \rho}
\left[
\left(
\frac{1}{\rho^k}
\int_{\M} \gamma\Big( \frac{|x|}{\rho}\Big)
\dif \nv(x)
\right)^{\frac{1}{p}}
+ \Lambda \rho^{1- \frac{k-s}{p}}
\right]
\geq
0.
\end{split}
\end{equation*}
Since this is independent on the choice of $\gamma$, letting $\gamma$ to increase to $\ind_{[0,1)}$ we obtain \eqref{eq:monotonicity_phi}.

If $x \in \reg(V) \cap \interior{M}$ the proof of \eqref{eq:monotonicity_phi} is the same, except for the fact that we have to use the classical monotonicity inequality for interior points, which indeed is
\begin{equation*}\label{eq:monotonicity_inequality_s_interior}
\begin{split}
\frac{\dif}{\dif \rho}
\left(
\frac{1}{\rho^k}
\int_{\M} \gamma\Big( \frac{|x-x_0|}{\rho}\Big)
\dif \nv
\right)^{\frac{1}{p}}
\geq &
- \frac{\rho^{-\frac{k-s}{p}}}
{p}
\left(
\frac{1}{\rho^s}
\int_{B_\rho} |H|^p \dif \nv
\right)^{\frac{1}{p}}
\\
& -
\left( \frac{1}{\rho^k}
\int_{\M} \gamma\Big( \frac{|x-x_0|}{\rho}\Big) \dif \nv(x)
\right)^{\frac{1}{p}}.
\end{split}
\end{equation*}

The existence of $\T^k(\nv,\cdot)$ on $\reg(V)$ and the upper semi-continuity of the restrictions of $\tkv{\cdot}$ to $\reg(V) \cap \interior{\M}$ and to $\reg(V) \cap \pM$ are easy consequences of \eqref{eq:monotonicity_phi}.
\end{proof}

\section{Proof of Theorem \ref{thm:rectifiability_free_boundary}}\label{sec:proof_rectifiability}
In this section we often use the following assumption on $V$.
\begin{assumption}\label{ass:varifold_rectifiability}
$V \in \V_k(\M)$ is a rectifiable $k$-varifold with free boundary at $\pM$, with generalized mean curvature $H \in L^p(\M,\nv)$ for some $p\geq 1$ and $\tkv{x}\geq 1$ for $\nv$-a.e.\ $x \in \M$.
\end{assumption}
We remark that if $V$ satisfies Assumption \ref{ass:varifold_rectifiability}, Corollary \ref{cor:bv_free_boundary_varifolds} establishes the existence of the measure $\muv$ and that $V$ has bounded first variation.

We first recall the definition of $\mus$, the \emph{$(k-1)$-dimensional part} of $\muv$:
\begin{equation*}
\mus
=
{\muv}
\llcorner
E,
\qquad
E=
\{
x \mid
0< \tlu(\muv,x) \leq \tuu(\muv,x) < +\infty
\}.
\end{equation*}
As stated in the introduction, we recall that if $H \in L^p(\M, \nv)$ for some $p>k$, then the condition $\tuu(\muv,x) < +\infty$ is not restrictive, since it holds \emph{for every} point $x \in \pM$; indeed  \eqref{eq:estimate_measure_boundary} and H\"older inequality yield
\begin{equation*}
\begin{split}
\frac{\muv(B_{r/2}(x)}{r^{k-1}}
\leq &
c \frac {\nv \big( B_r(x) \big)}{r^k}
+
\frac{1}{r^{k-1}}\int_{B_r(x)} |H| \dif \nv
\\
\leq &
c \frac {\nv \big( B_r(x) \big)}{r^k}
+
r^{1- \frac{k}{p}}\bigg(\int_{B_r(x)} |H|^p \dif \nv \bigg)^{\frac{1}{p}}
\bigg( \frac {\nv \big( B_r(x) \big)}{r^k} \bigg)^{1-\frac{1}{p}};
\end{split}
\end{equation*}
By Lemma \ref{cor:monotonicity_formula} and $p>k$, the last member of the above inequality is bounded as $r \to 0$, therefore it follows $\tuu(\muv,x) < +\infty$.
This proves that Theorem \ref{thm:rectif_fb_p>k} is a corollary of Theorem \ref{thm:rectifiability_free_boundary}.
 
Since in the proof of Theorem \ref{thm:rectifiability_free_boundary} we have to deal with the blow-ups of $V$ at points on $\pM$, we introduce a notation for the scalings of $V$ at a point $x_0 \in \pM$ that we use throughout this section.
\begin{notation}[Scalings]\label{not:scalings}
If $V \in \V_k(\M)$ has free boundary at $\pM$, if $x_0 \in \pM$ is a fixed point and $r_j \downarrow 0$ is a fixed sequence, we use the following notations:
\begin{equation*}
\M_j:= \dil{x_0,r_j}(\M)
\qquad
V_j := (\dil{x_0,r_j})_\sharp V
\in \V_k(\M_j)
\qquad
\muj := \muvj;
\end{equation*}
where $\dil{x_0,r_j}$ is the dilation function defined in \eqref{eq:dilation_map} and $f_\sharp V$ denotes the push-forward of $V$ through $f$ defined in \eqref{eq:push-forward_varifolds}.
In Lemma \ref{lmm:tangent_cones} we show that each $V_j$ has free boundary at $\pM_j$, thus Corollary \ref{cor:bv_free_boundary_varifolds} states the existence of $\muvj$, which we call $\muj$. Moreover, we denote by $H_j$ the generalized mean curvature of $V_j$ and $\tilde{H}_j$ the vector field provided by Corollary \ref{cor:bv_free_boundary_varifolds} relative to $V_j$. If $x \in \pM_j$, we denote by $N_j(x)$ the exterior unit normal to $\M_j$ at $x$. As $j \to \infty$, according to the definition of tangent space given in section \ref{sec:notations}, we say that $\M_j \to T_{x_0}^+\M$ and that $\pM_j \to T_{x_0}\pM$.
\end{notation}

The idea of the proof is to study the blow-ups of $V$ at points on $\pM$, to prove that the at $\mus$-a.e.\ point on $\pM$, every $(k-1)$-blow-up of $\mus$ is of the form $\alpha \haus{k-1}\llcorner S$ for some $(k-1)$-dimensional linear subspace $S$. This allows us to apply the Marstrand-Mattila Rectifiability Criterion \cite[Theorem 5.1]{delellis:rectifiable} to $\mus$. We state explicitly the criterion for the reader convenience:
\begin{theorem}[Marstrand-Mattila Rectifiability Criterion]\label{thm:M-M_criterion}
Let $m \leq n$ be a natural number and let $\mu$ be a positive Radon measure on $\R^n$ such that, for $\mu$-a.e.\ $x \in \R^n$, we have
\begin{enumerate}
\item $0 < \T_*^m(\mu,x) \leq \T^{* m}(\mu,x) < \infty$;
\item Every $m$-tangent measure of $\mu$ at $x$ is of the form $\beta \haus{m} \llcorner S$ for some $m$-dimensional linear subspace of $\R^n$.
\end{enumerate}
Then $\mu$ is $m$-rectifiable.
\end{theorem}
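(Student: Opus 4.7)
The plan is to deduce rectifiability by combining a localization step, Preiss's tangent-to-a-tangent principle (which upgrades hypothesis~(2) to uniqueness of the tangent plane at $\mu$-a.e.\ point), and a standard Lipschitz-graph cover argument.

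First, fix $\lambda>1$ and restrict $\mu$ to $E_\lambda=\{x : \lambda^{-1}\leq \T_*^m(\mu,x) \text{ and } \T^{*m}(\mu,x)\leq \lambda\}$. By hypothesis~(1), $\mu(\R^n\setminus \bigcup_\lambda E_\lambda)=0$, so it suffices to prove that $\mu_\lambda := \mu\llcorner E_\lambda$ is $m$-rectifiable for each $\lambda$. The density bounds guarantee that $\mu$ and $\mu_\lambda$ agree up to a locally bounded Radon--Nikod\'ym factor on $E_\lambda$, so $\tang^m(\mu_\lambda,x)=\tang^m(\mu,x)$ (up to a positive multiplicative constant) at $\mu_\lambda$-a.e.\ $x$; hypothesis~(2) is thus preserved by the restriction.

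Second, I would invoke Preiss's principle that tangents of tangent measures are again tangent measures of the original. Combined with hypothesis~(2), this forces $\tang^m(\mu,x)$ at $\mu$-a.e.\ $x$ to be closed under translations along the underlying plane, and hence (since each element of $\tang^m(\mu,x)$ is flat) to consist of measures sharing a common plane $S(x)$ and a common multiplicity $\beta(x)>0$. A connectedness-and-closedness argument on the (compact modulo rescaling) set of tangent measures, together with the density bounds, rules out rotation of the tangent plane through the blow-up sequence, yielding $\tang^m(\mu,x)=\{\beta(x)\haus{m}\llcorner S(x)\}$ and in particular the existence of $\T^m(\mu,x)=\omega_m\beta(x)$ finite and positive.

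Third, with a unique tangent plane $S(x)$ available $\mu$-a.e., the standard cone-and-cover argument applies: for each $\varepsilon>0$ and $\mu$-a.e.\ $x$ there exists $r(x)>0$ such that $\supp\mu\cap B_{r(x)}(x)$ sits inside an $\varepsilon$-cone about $x+S(x)$. Lusin's theorem plus a Vitali cover produce countably many pieces, each contained in the graph of an $\varepsilon$-Lipschitz function over an $m$-plane, whose union exhausts $\mu$-almost all of $\supp\mu$; these Lipschitz graphs can then be upgraded to $C^1$ $m$-submanifolds $\haus{m}$-a.e.\ via Rademacher, giving the required countable family.

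The main obstacle is the second step: extracting uniqueness of the tangent plane from the mere flatness of each individual tangent measure. Hypothesis~(2) allows, a priori, different planes $S$ for different blow-up radii $r_j\downarrow 0$; ruling this out requires the full force of the tangent-to-a-tangent principle plus a delicate argument showing that any nontrivial interpolation between two distinct flat tangent measures would itself be a tangent measure whose support meets two planes in sets of positive $\haus{m}$ measure, violating~(2). Once this uniqueness is in hand, the first and third steps are essentially routine.
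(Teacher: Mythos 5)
The paper does not actually prove this statement: it is quoted from \cite[Theorem 5.1]{delellis:rectifiable} ``for the reader convenience'', so the only meaningful comparison is with the standard proof of the criterion (De Lellis, Chapter 5; Mattila, Theorem 16.7). Measured against that, your proposal has a genuine gap at its central step --- and you have correctly located it, but not filled it. The claim that hypothesis (2), Preiss's tangent-of-a-tangent principle, and a ``connectedness-and-closedness argument'' force a single plane $S(x)$ with $\tang^m(\mu,x)=\{\beta(x)\haus{m}\llcorner S(x)\}$ does not follow. The family of flat measures $\{\beta\haus{m}\llcorner S \mid \beta>0,\ S\in G(m,n)\}$ is itself closed, connected, invariant under dilations, and closed under taking tangents at points of the support; consequently a closed, connected, dilation-invariant set of tangent measures can consist entirely of flat measures whose planes sweep out a continuum (the blow-up ``spiralling'' scenario), and no interpolating tangent measure charging two distinct planes need ever appear. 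Pointwise, uniqueness of the tangent plane is simply not a consequence of (1) and (2) at that point; it becomes true $\mu$-a.e.\ only \emph{after} rectifiability is known. Your step 3 also silently requires the plane in the cone condition to be independent of the radius, which is exactly what step 2 fails to deliver; and even then the cone contains only $E_\lambda\cap B_{r}(x)$ (via the lower density bound), not all of $\supp\mu\cap B_r(x)$.

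The standard proof circumvents uniqueness entirely. From (1) and (2) one deduces only that the support is \emph{weakly linearly approximable} at $\mu$-a.e.\ $x$: for every $\e>0$ and all sufficiently small $r$ there is a plane $W=W(x,r)$, allowed to depend on $r$, such that $E_\lambda\cap B_r(x)$ lies within $\e r$ of $x+W$ and, conversely, every point of $(x+W)\cap B_r(x)$ lies within $\e r$ of the support; the lower density bound is what converts flatness of the blow-ups into these two-sided geometric statements. The heart of the theorem is then the geometric Marstrand--Mattila criterion: a projection-and-stopping-time argument showing that a set of positive and finite lower and upper $m$-density which is weakly linearly approximable a.e.\ is $m$-rectifiable. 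That is the part your proposal replaces with the unproved uniqueness claim, and it cannot be reduced to the Lipschitz-graph cover of your step 3. Your localization in step 1 is fine and matches how the standard proof begins.
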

We begin in subsection $\ref{subsec:well_known_facts}$ where we adapt to $\mus$ two well-known facts about measures:
\begin{itemize}
\item in Lemma \ref{lmm:mus_abscont_hausk-1} we show that $\mus\big( \M \- \reg(V) \big)=0$; this allows us to check the conditions of the Marstrand-Mattila criterion just on $\reg(V)$.
\item In Lemma \ref{lmm:tangent_mus_equal_muv} we prove that for $\mus$-a.e.\ $x \in \pM$, $\muv$ and $\mus$ have the same $(k-1)$-tangent measures. 
\end{itemize}

Next, since by definition
\begin{equation*}
0< \tlu(\mus,x) \leq \tuu(\mus,x) < +\infty
\qquad
\mbox{for } \mus \mbox{-a.e. } x \in \pM,
\end{equation*}
to apply Marstrand-Mattila Rectifiability Criterion it remains to show that for $\mus$-a.e.\ $x_0 \in \reg(V)$, every tangent measure to $\muv$ is a $(k-1)$-dimensional plane. Subsection \ref{subsec:proof_second_condition_mm} is devoted to prove this, which is achieved in several steps.
The outline of the proof is the following:
\begin{enumerate}
\item In Lemma \ref{lmm:tangent_cones} we first prove that at every $x_0 \in \reg(V) \cap \pM$, tangent varifolds to $V$ are cones in the half-space $T^+_{x_0}\M$ that are stationary with respect to $\X_t(T^+_{x_0}\M)$.
Thus, if $C\in \tang(V,x_0)$ for some $x_0 \in \reg(V) \cap \pM$, Corollary \ref{cor:bv_free_boundary_varifolds} provides the existence of the measure $\muc$. We also prove that the measures $\muj$
relative to the scalings of $V$ converge weakly to $\muc$; This imply that (Corollary \ref{cor:tangent_mus_are_muc})
\begin{equation}\label{eq:tangent_b_measures_are_boundary}
\tang^{k-1}(\mus,x_0)
=
\{
\muc \mid C \in \tang(V,x_0)
\}
\qquad
\mbox{for } \mus\mbox{-a.e. } x_0 \in \pM.
\end{equation}

\item In Lemma \ref{lmm:max_density_invariance} we prove that each tangent cone $C$ has an invariant linear subspace $D_C$ that coincides with the set of points where the $k$-density $\T^k(\norm{C},\cdot)$ attains its maximum;
\item In Lemma \ref{lmm:approx_continuity_contant_density} we show that for $\mus$-a.e.\ $x_0 \in \pM$, if $C \in \tang(V,x_0)$, then $\muc$ is concentrated on $D_C$;
\item In Lemma \ref{lmm:positive_density_k-1-dimensional} we prove that for $\mus$-a.e.\ $x_0 \in \pM$, if $C \in \tang(V,x_0)$, then $D_C$ is $(k-1)$-dimensional and $\muc = \alpha \haus{k-1}\llcorner D_C$ for some constant $\alpha=\alpha(x_0,C)$;
\item
In subsection \ref{subsec:final} we summarize all these facts to conclude the proof of Theorem \ref{thm:rectifiability_free_boundary}: by Lemma \ref{lmm:positive_density_k-1-dimensional} and \eqref{eq:tangent_b_measures_are_boundary} it follows that, for $\mus$-a.e.\ $x_0 \in \pM$, every $(k-1)$-tangent measure to $\muv$ at $x_0$ is a $(k-1)$-plane.
\end{enumerate}

\subsection{Summary of well-known facts}\label{subsec:well_known_facts}
We begin with a measure-theoretic lemma applied to $\mus$.
\begin{lemma}\label{lmm:mus_abscont_hausk-1}
Let $V \in \V_k(\M)$ has free boundary at $\pM$ with $H \in L^p(\M, \nv)$ for some $p\geq 1$. Then $\mus \ll \haus{k-1}$.
In particular, if $p>1$, we have
\begin{equation}\label{eq:singular_points_mus-negligible}
\mus\big(\pM \- \reg(V) \big)=0
\end{equation}
\end{lemma}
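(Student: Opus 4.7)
The plan is to split the statement into its two assertions and handle each via a short, standard measure-theoretic argument combined with Theorem \ref{thm:singular_set_V_lower_dimensional}.

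First I would establish $\mus \ll \haus{k-1}$ by using the classical comparison between Radon measures and Hausdorff measures via upper densities (e.g.\ \cite[Theorem 6.9]{mattila_1995}). By the very definition \eqref{eq:definition_mus} of $\mus$, the set $E$ on which $\mus$ is supported consists of points where $\tuu(\muv,\cdot)<+\infty$; since $\mus\leq \muv$, this yields $\tuu(\mus,x)\leq \tuu(\muv,x)<+\infty$ for $\mus$-a.e.\ $x$. The standard inequality then gives, for every Borel $A\subset \pM$ and every $t>0$,
\begin{equation*}
\mus\big(A\cap\{\tuu(\mus,\cdot)\leq t\}\big)
\leq
c(n)\, t\, \haus{k-1}(A),
\end{equation*}
so if $\haus{k-1}(A)=0$ then, decomposing $E$ according to the countably many sublevels $\{\tuu(\mus,\cdot)\leq t\}$, we conclude $\mus(A)=0$. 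This proves $\mus\ll\haus{k-1}$.

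For the second assertion, under the hypothesis $p>1$, the exponent $s=k-1$ satisfies $s>k-p$; hence Theorem \ref{thm:singular_set_V_lower_dimensional} applied with this $s$ gives
\begin{equation*}
\haus{k-1}\big(\M\-\reg(V)\big)=0.
\end{equation*}
Combined with the absolute continuity just proved, this yields $\mus\big(\pM\-\reg(V)\big)=0$, which is \eqref{eq:singular_points_mus-negligible}.

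Neither step presents a real obstacle: the comparison lemma between Radon measures of finite upper $s$-density and $\haus{s}$ is classical, and the dimensional estimate on $\M\-\reg(V)$ is exactly what Theorem \ref{thm:singular_set_V_lower_dimensional} provides. The only point that requires care is to make sure we are applying the density-comparison result to $\mus$ (where we control $\tuu$) rather than to $\muv$ directly, and to check the strict inequality $k-1>k-p$ needs $p>1$, which is precisely the hypothesis of the second part of the statement.
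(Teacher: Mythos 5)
Your proposal is correct and follows essentially the same route as the paper: the absolute continuity $\mus \ll \haus{k-1}$ is obtained from \cite[Theorem 6.9]{mattila_1995} by decomposing $E$ into the countable sublevel sets of $\tuu(\mus,\cdot)$, and \eqref{eq:singular_points_mus-negligible} then follows by combining this with \eqref{eq:dimension_singular_points} applied at $s=k-1>k-p$. Your extra remark that one should apply the density comparison to $\mus$ (via $\tuu(\mus,x)\leq\tuu(\muv,x)<\infty$ on $E$) is exactly the care the paper's argument implicitly takes.
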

\begin{proof}
The proof is an easy consequence of \cite[Theorem 6.9]{mattila_1995} which states that if $\mu \in \mep(\R^n)$ is a positive Radon measure and if
\begin{equation*}
A \subset
\{
x \in \R^n \mid \tuu(\mu,x) \leq \lambda
\},
\end{equation*}
then $\mu(A) \leq 2^{k-1}\lambda \haus{k-1}(A)$. Let us assume $\haus{k-1}(A)=0$; since by definition $\tuu(\mus,x)< \infty$ for $\mus$-a.e.\ $x \in \M$ we have that
\begin{equation*}
\begin{split}
\mus(A)
=
\mus \Big(
\bigcup_{i \in \N} A \cap
\{
\tuu(\mus,x)< i
\}
\Big)
\leq
\sum_{i \in \N} i 
2^{k-1}\haus{k-1}(A)
=
0.
\end{split}
\end{equation*}
\eqref{eq:singular_points_mus-negligible} clearly follows by $\mus \ll \haus{k-1}$ and \eqref{eq:dimension_singular_points}.
\end{proof}

Since $\mus$ is concentrated on $E$, we expect that $\tang^{k-1}(\mus,x) = \tang^{k-1}(\muv,x)$ for every point of $E$ that has density $1$ with respect to $\muv$. This is a well-known fact, see e.g. \cite[Remark 3.13]{delellis:rectifiable}, but we recall the proof for the reader convenience.
\begin{lemma}\label{lmm:tangent_mus_equal_muv}
Let $V \in \V_k(\M)$ has free boundary at $\pM$ with $H \in L^p(\M, \nv)$ for some $p\geq 1$, let $\muv$ be the measure provided by Corollary \ref{cor:bv_free_boundary_varifolds} and let $E$ be defined as in \eqref{eq:definition_mus}. Then
\begin{equation}\label{eq:tangent_mus_equal_muv}
\tang^{k-1}(\mus,x_0)
=
\tang^{k-1}(\muv,x_0)
\qquad
\forall x_0 \in E \text{ such that }
\lim_{r \to 0} \frac{\muv \big( E \cap B_r(x) \big)}{\muv \big(B_r(x) \big)}
=
1.
\end{equation}
In particular, the first equality in \eqref{eq:tangent_mus_equal_muv} holds $\mus$-a.e..
\end{lemma}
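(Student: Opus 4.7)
The plan is to exploit that $\mus$ differs from $\muv$ only by discarding the set $\pM\setminus E$, so that at a point where $E$ has $\muv$-density $1$ the two rescaled measures differ by an error that is negligible on every bounded set. Once this is proved, any weakly-$*$ converging subsequence on one side converges on the other to the same limit.

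Fix $x_0\in E$ with $\lim_{r\to 0}\muv(E\cap B_r(x_0))/\muv(B_r(x_0))=1$, and a sequence $r_j\downarrow 0$. Write
\begin{equation*}
\mu_j=\tfrac{1}{r_j^{k-1}}(\dil{x_0,r_j})_\#\muv,\qquad \mu_j^{*}=\tfrac{1}{r_j^{k-1}}(\dil{x_0,r_j})_\#\mus.
\end{equation*}
Since $\mus=\muv\llcorner E$, for every Borel $A\subset\R^n$,
\begin{equation*}
(\mu_j-\mu_j^{*})(A)=\tfrac{1}{r_j^{k-1}}\muv\big(\dil{x_0,r_j}^{-1}(A)\setminus E\big)\geq 0.
\end{equation*}
For $A=B_R$ this gives, after multiplying and dividing by $\muv(B_{Rr_j}(x_0))$,
\begin{equation*}
(\mu_j-\mu_j^{*})(B_R)=R^{k-1}\,\frac{\muv(B_{Rr_j}(x_0))}{(Rr_j)^{k-1}}\cdot\frac{\muv(B_{Rr_j}(x_0)\setminus E)}{\muv(B_{Rr_j}(x_0))}.
\end{equation*}
The first factor stays bounded because $x_0\in E$ implies $\tuu(\muv,x_0)<+\infty$; the second factor tends to $0$ by the density hypothesis. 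Hence $(\mu_j-\mu_j^{*})(B_R)\to 0$ for every $R>0$.

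This uniform vanishing of the total variation of $\mu_j-\mu_j^{*}$ on each ball forces $\int\varphi\,d\mu_j-\int\varphi\,d\mu_j^{*}\to 0$ for every $\varphi\in C_c(\R^n)$ (bound $\varphi$ by $\|\varphi\|_\infty\mathbf{1}_{B_R}$ on a ball containing its support). Consequently $\mu_j\wtoi\nu$ if and only if $\mu_j^{*}\wtoi\nu$, which is exactly the equality $\tang^{k-1}(\muv,x_0)=\tang^{k-1}(\mus,x_0)$. The main technical point, namely that the density assumption compensates the factor $r_j^{-(k-1)}$, is harmless here because $\tuu(\muv,x_0)$ is finite on $E$ by definition.

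For the last assertion, I would invoke the Lebesgue differentiation theorem for Radon measures (see e.g.\ \cite[Corollary 2.14]{mattila_1995}): for every Borel set $E$ and every Radon measure $\muv$,
\begin{equation*}
\lim_{r\to 0}\frac{\muv(E\cap B_r(x))}{\muv(B_r(x))}=1\qquad\text{for }\muv\text{-a.e.\ }x\in E,
\end{equation*}
and a fortiori for $\mus$-a.e.\ $x$, since $\mus=\muv\llcorner E$. Combined with the previous paragraph, this yields the identity $\tang^{k-1}(\mus,x_0)=\tang^{k-1}(\muv,x_0)$ for $\mus$-a.e.\ $x_0$.
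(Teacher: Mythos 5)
Your proof is correct and follows essentially the same route as the paper: both control the rescaled mass of $\muv\llcorner(\R^n\setminus E)$ on bounded sets by multiplying and dividing by $\muv(B_{cr_j}(x_0))$ and using the density-$1$ hypothesis, then pass to the weak-$*$ limits; the only (harmless) variation is that you bound the ratio $\muv(B_{Rr_j}(x_0))/(Rr_j)^{k-1}$ via $\tuu(\muv,x_0)<\infty$ from the definition of $E$, while the paper deduces the same bound from the assumed convergence $\mu_j\wto\nu$. The final $\mus$-a.e.\ statement via Besicovitch differentiation is exactly what the paper leaves implicit.
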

\begin{proof}
Let us fix $x_0 \in \pM$ such that
\begin{equation*}
\lim_{r \to 0} \frac{\muv \big( E \cap B_r(x) \big)}{\muv \big(B_r(x) \big)}
=
\lim_{r \to 0} \frac{\mus \big(B_r(x) \big)}{\muv \big(B_r(x) \big)}
=
1,
\end{equation*}
let $\nu \in \tang^{k-1}(\muv,x_0)$ and $r_j \downarrow 0$ such that
\begin{equation}\label{eq:convergence_tangent_nu}
\muj
:=
\frac{1}{r_j^{k-1}} \big( \dil{x_0,r_j}\big)_\# \muv
\wto
\nu.
\end{equation}
We are going to prove that
\begin{equation}\label{eq:scalings_mus_converge_nu}
\muj^*
:=
\frac{1}{r_j^{k-1}} \big( \dil{x_0,r_j}\big)_\# \mus
\wto
\nu.
\end{equation}
Without loss of generality we can assume $x_0=0$. Let us consider $f \in C_c(\R^n)$. We have
\begin{equation*}
\begin{split}
\int f(x) \dif \nu(x)
=&
\lim_{j \to \infty}
\frac{1}{r_j^{k-1}}
\int f \Big( \frac{x}{r_j}\Big) \dif \muv(x)
\\
=&
\lim_{j \to \infty}
\frac{1}{r_j^{k-1}}
\left(
\int_E f \Big( \frac{x}{r_j}\Big) \dif \muv(x)
+
\int_{\R^n \- E} f \Big( \frac{x}{r_j}\Big) \dif \muv(x)
\right).
\end{split}
\end{equation*}
For the last term there exists $c>0$ such that
\begin{equation*}
\begin{split}
\frac{1}{r_j^{k-1}}
\int_{\R^n \- E} \Big| f \Big( \frac{x}{r_j}\Big) \Big| \dif \muv(x)
& \leq
\frac{c}{r_j^{k-1}} \muv\big( B_{cr_j} \- E \big)
\\
& \leq
c \frac{\muv \big( B_{cr_j} \big)}{r_j^{k-1}}
\frac{\muv\big( B_{cr_j} \- E \big)}{\muv \big( B_{cr_j} \big)}
\xrightarrow[j \to \infty]{}
0
\end{split}
\end{equation*}
since $E$ has density $1$ with respect to $\muv$ and $\muv( B_{cr_j})/r_j^{k-1}$ remains bounded by $\muj \wtoi \nu$. Hence
\begin{equation*}
\begin{split}
\int f(x) \dif \nu(x)
=&
\lim_{j \to \infty}
\frac{1}{r_j^{k-1}}
\int f \Big( \frac{x}{r_j}\Big) \dif \mus(x),
\end{split}
\end{equation*}
thus
\begin{equation*}
\muj^*
=
\frac{1}{r_j^{k-1}} \big( \dil{0,r_j}\big)_\# \mus
\wto
\nu.
\end{equation*}
This proves $\tang^{k-1}(\muv,x_0) \subset \tang^{k-1}(\mus,x_0)$; the reverse inclusion can be proved in a similar way.
\end{proof}

\subsection{Proof of the second condition of the Marstrand-Mattila Criterion}\label{subsec:proof_second_condition_mm}
We begin by studying tangent varifolds to $V$ in a point $x \in \pM$.

\begin{lemma}\label{lmm:tangent_cones}
Let $V \in \V_k(\M)$ has free boundary at $\pM$ with $H \in L^p(\M, \nv)$ for some $p\geq1$. Then,
for every $x_0 \in \reg(V) \cap \pM$
the following statements hold: let $r_j \downarrow 0$ be fixed and let us use the notations for the scalings defined on p. \pageref{not:scalings}; then
\begin{enumerate}
\item every $V_j$ has free boundary at  $\pM_j$ with $\tilde{H}_j(x)=r_j \tilde{H}(r_j x)$ for every $x \in \pM_j$ and
\begin{equation}\label{eq:scaled_mu_is_muj}
\muvj
=
\frac{1}{r_j^{k-1}}(\dil{0,r_j})_\# \muv;
\end{equation}

\item there exist a subsequence of $r_j$, not relabeled, and a $k$-varifold $C$ with $\supp C \subset \W$ such that
\begin{equation*}
V_j \wtoi_{j \to \infty} C.
\end{equation*}
In particular, $\tang(V,x_0) \neq \emptyset$.
\item $C$ is stationary with respect to $\X_t(T_{x_0}^+ \M)$;

\item if $\muc$ is the measure given by Corollary \ref{cor:bv_free_boundary_varifolds} relative to $C$, we have
\begin{equation}\label{eq:scalings_muv_to_muc}
\muj
:= \muvj
\wtoi_{j \to \infty}
\muc;
\end{equation}

\item If in addition $\tkv{x} \geq 1$ for $\nv$-a.e.\ $x \in \M$ (that is if $V$ satisfies Assumption \ref{ass:varifold_rectifiability}), then $C$ is a rectifiable cone and $\muc$ is scaling invariant, that is
\begin{equation*}
\frac{1}{r^{k-1}}(\dil{0,r})_\# \muc
=
\muc
\qquad
\forall r>0.
\end{equation*}
\end{enumerate}
\end{lemma}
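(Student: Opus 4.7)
The plan is to establish the five conclusions in the order listed, each building on the earlier ones.

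\textbf{Parts (1)--(2).} Part (1) is a direct computation from the push-forward formula \eqref{eq:push-forward_varifolds}: the affine dilation $\dil{x_0,r_j}$ satisfies $J_S\dil{x_0,r_j}\equiv r_j^{-k}$, so $\norm{V_j}=r_j^{-k}(\dil{x_0,r_j})_{\#}\nv$, and plugging a test field into \eqref{eq:total_fvf_varifold_free_boundary} for $V_j$ and performing the change of variables produces the analogous identity with $H_j(y)=r_j H(x_0+r_j y)$, $\tilde H_j(y)=r_j\tilde H(x_0+r_j y)$, and $\muj=r_j^{1-k}(\dil{x_0,r_j})_{\#}\muv$. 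For Part (2), Theorem \ref{thm:singular_set_V_lower_dimensional} (applicable because $x_0\in\reg(V)$) gives the uniform mass bound $\norm{V_j}(B_R)\leq C R^k$, so Banach--Alaoglu produces a weak-$*$ subsequential limit $C$. Since the signed distance functions from $\pM_j$ converge in $C^2_{\loc}$ to the signed distance from $T_{x_0}\pM$, the domains $\M_j$ converge locally to $\W$, and hence $\supp C\subset\W$.

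\textbf{Part (3).} Given $X\in\X_t(\W)$ compactly supported, I approximate $X$ by fields $X_j\in\X_t(\M_j)$ with $X_j\to X$ in $C^1_{\loc}$, obtained by projecting $X$ onto $T_y\pM_j$ in a tubular neighborhood via the $C^2_{\loc}$ convergence $\pM_j\to T_{x_0}\pM$. The free-boundary identity reads $\int \dive_S X_j\,\dif V_j = -\int\scal{X_j}{H_j}\dif\norm{V_j}$. The left-hand side converges to $\int\dive_S X\,\dif C$ by weak-$*$ convergence of $V_j$ and uniform $C^1$ approximation of $X_j$. For the right-hand side, the curvature scaling and $x_0\in A_s$ for some $s\in(k-p,k]$ yield
\[
\int_{B_R}|H_j|^p\,\dif\norm{V_j}
= r_j^{p-k}\int_{B_{R r_j}(x_0)}|H|^p\,\dif\nv
= R^s\, o(r_j^{\,s+p-k})\xrightarrow[j\to\infty]{}0
\]
since $s+p-k>0$, and $\normi{\tilde H_j}\le r_j\normi{\tilde H}\to 0$, so H\"older's inequality forces $\delta C(X)=0$.

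\textbf{Part (4).} The local estimate \eqref{eq:estimate_measure_boundary} together with the mass bound from Part (2) yields $\muj(B_R)\leq C(R)$, so a further subsequence converges weakly-$*$ to a positive Radon measure $\mu_\infty$ on $T_{x_0}\pM$. To identify $\mu_\infty=\muc$, I test \eqref{eq:total_fvf_varifold_free_boundary} for $V_j$ against suitable approximations $X_j$ of a field $X\in\mathfrak{X}_\perp(\W)$: the divergence term converges to $\int\dive_S X\,\dif C$; the bulk curvature terms vanish exactly as in Part (3); and $\int\scal{X_j}{N_j}\dif\muj\to\int\scal{X}{N}\dif\mu_\infty$ because $N_j\to N$ uniformly on compacta. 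Comparison with the identity satisfied by $C$ provided by Corollary \ref{cor:bv_free_boundary_varifolds} and Part (3) forces $\mu_\infty=\muc$, and uniqueness of the limit upgrades this to convergence along the subsequence fixed in Part (2).

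\textbf{Part (5) and the main obstacle.} Assumption \ref{ass:varifold_rectifiability} and Theorem \ref{thm:singular_set_V_lower_dimensional} give the existence and finiteness of $\T^k(\nv,x_0)$, hence $\norm{V_j}(B_r)/r^k\to\omega_k\T^k(\nv,x_0)$ for every $r>0$, which transfers via weak-$*$ convergence to $\norm{C}(B_r)/r^k=\omega_k\T^k(\nv,x_0)$ for a.e.\ $r>0$. Since the radial field $X(y)=y$ is tangent to $\W$ (as $T_{x_0}\pM$ is a linear subspace through the origin), Part (3) yields the classical monotonicity identity for $C$, and constancy of the density ratio forces the radial derivative terms to vanish, i.e.\ $C$ is a $k$-cone. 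Rectifiability then follows from Allard's rectifiability theorem, since $C$ has locally bounded first variation (Corollary \ref{cor:bv_free_boundary_varifolds}) and $\tuk(\norm{C},y)\geq 1$ for $\norm{C}$-a.e.\ $y$ by lower-semi-continuity of densities under weak convergence combined with $\T^k(\nv,\cdot)\geq 1$. Finally, applying Part (1) to the cone $C$ and using $(\dil{0,\lambda})_\sharp C=C$ gives $\muc=\lambda^{1-k}(\dil{0,\lambda})_{\#}\muc$. The principal obstacle will be in Part (3): the approximations $X_j$ must remain exactly tangent to $\pM_j$ while $C^1_{\loc}$-converging to $X$, since any drift away from tangency would introduce boundary contributions weighted by $\muj$ that need not vanish and would break the stationarity conclusion.
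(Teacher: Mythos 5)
Your proposal is correct and follows essentially the same route as the paper: change of variables for (1), compactness from the density-ratio bound of Theorem \ref{thm:singular_set_V_lower_dimensional} for (2), exactly-tangent approximations plus the curvature decay $r_j^{p-k}\int_{B_{Rr_j}}|H|^p\dif\nv\to 0$ for (3), testing the first-variation identity together with the uniform bound on $\muj(B_1)$ for (4), and the compactness theorem for rectifiable varifolds plus the monotonicity identity for (5). The one point to tighten is the order of argument in Part (5): the vanishing of the radial terms only yields homogeneity of the mass measure $\norm{C}$ (via $0$-homogeneous test functions), and one needs the rectifiability of $C$ --- which you do establish --- before upgrading this to $(\dil{0,\lambda})_\sharp C=C$ as varifolds, which is the order the paper uses.
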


\begin{proof}
To simplify the notations, we assume without loss of generality that $x_0=0 \in \pM \cap \reg(V)$ and $T_0^+\M=\{x_n \geq 0\}$.
\begin{steps}[wide,%
labelindent=5pt]
\item
We first prove that every $V_j$ has free boundary at $\pM_j$. For every $X \in \X_t(\M_j)$ the vector field $x \mapsto X(\frac{x}{r_j})$ belongs to $\Xt$, thus we can apply \eqref{eq:fvf_free_boundary_intro} to $V$ and get
\begin{equation*}
\begin{split}
\int_{G_k(\M_j)} \dive_S X(x) \dif V_j(x,S)
= &
\frac{1}{r_j^{k-1}} \int_{G_k(\M)} \dive_S X\big(\frac{x}{r_j}\big) \dif V(x,S)
\\
= &
- \frac{1}{r_j^{k-1}}\int_\M \scal{X\big(\frac{x}{r_j}\big)}{H(x)} \dif \nv(x)
.
\end{split}
\end{equation*}
Thus, if we define $H_j(x)=r_j H(r_j x)$ for all $x \in \M_j$, by changing again variables we get
\begin{equation*}
\int_{G_k(\M_j)} \dive_S X(x) \dif V_j(x,S)
= -
\int_{\M_j} \scal{X(x)}{H_j(x)} \dif \norm{V_j}(x),
\end{equation*}
that is $V_j$ has free boundary at $\pM_j$, with $H_j(x)=r_j H(r_j x)$. By Corollary \ref{cor:bv_free_boundary_varifolds}, $V_j$ has bounded first variation and there exists $\muj= \muvj$ and $\tilde{H_j}$. By a similar argument and by definition of $\tilde{H}$ it easily seen that
\begin{equation*}
\tilde{H_j}(x)
=
r_j\tilde{H}(r_j x)
\qquad
\mbox{ for }
\norm{V_j}
\mbox{-a.e. }
x \in \pM_j.
\end{equation*}
If $X \in \X(\M_j)$, then
\begin{equation*}
\begin{split}
\int_{G_k(\M_j)} \dive_S X(x) \dif V_j(x,S)
= &
\frac{1}{r_j^{k-1}} \int_{G_k(\M)} \dive_S X\big(\frac{x}{r_j}\big) \dif V(x,S)
\\
= &
- \frac{1}{r_j^{k-1}}\int_\M \scal{X\big(\frac{x}{r_j}\big)}{H(x) +\tilde{H}(x)} \dif \nv(x)
\\
& +
\frac{1}{r_j^{k-1}} \int_{\pM} \scal{X\big(\frac{x}{r_j}\big)}{N(x)} \dif \muv(x)
\\
= &
- \int_{\M_j} \scal{X}{H_j +\tilde{H}_j} \dif \norm{V_j}
+
\frac{1}{r_j^{k-1}} \int_{\pM_j} \scal{X}{N_j} \dif (\dil{0,r_j})_\#\muv
\end{split}
\end{equation*}
If we compare this equality with \eqref{eq:total_fvf_varifold_free_boundary} for $V_j$ tested with $X$, we get
\begin{equation*}
\muj
:=
\sigma_{V_j}
=
\frac{1}{r_j^{k-1}}(\dil{0,r_j})_\# \muv,
\end{equation*}
that is $\muj$ is obtained by scaling $\muv$.
\item
We now want to study the limit of the sequence $V_j$.
Since $0 \in \reg(V)$, by Theorem \ref{thm:singular_set_V_lower_dimensional} there exists a constant $c>0$ such that
\begin{equation}\label{eq:masses_scalings_unif_bounded}
\norm{V_j}(B_1)
=
\norm{(\dil{0,r_j})_\sharp V} \big( B_1)
=
\frac{\norm{V}\big( B_{r_j} \big)}{r_j^k}
\leq
c
\qquad
\forall j \in \N.
\end{equation}
Thus by compactness of Radon measures, there exist $C \in \V_k(\R^n)$ and a subsequence of $r_j$, not relabeled, such that
\begin{equation}\label{eq:Vj_to_C}
V_j
\wtoi_{j \to \infty}
C.
\end{equation}
\item
Clearly $\supp C \subset \W$. To show that $C$ is stationary with respect to $\X_t(\W)$, we test with a vector field $X \in \X_t(\W)$. We first need the following estimate on $H_j$ (exactly the same relation holds for $\tilde{H}_j$):
\begin{equation}\label{eq:estimate_curvature_blow-up}
\norm{H_j}_{L^p(B_1)}
=
\left( \int_{B_1} |H_j|^p \dif \norm{V_j} \right)^{\frac{1}{p}}
=
\left( \frac{1}{r_j^{k-p}} \int_{B_{r_j}} |H|^p \dif \norm{V} \right)^{\frac{1}{p}}
\xrightarrow[j \to \infty]{}
0.
\end{equation}
where in the second equality we have used $H_j(x)=r_j H(r_j(x))$ and the limit follows by $0 \in \reg(V)$ and $|H| \leq |H + \tilde{H}|$ because $H(x) \in T_x \pM$ (Remark \ref{rem:fb_curvature_tangent}) while $\tilde{H} \in (T_x \pM)^\perp$.

To prove that $C$ is stationary, let us pick $X \in \X_t(\W)$ with compact support; there exists a sequence of vector fields $X_j \in \X_t(\M_j)$ with compact support such that $X_j \to X$ in the $C^1$ topology. Hence, using \eqref{eq:Vj_to_C} and \eqref{eq:estimate_curvature_blow-up} we get
\begin{equation*}
\begin{split}
\int_{G_k(\W)} \dive_S X(x) \dif C(x,S)
= &
\lim_{j \to \infty}
\int_{G_k(\M_j)} \dive_S X_j(x) \dif V_j(x,S)
\\
= &
- \lim_{j \to \infty} \int_{\M_j} \scal{X_j(x)}{H_j(x)} \dif \norm{V_j}(x)
\\
= &
\, 0
\end{split}
\end{equation*}
that is, by definition, $C$ is stationary with respect to $\X_t(\W)$.
\item
Corollary \ref{cor:bv_free_boundary_varifolds} provides that $C$ has bounded first variation and the existence of $\muc$. We want to prove that
\begin{equation}\label{eq:bmeasures_convergence_blow-up}
\muj \wtoi_{j \to \infty} \muc.
\end{equation}
To this aim, we first remark that $\supp \muc  \subset T_0 \pM$ and, since 
$T_0 \pM$ is flat, we have that $\tilde{H}_C=0$.
We next need an uniform bound on $\muj(B_1)$ and we use \eqref{eq:estimate_measure_boundary}: since the second fundamental forms of $\pM_j$ go to $0$ as $r_j \to 0$, when we apply \eqref{eq:estimate_measure_boundary} to $V_j$ in $B_1$, the constant $c$ in \eqref{eq:estimate_measure_boundary} is bounded uniformly in $j$. This implies that there exists an uniform constant $c$ such that for each $j \in \N$
\begin{equation}\label{eq:uniform_bound_mu_blow-up}
\begin{split}
\muj (B_1)
= &
\frac{\muv(B_{r_j})}{r_j^{k-1}}
\\
\leq &
\frac{c \nv(B_{2r_j})}{r_j^k}
+
\frac{1}{r_j^{k-1}}\int_{B_{2r_j}}|H| \dif \nv.
\\
\leq &
\frac{c \nv(B_{2r_j})}{r_j^k}
+
r_j^{1 - \frac{k-s}{p}}\bigg( \frac{1}{r_j^s}\int_{B_{2r_j}}|H|^p \dif \nv \bigg)^{\frac{1}{p}}
\bigg(\frac{\nv(B_{2r_j})}{r_j^k} \bigg)^{1- \frac{1}{p}}
\end{split}
\end{equation}
where $s \in (k-p,k]$ is such that $0$ satisfies the $s$-density condition in the definition of $\reg(V)$.
The last member of \eqref{eq:uniform_bound_mu_blow-up} is uniformly bounded in $j$ since $0 \in \reg(V)$, Theorem \ref{thm:singular_set_V_lower_dimensional} and by $\frac{k-s}{p} \leq 1$. This proves the uniform bound on $\muj(B_1)$.

To complete the proof of \eqref{eq:bmeasures_convergence_blow-up}, let $X \in \X(\R^n)$ be a vector field with compact support. If $e_n$ is the $n$-th coordinate unit vector (that is the interior unit normal vector to $\partial \W$), we have
\begin{equation}\label{eq:limit_test_b_measures}
\begin{split}
- \int_{T_0 \partial \M} \scal{X}{e_n} \dif \muc
= &
\int_{G_k(\W)} \dive_S X(x) \dif C(x,S)
\\
= &
\lim_{j \to \infty}
\int_{G_k(\M_j)} \dive_S X(x) \dif V_j(x,S)
\\
= &
- \lim_{j \to \infty} \left( \int_{\M_j} \scal{X(x)}{H_j(x) + \tilde{H_j}(x)} \dif \norm{V_j}(x)
+
\int_{\pM_j} \scal{X(x)}{N_j(x)} \dif \muj(x)
\right)
\\
= &
\lim_{j \to \infty} \int_{\pM_j} \scal{X(x)}{N_j(x)} \dif \muj(x),
\end{split}
\end{equation}
where the first identity follows by $H_C=\tilde{H}_C=0$ and the last one follows by \eqref{eq:estimate_curvature_blow-up}.
Thus $N_j \muj \wtoi -e_n \muc$ is proved, since $\X(\R^n)$ is dense in $C_c(\R^n, \R^n)$, by \eqref{eq:limit_test_b_measures} and the uniform bound on $\muj(B_1)$ \eqref{eq:uniform_bound_mu_blow-up}.
To prove \eqref{eq:bmeasures_convergence_blow-up}, it is enough to observe that, if $f \in C_c(\R^n, \R)$, then
\begin{equation*}
\int f \dif \muc
=
\int \scal{f e_n}{e_n}  \dif \muc
=
- \lim_{j \to \infty} \int \scal{f e_n}{N_j} \dif \muj
=
\lim_{j \to \infty} \int f \dif \muj,
\end{equation*}
by the uniform bound on $\muj(B_1)$ and since $\pM_j \to \partial \W$ in the $C^1$ topology.
\item
We are left to prove that, if  $\tkv{x} \geq 1$ for $\nv$-a.e.\ $x \in \M$, then $C$ is a rectifiable cone. The scaling invariance of $\muc$ will be an easy consequence of this.
We first claim that $C$ is rectifiable. In fact, we have:
\begin{itemize}
\item $\tk(\norm{V_j},x) \geq 1 $ for $\norm{V_j}$-a.e.\ $x \in \M_j$, since $\tkv{x} \geq 1$ for $\nv$-a.e.\ $x \in \M$;
\item $\sup_{j \in \N} \norm{V_j}(B_1) < \infty$ by \eqref{eq:masses_scalings_unif_bounded};
\item the $V_j$'s have locally uniformly bounded first variations because $0 \in \reg(V)$, by \eqref{eq:estimate_curvature_blow-up} and the uniform bound on \eqref{eq:uniform_bound_mu_blow-up}.
\end{itemize}
Thus we can apply to the sequence $V_j$ the compactnes theorem for rectifiable varifolds \cite[Theorem 42.7]{simon:lectures}, which proves that $C$ is rectifiable and that $\tk(\norm{C},x) \geq 1$ for $\norm{C}$-a.e.\ $x \in \M$.
In particular, there exists a $k$-rectifiable set $\Gamma \subset \W$ such that
\begin{equation*}
C = \theta(x) \haus{k}\llcorner \Gamma
\end{equation*}
with $\theta(x) = \T^k(\norm{C},x) \geq 1$ for $\haus{k}$-a.e.\ $x \in \Gamma$.

It remains to show that $C$ is a cone.
The argument is exactly the same as \cite[Theorem 19.3]{simon:lectures}, but we recall it for the reader convenience.
Since $C$ is rectifiable, to prove
\begin{equation}\label{eq:tangent_varifolds_are_cones}
(\dil{0,\lambda})_\sharp C
=
C
\qquad
\forall \lambda >0,
\end{equation}
(that is the fact that $C$ is a cone), it is enough to show that
\begin{equation}\label{eq:homogeneity_density_cone}
\T^k(\norm{C},\lambda x)
=
\T^k(\norm{C},x)
\qquad
\forall x \in \R^n,
\forall \lambda >0,
\end{equation}
that is $\theta$ is homogeneous of degree $0$. This is clearly implied by
\begin{equation}\label{eq:homogeneous_mass_cone}
\norm{C}(\lambda A)
=
\lambda^k \norm{C}(A)
\qquad
\forall A \subset \R^n \mbox{ Borel, }
\forall \lambda >0.
\end{equation}
By approximation, it is enough to prove that, for every $0$-homogeneous function $h \in C^1(\R^n)$, one has
\begin{equation}\label{eq:invariance_scalings_homogeneous}
\frac{\dif}{\dif \lambda}
\left(
\frac{1}{\lambda^k}
\int_{\M} \gamma\Big( \frac{|x|}{\lambda}\Big) h(x)
\dif \nv(x)
\right)
=
0
\qquad
\forall \lambda>0,
\end{equation}
where $\gamma$ is the cut-off function defined in Section \ref{sec:notations}.
To this aim, if $\lambda>0$ is such that $\norm{C}(\partial B_\lambda)=0$, we observe that
\begin{equation*}
\begin{split}
\frac{\norm{C}(B_\lambda)}{\lambda^k}
=
\lim_{j \to \infty}
\frac{\norm{V_j}(B_\lambda)}{\lambda^k}
=
\lim_{j \to \infty} \frac{\nv(B_{\lambda r_j})}{ \lambda^k r_j^k}
=
\T^k(\nv,0).
\end{split}
\end{equation*}
Since there exists at most a countable number of radii $\lambda_j$ such that $\norm{C}(\partial B_{\lambda_j})>0$, by approximation it follows that
\begin{equation}\label{eq:constant_density_cone}
\frac{\norm{C}(B_\lambda)}{\lambda^k}
=
\T^k(\nv,0)
\qquad
\forall \lambda>0.
\end{equation}
If we write the monotonicity identity \eqref{eq:monotonicity_identity_fb} for $C$, since $H_C = \tilde{H_C}=0$ and $\partial( \W)$ is flat, integrating between $\sigma, \lambda$ we obtain
\begin{equation}\label{eq:monotonicity_identity_cone}
\begin{split}
\frac{1}{\lambda^k}
\int_{W} \gamma\Big( \frac{|x|}{\lambda}\Big)
\dif \norm{C}(x)
& - 
\frac{1}{\sigma^k}
\int_{W} \gamma\Big( \frac{|x|}{\sigma}\Big)
\dif \norm{C}(x)
\\
&
\qquad
=
\frac{1}{\lambda^{k}} \int_{G_k(W)} \gamma\Big( \frac{|x|}{\lambda}\Big) \bigl| P_{S^\perp}\nabla |x| \bigr|^2 \dif C(x,S)
\\
&
\qquad -
\frac{1}{\sigma^{k}} \int_{G_k(W)} \gamma\Big( \frac{|x|}{\sigma}\Big) \bigl| P_{S^\perp}\nabla |x| \bigr|^2 \dif C(x,S)
\\
&
\qquad +
 \int_{G_k(W)}\bigl| P_{S^\perp}\nabla |x| \bigr|^2
 \left(
 \int_\sigma^\lambda \frac{k}{\rho^{k+1}} \gamma\Big( \frac{|x|}{\rho}\Big) \dif \rho
 \right)
 \dif C(x,S)
\end{split}
\end{equation}
Letting $\gamma$ increase to $\ind_{[0,1)}$ in \eqref{eq:monotonicity_identity_cone}, by dominated convergence  and \eqref{eq:constant_density_cone} we get
\begin{equation}\label{eq:monotonicity_cone}
\begin{split}
0
=
\frac{\norm{C}(B_\lambda)}{\lambda^k}
-
\frac{\norm{C}(B_\sigma)}{\sigma^k}
=
\int_{G_k(B_\lambda \setminus B_\sigma)}\frac{\bigl| P_{S^\perp}\nabla |x| \bigr|^2}{|x|^k} \dif C(x,S).
\end{split}
\end{equation}
Since the last term is non-negative, we have
\begin{equation}\label{eq:projection_radial_zero}
\bigl| P_{S^\perp}\nabla |x| \bigr|^2
=
\Bigl| P_{S^\perp} \frac{x}{|x|} \Bigr|^2
=
0
\qquad
\mbox{for } C\mbox{-a.e. } (x,S) \in G_k(\W),
\end{equation}
that is
\begin{equation}\label{eq:cone_radial_planes}
C \big( (x,S) \mid P_S(x) \neq x \big)
=
C \big( (x,S) \mid x \notin S \big)
=
0.
\end{equation}
If $h$ is $0$-homogeneous, then $\scal{\nabla h(x)}{x}=0$ and \eqref{eq:cone_radial_planes} implies
\begin{equation}\label{eq:cone_homogeneous_function}
\scal{ \nabla h (x) }{P_S \, x}
=
\scal{ \nabla h (x) }{x}
=0
\qquad
\mbox{for } C\mbox{-a.e. } (x,S) \in G_k(\W).
\end{equation}
Since $C$ has free boundary at $\partial \W$, by testing  \eqref{eq:generalized_mean_curvature} for $C$ with $X(x)=h(x) \gamma\Big( \frac{|x|}{\lambda}\Big) x \in \X_t(\W)$, one obtains
\begin{equation*}
\begin{split}
\frac{\dif}{\dif \lambda}
\left(
\frac{1}{\lambda^k}
\int_{\M} \gamma\Big( \frac{|x|}{\lambda}\Big) h(x)
\dif \nv(x)
\right)
= &
-
\frac{1}{\lambda^{k+1}}
\int_{G_k(\W)} \dive_S X(x) \dif V(x,S)
\\
& +
\frac{1}{\lambda^{k+1}}
\int_{G_k(\W)} \gamma\Big( \frac{|x|}{\lambda}\Big)
\scal{ \nabla h (x) }{P_S \, x}
\dif V(x,S)
\\
= &
\,
0,
\end{split}
\end{equation*}
where the last equality follows by $H_C =0$ and \eqref{eq:cone_homogeneous_function}. This proves \eqref{eq:invariance_scalings_homogeneous}, thus $C$ is a cone. The scaling invariance of $\muc$ is a trivial consequence of \eqref{eq:total_fvf_varifold_free_boundary} applied to $C$ and \eqref{eq:tangent_varifolds_are_cones}.
\end{steps}
\end{proof}

Before of going on, we highlight
an easy consequence of Lemma \ref{lmm:mus_abscont_hausk-1}, Lemma \ref{lmm:tangent_mus_equal_muv} and Lemma \ref{lmm:tangent_cones}.
\begin{corollary}\label{cor:tangent_mus_are_muc}
Let $V \in \V_k(\M)$ has free boundary at $\pM$ with $H \in L^p(\M, \nv)$ for some $p > 1$ and let $\muv$ be the measure provided by Corollary \ref{cor:bv_free_boundary_varifolds}. Then
\begin{equation}\label{eq:tangent_mus_are_boundary2}
\tang^{k-1}(\mus,x_0)
=
\{
\muc \mid C \in \tang(V,x_0)
\}
\qquad
\mbox{for } \mus\mbox{-a.e. } x_0 \in \pM.
\end{equation}
\end{corollary}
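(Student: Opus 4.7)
The plan is to combine the three preceding lemmas. Since $p>1$, Lemma \ref{lmm:mus_abscont_hausk-1} gives $\mus(\pM\setminus\reg(V))=0$, and Lemma \ref{lmm:tangent_mus_equal_muv} gives $\tang^{k-1}(\mus,x_0)=\tang^{k-1}(\muv,x_0)$ for $\mus$-a.e.\ $x_0\in\pM$. Hence it suffices to prove
\begin{equation*}
\tang^{k-1}(\muv,x_0)
=
\{\muc \mid C \in \tang(V,x_0)\}
\qquad
\forall x_0 \in \reg(V)\cap\pM.
\end{equation*}

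For the inclusion ``$\supseteq$'', fix $x_0 \in \reg(V)\cap\pM$ and $C \in \tang(V,x_0)$; by definition there is $r_j\downarrow 0$ such that $V_j=(\dil{x_0,r_j})_\sharp V \wto C$. Lemma \ref{lmm:tangent_cones}(1)-(4) then applies: the measures $\muj=\frac{1}{r_j^{k-1}}(\dil{x_0,r_j})_\#\muv$ converge weakly-$\ast$ to $\muc$. Since $\muj$ is precisely the $(k-1)$-scaling of $\muv$ at $x_0$ used to define $\tang^{k-1}(\muv,x_0)$, we conclude $\muc\in\tang^{k-1}(\muv,x_0)$.

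For the reverse inclusion, let $\nu\in\tang^{k-1}(\muv,x_0)$ and pick a sequence $r_j\downarrow 0$ such that $\muj\wto\nu$. Since $x_0\in\reg(V)$, the bound \eqref{eq:masses_scalings_unif_bounded} gives $\sup_j\norm{V_j}(B_R)<\infty$ for every $R>0$, so by Banach--Alaoglu we can extract a subsequence (not relabeled) and find $C\in\V_k(\W)$ with $V_j\wto C$; in particular $C\in\tang(V,x_0)$. Applying Lemma \ref{lmm:tangent_cones}(4) along this subsequence yields $\muj\wto\muc$. By uniqueness of the weak-$\ast$ limit, $\nu=\muc$, which completes the proof.
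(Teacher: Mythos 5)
Your proof is correct and follows the same route as the paper: the paper's own argument simply cites Lemma \ref{lmm:mus_abscont_hausk-1}, Lemma \ref{lmm:tangent_mus_equal_muv} and Lemma \ref{lmm:tangent_cones} and asserts the equality $\tang^{k-1}(\muv,x_0)=\{\muc \mid C \in \tang(V,x_0)\}$ on $\reg(V)\cap\pM$ "clearly follows", whereas you spell out the two inclusions (including the compactness/uniqueness-of-limit argument for the reverse one). No gaps.
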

\begin{proof}
By Lemma \ref{lmm:tangent_cones} it clearly follows that for each $x_0 \in \reg(V) \cap \pM$,
\begin{equation*}
\tang^{k-1}(\muv,x_0)
=
\{
\muc \mid C \in \tang(V,x_0)
\}.
\end{equation*}
By Lemma \ref{lmm:mus_abscont_hausk-1} and Lemma \ref{lmm:tangent_mus_equal_muv} we have the conclusion.
\end{proof}

It is well-known that, for a cone, points with maximal density form a linear subspace and that the cone is invariant by translation with respect these points (see e.g. \cite[Section 3.3]{simon1996theorems} and \cite[Theorem 3.1, Example (4) of Section 4]{White1997}). We report here the simple proof of this fact for the sake of completeness.

\begin{lemma}\label{lmm:max_density_invariance}
Let $V$ satisfy Assumption \ref{ass:varifold_rectifiability}, let $x_0 \in \reg(V) \cap \pM$ be fixed and let $C$ be a tangent cone to $V$ at $x_0$. Then the set
\begin{equation*}
D_C
=
\{y \in T_{x_0} \pM \mid \T^k(\norm{C},y) = \T^k(\norm{C},0)\}
\end{equation*}
is a linear subspace of $\R^{n}$. Moreover the translated cone $(\trasl_{y})_\sharp C$  coincides with $C$ for all $y \in D_C$. In particular, if $\muc$ is the measure given by Corollary \ref{cor:bv_free_boundary_varifolds} relative to $C$, then $(\trasl_{y})_\# \muc = \muc$.

\end{lemma}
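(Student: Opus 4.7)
Without loss of generality we may assume $x_0 = 0$, so that $T_{x_0}^+\M$ becomes $\W$. The plan is to adapt the classical argument showing that the set of maximal-density points of a stationary cone is a translation-invariant linear subspace (see \cite[Section 3.3]{simon1996theorems} and \cite[Theorem 3.1, Example (4)]{White1997}) to the free-boundary setting. From Lemma \ref{lmm:tangent_cones} we already know that $C$ is a rectifiable cone in $\W$, stationary with respect to $\X_t(\W)$, with $H_C = 0$ and $\tilde H_C = 0$ (the latter because $\partial \W$ is flat).

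First I would verify that $0$ is a point of maximal density. For any $y \in \W$, the monotonicity of $\rho \mapsto \rho^{-k}\|C\|(B_\rho(y))$ (Corollary \ref{cor:monotonicity_formula_infty} applied with origin at $y$, the correction term being trivial because $H_C = \tilde H_C = 0$ and $\partial \W$ is flat), combined with the inclusion $B_\rho(y)\subset B_{\rho+|y|}(0)$ and the cone identity $\|C\|(B_\rho(0)) = \omega_k\T^k(\|C\|,0)\rho^k$ from Lemma \ref{lmm:tangent_cones}(5), yields $\T^k(\|C\|,y) \le \T^k(\|C\|,0)$ by letting $\rho\to\infty$.

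Next, for $y \in D_C$, equality in the above chain forces $\rho \mapsto \rho^{-k}\|C\|(B_\rho(y))$ to be constant in $\rho$, and the equality case of the monotonicity identity at $y$ (the analogue of \eqref{eq:monotonicity_cone} centered at $y$; the boundary term drops out since $x-y$ is tangent to $\partial\W$ whenever $x,y \in \partial\W$, and the mean-curvature terms are absent since $\tilde H_C = H_C = 0$) gives $|P_{S^\perp}(x-y)|^2 = 0$ for $C$-a.e.\ $(x,S)$. Repeating the argument in the proof of Lemma \ref{lmm:tangent_cones}(5) with $0$-homogeneous functions recentered at $y$ then shows that $C$ is also a cone with vertex $y$, namely $(\dil{y,r})_\sharp C = C$ for every $r>0$. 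Since $C$ is now a cone at both $0$ and $y$, the elementary identity $\dil{y,r}\circ\dil{0,1/r} = \trasl_{-y/r}$ gives $(\trasl_{-y/r})_\sharp C = C$ for every $r>0$; varying $r$ and inverting push-forwards extends this to $(\trasl_{\lambda y})_\sharp C = C$ for every $\lambda \in \R$.

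From this translation invariance one deduces immediately that $D_C$ is a linear subspace: $(\trasl_{\lambda y})_\sharp C = C$ gives $\lambda y \in D_C$ for every $\lambda \in \R$, and for $y,z \in D_C$ the composition $\trasl_{y}\circ\trasl_{z} = \trasl_{y+z}$ gives $y+z \in D_C$. Finally, since Corollary \ref{cor:bv_free_boundary_varifolds} determines $\muc$ uniquely from the translation-equivariant first variation formula \eqref{eq:total_fvf_varifold_free_boundary}, the equality $(\trasl_y)_\sharp C = C$ immediately implies $(\trasl_y)_\# \muc = \muc$. The main obstacle is the second step: carefully establishing the equality case of the monotonicity identity at the boundary point $y \in T_{x_0}\pM$. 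One must verify that, for radial test fields $X(x) = \gamma(|x-y|/\lambda)(x-y)$ centered at $y$, the contributions $\int_{\partial\W}\scal{X}{N}\,d\muc$ and $\int_\W\scal{X}{\tilde H_C}\,d\|C\|$ in \eqref{eq:total_fvf_varifold_free_boundary} both vanish (the first because $x-y$ lies in $\partial\W$ whenever $x,y \in \partial\W$, the second because $\tilde H_C = 0$), so that the rigidity computation of Lemma \ref{lmm:tangent_cones}(5) carries over verbatim and yields the pointwise annihilation of $|P_{S^\perp}(x-y)|^2$.
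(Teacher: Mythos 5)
Your proof is correct and follows essentially the same strategy as the paper: maximality of the density at the origin via the cone property, monotonicity at boundary points (with all correction terms vanishing since $H_C=\tilde H_C=0$ and $\partial(T_{x_0}^+\M)$ is flat), the equality case of the monotonicity identity giving $P_{S^\perp}(x-y)=0$ for $C$-a.e.\ $(x,S)$, and hence coneness with vertex $y$. The only divergence is the last deduction: the paper passes from ``cone at $0$ and at $y$'' to $(\trasl_y)_\sharp C=C$ by comparing densities, showing $\T^k(\norm{C},z)=\T^k(\norm{C},\tfrac12(y+z))=\T^k(\norm{C},y+z)$ and invoking rectifiability, whereas you use the algebraic identity $\dil{y,r}\circ\dil{0,1/r}=\trasl_{-y/r}$ to obtain translation invariance directly from the two dilation invariances; both work, and your version avoids re-invoking rectifiability at that step. (Minor remark: you assert $\T^k(\norm{C},y)\le\T^k(\norm{C},0)$ for all $y\in T_{x_0}^+\M$, but the boundary monotonicity you cite only covers $y\in T_{x_0}\pM$, which is all the lemma needs.)
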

\begin{defi}
We say that $D_C$ is the \emph{invariant subspace} of the cone $C$.
\end{defi}

\begin{proof}[Proof of Lemma \ref{lmm:max_density_invariance}]
Without loss of generality we can assume that $x_0=0$. Let us call $\theta_0= \T^k(\nv,0)$. By Lemma \ref{lmm:tangent_cones} $C$ is rectifiable and it holds $\T^k(\nv,0) = \T^k(\norm{C},0)$. Since $C$ is a cone, for $y \in T_x \pM$ we have
\begin{equation}\label{eq:max_density_cone}
\theta_0
\stackrel{\eqref{eq:constant_density_cone}}{=}
\lim_{r \to +\infty}  \frac{\norm{C}\big(B_r\big)}{r^k}
\geq
\lim_{r \to +\infty} \frac{\norm{C}\big(B_{r-|y|}(y)\big)}{(r- |y|)^k}
\frac{(r- |y|)^k}{r^k}
=
\lim_{r \to +\infty} \frac{\norm{C}\big(B_r(y)\big)}{r^k}
\geq
\T^k(\norm{C},y).
\end{equation}
The last inequality is given by the monotonicity identity for $C$: in the last member of \eqref{eq:monotonicity_identity_fb}, the first two integrals disapper (since $H+\tilde{H}=0$ and $\scal{N(x)}{x}=0$) and the last term is non-negative.
\eqref{eq:max_density_cone} shows that
\begin{equation}\label{eq:cone_density_max_origin}
\T^k(\norm{C},0)
\geq
\T^k(\norm{C},y)
\qquad
\forall y \in T_0 \pM.
\end{equation}
If $y \in D_C$, then \eqref{eq:max_density_cone} yields
\begin{equation*}
\frac{\norm{C}\big(B_r(y)\big)}{r^k}
=
\theta_0
\qquad
\forall r >0.
\end{equation*}
By the same arguments of Lemma \ref{lmm:tangent_cones}, it follows that $C$ is a rectifiable cone also with respect to $y$.
By rectifiability of $C$, in order to show that $(\trasl_{y})_\sharp C=C$, it is enough to prove that
\begin{equation}\label{eq:points_D_C_invariance_density}
\T^k(C,z)=\T^k(C,y+z)
\qquad
\forall z \in T_0^+ \M.
\end{equation}
To this aim, let $z \in T_0^+\M$ be an arbitrary point.
Since $C$ is a cone with respect to $y$, we have that
\begin{equation*}
\T^k(C,z)
=
\T^k\Big(C, y + \frac{1}{2}(z-y)\Big)
=
\T^k\Big(C, \frac{1}{2}(y+z)\Big).
\end{equation*}
On the other hand, since $C$ is a cone we have
\begin{equation*}
\T^k(\norm{C},y+z) = \T^k\Big(C, \frac{1}{2}(y+z)\Big).
\end{equation*}
This shows \eqref{eq:points_D_C_invariance_density} and hence $(\trasl_{y})_\sharp C=C$.
The translation invariance of $\muc$ is a trivial consequence of \eqref{eq:total_fvf_varifold_free_boundary} applied to $C$ and of $(\trasl_{y})_\sharp C=C$.

It remains to show that $D_C$ is a linear subspace of $\R^{n}$. Since $C$ is a cone, if $y \in D_C$, then $\lambda y \in D_C$ for each $\lambda>0$. Since $C$ is a cone also with respect to $y$, then $\lambda y \in D_C$ also if $\lambda<0$. If $y,z \in D_C$, it follows from the previous discussion that also $y+z \in D_C$ and this proves that $D_C$ is a linear subspace.
\end{proof}

Before going on, we first recall the definition of \emph{approximate continuity}:
\begin{defi}[Approximate continuity]
If $\mu$ is a positive Radon measure and $f \colon \R^n \to \R$ is a Borel function, we say that $f$ is approximate continuous at $x \in \R^n$ with respect to $\mu$ if
\begin{equation*}
\lim_{r \to 0}
\frac{\mu
\big( \left\{
z \in B_r(x) \mid
\left|
f(z)
-
f(x)
\right|
> \e
\right\} \big)}
{\mu ( B_r(x) )}
=
0
\qquad
\forall \e>0.
\end{equation*}
\end{defi}
\begin{rem}\label{rem:measurable_approx_continuous}
It is well-known that, if $\mu$ is a Radon measure, then every $\mu$-measurable function is approximate continuous at $\mu$-a.e.\ point (see e.g. \cite[Theorem 1.37]{evans2015measure} where the proof is done for the Lebesgue measure, but the same arguments can be applied to any Radon measure).
\end{rem} 
The following lemma states that there exists a set $F$ of full $\muv$-measure with respect to $\reg(V)$ such that for every $x \in F$ the invariant subspace $D_C$ of any cone $C \in \tang(V,x)$ coincides with $\supp \muc$.

\begin{lemma}\label{lmm:approx_continuity_contant_density}
Let $V$ satisfy Assumption \ref{ass:varifold_rectifiability}.
Then there exists a set $F \subset \reg(V) \cap \pM$ that satisfies
\begin{equation*}
\muv(\reg(V) \- F) = 0
\end{equation*}
and has the following property: for every $x_0 \in F$ and for every
\mbox{$C \in \tang(V,x_0)$} we have either $\muc=0$ or $\muc \neq 0$ and
\begin{equation*}
\T^k(\norm{C},y)
=
\T^k(\norm{C},0)
\qquad
\forall y \in \supp \muc.
\end{equation*}
In particular, either $\muc=0$ or $\supp \muc = D_C$.
\end{lemma}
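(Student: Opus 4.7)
The strategy is to combine approximate continuity of the $k$-density function $\theta := \T^k(\nv,\cdot)$ with respect to $\muv$ with a uniform form of the monotonicity inequality \eqref{eq:monotonicity_phi}, so as to show that at $\muv$-a.e.\ $x_0 \in \reg(V) \cap \pM$, for every $C\in\tang(V,x_0)$ with $\muc\neq 0$ and every $y_0\in\supp\muc$, one has $\T^k(\|C\|,y_0)=\theta(x_0)$; by Lemma \ref{lmm:max_density_invariance} this is exactly $y_0\in D_C$.

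By Theorem \ref{thm:singular_set_V_lower_dimensional}, $\theta$ is Borel (upper semi-continuous) on $\reg(V)\cap\pM$; extending $\theta\equiv 0$ outside, Remark \ref{rem:measurable_approx_continuous} makes $\theta$ $\muv$-approximately continuous at $\muv$-a.e.\ point. Writing
\[
\reg(V) = \bigcup_{s \in \mathbb{Q}\cap(k-p,k]} \bigcap_{\e>0}\bigcup_{n\in\N} B_{s,n,\e},\qquad B_{s,n,\e}:=\Bigl\{x\in\M \,\Big|\, \sup_{r\leq 1/n} r^{-s}\int_{B_r(x)}|H+\tilde H|^p \,d\nv \leq \e \Bigr\},
\]
we define $F$ to be the set of $x_0\in\reg(V)\cap\pM$ at which $\theta$ is $\muv$-approximately continuous and such that, for every $\e>0$, $x_0$ is a $\muv$-density point of some $B_{s,n,\e}$ to which it belongs. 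A countable union of Lebesgue-differentiation null sets gives $\muv(\reg(V)\setminus F)=0$.

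Fix $x_0\in F$, $C\in\tang(V,x_0)$ with $\muc\neq 0$, and $y_0\in\supp\muc$; the bound $\T^k(\|C\|,y_0)\leq \T^k(\|C\|,0)=\theta(x_0)$ is \eqref{eq:cone_density_max_origin}. Fix $r_j\downarrow 0$ realizing $V_j\wto C$ and $\muj\wto \muc$ (Lemma \ref{lmm:tangent_cones}), choose $\e_0>0$ and $s_0,n_0$ with $x_0\in B_{s_0,n_0,\e_0}$ a $\muv$-density point. Rescaling both conditions at $x_0$ by $r_j$, for every $R>|y_0|$ and $\eta>0$
\[
\muj\bigl(\{y\in B_R:\ x_0+r_jy\notin B_{s_0,n_0,\e_0}\ \text{or}\ |\theta(x_0+r_jy)-\theta(x_0)|>\eta\}\bigr) \xrightarrow[j\to\infty]{} 0.
\]
Since $\muj(B_\delta(y_0))\to\muc(B_\delta(y_0))>0$ for generic small $\delta$, a diagonal extraction yields $y_j\to y_0$ with $w_j:=x_0+r_jy_j\in B_{s_0,n_0,\e_0}\cap\supp\muv$ and $\theta(w_j)\to\theta(x_0)$.

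The translated scalings $V_j^w:=(\trasl_{-y_j})_\sharp V_j$ converge weakly to $(\trasl_{-y_0})_\sharp C$, so for almost every $\rho>0$
\[
\frac{\nv(B_{r_j\rho}(w_j))}{r_j^k} = \|V_j^w\|(B_\rho) \xrightarrow[j\to\infty]{} \|C\|(B_\rho(y_0)).
\]
On the other hand, applying \eqref{eq:monotonicity_phi} at $w_j$ yields $\nv(B_{r_j\rho}(w_j))/(r_j\rho)^k \geq \omega_k\theta(w_j) - \varphi_{w_j}(r_j\rho)$; inspection of the proof of Lemma \ref{lmm:monotonicity_inequality_s} shows that, uniformly for $w\in B_{s_0,n_0,\e_0}$, one has $\varphi_w(t)\leq \psi_{\e_0}(t)$ with $\psi_{\e_0}(t)\to 0$ as $t\to 0$, hence $\varphi_{w_j}(r_j\rho)\to 0$. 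Taking $j\to\infty$ then $\rho\to 0$ gives $\T^k(\|C\|,y_0)\geq\theta(x_0)$, and combining with the opposite inequality completes the identity. The final ``in particular'' clause then follows from Lemma \ref{lmm:max_density_invariance}: $\muc\neq 0$ and its $D_C$-invariance force $D_C\subseteq\supp\muc$, while the equality $\T^k(\|C\|,y)=\theta(x_0)$ for every $y\in\supp\muc$ is exactly $\supp\muc\subseteq D_C$. The main technical obstacle is the uniform control of $\varphi_{w_j}$ along the approximating sequence, which is the reason for building the $B_{s,n,\e}$-density condition into the definition of $F$.
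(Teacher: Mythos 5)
Your proof is correct and rests on the same three pillars as the paper's: approximate continuity of $\tkv{\cdot}$ with respect to $\muv$, a uniformization of the monotonicity error $\varphi_x$ on a set of full $\muv$-measure, and the convergence $\muj\wto\muc$ to transfer density information between $\supp\muc$ and nearby points of $\pM$. Where you genuinely diverge is in the uniformization step and in the logical structure. The paper applies Egoroff's theorem to $\varphi_x(\rho)\to 0$ to extract sets $F_h$ of almost full $\muv$-measure on which the convergence is uniform, and then argues by contradiction: a point $y\in\supp\muc$ of strictly smaller density would force a whole ball $B_r(y)$, of $\muj$-measure bounded below, to consist of points where $\T^k(\norm{V_j},\cdot)$ deviates from $\T^k(\norm{V_j},0)$, contradicting approximate continuity at $x_0$. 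You instead decompose $\reg(V)$ into the explicit sublevel sets $B_{s,n,\e}$ of the curvature integral, on which the monotonicity error admits a quantitative uniform bound $\psi_{\e_0}$, and argue directly by extracting good points $w_j=x_0+r_jy_j$ whose rescalings converge to $y_0$ and passing the lower density bound to the limit. Your route buys an explicit modulus and avoids Egoroff; the paper's is softer but shorter.

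Two details to make explicit when writing this up, neither of which is a genuine gap. First, the uniform bound $\varphi_w(t)\le\psi_{\e_0}(t)$ obtained by integrating the differential inequality also needs a uniform bound on $\nv\big(B_t(w_j)\big)/t^k$ at the top scale $t=r_j\rho$; this follows from \eqref{eq:monotonicity_phi} applied at $x_0$ itself, since $|w_j-x_0|\le r_j(|y_0|+1)$. Second, membership in $B_{s_0,n_0,\e_0}$ alone does not place $w_j$ in $\reg(V)$, so before invoking \eqref{eq:monotonicity_phi} at $w_j$ you must observe that $|\theta(w_j)-\theta(x_0)|\le\eta<\theta(x_0)$ together with the convention $\theta\equiv 0$ off $\reg(V)$ forces $w_j\in\reg(V)$ (here $\theta(x_0)>0$ because $\muc\neq 0$ implies $C\neq 0$ and hence $\T^k(\norm{C},0)=\T^k(\nv,x_0)>0$). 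Also note that with the paper's conventions the density carries no $\omega_k$ normalization, so the factor $\omega_k$ in your lower bound should be dropped.
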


\begin{proof}
The idea of the proof is the following: we first define the ``good" set $F$ of full $\muv$-measure with respect to $\reg(V)$ where $\tkv{\cdot}$ exists and is approximate continuous with respect to $\muv$.
Next, we fix $x_0 \in F$, $r_j \to 0$ and, using the notations for the scalings, 
\begin{equation*}
V_j \wto C \in \tang(V,x_0).
\end{equation*}
We fix $y \in \supp \muc$ and we assume by contradiction that the statement is false. 
We find a tiny ball $B_r(y)$ where $\T^k(\norm{V_j},\cdot)$ is close to $\Tn(\norm{C},y)$ for $j$ sufficiently large. Since $\Tn(\norm{C},y)$ is close to $\norm{C}(B_\rho(y))/\rho^k$ for small $\rho$, this is achieved by weak convergence $V_j \wtoi C$ and using the properties of $\varphi_{x_0}$ stated in Theorem \ref{thm:singular_set_V_lower_dimensional}.
Since $\muj(B_r(y)) > \beta >0$ for large $j$, this contradicts the approximate continuity of $\Tn(\nv, \cdot)$ at $x_0$ with respect to $\muv$.
\begin{steps}[wide,%
labelindent=5pt]
\item We are going first to define the set $F$ of full $\muv$-measure with respect to $\reg(V)$ and next we will prove that the conclusion of the theorem holds for every $x \in F$.

We call $A$ the set of points $x \in \pM$ that satisfy all the following conditions:
\begin{enumerate}
\item $x \in \reg(V) \cap \supp \muv$ and is of density $1$ for $\reg(V)$ with respect to $\muv$, that is
\begin{equation}\label{eq:density_1_regV}
\lim_{\rho \to 0} \frac{\muv \big( \reg(V) \cap B_\rho(x)\big)}{\muv\big(B_\rho(x) \big)}=1;
\end{equation}
\item $x$ is a point of approximate continuity for $\tkv{\cdot}$  with respect to $\muv$ (which is a well-defined Borel function in $\reg(V)$ and can be extended, for instance, to be $0$ outside $\reg(V)$; the chosen extension does not influence the approximate continuity, because by \eqref{eq:density_1_regV} $x$ has density $1$ in $\reg(V)$, where $\tkv{\cdot}$ is well-defined).
\end{enumerate}
By Lebesgue differentiation theorem and by Remark \ref{rem:measurable_approx_continuous}, we have
\begin{equation}\label{eq:full_mus_measure_A}
\muv (\reg(V) \- A)
= 0.
\end{equation}
In addition, by Theorem \ref{thm:singular_set_V_lower_dimensional}, for every $x \in A$ the map $\rho \mapsto \varphi_x ( \rho )$ is monotone increasing and converge pointwise to $0$ as $\rho \downarrow 0$, that is
\begin{equation*}
\lim_{\rho \to 0} \varphi_x ( \rho )
= 0
\qquad
\forall x \in A.
\end{equation*}
Thus, by Egoroff's Theorem, for every $h \in \N$ there exists a set $F_h \subset A$ such that
\begin{equation}\label{eq:uniform_convergence_fh}
\muv(A \- F_h) \leq 1/h,
\qquad
\varphi_x (\rho)
\utoi_{\rho \to 0}
0
\mbox{ on }
F_h.
\end{equation}
Up to removing sets of $\muv$-measure $0$, we can assume that every $x \in F_h$ is a point of density $1$ with respect to $\muv$, that is
\begin{equation}\label{eq:Fh_density_1}
\lim_{\rho \to 0} \frac{\muv\big( F_h \cap B_\rho(x)\big)}{\muv\big(B_\rho(x)\big)}
= 1
\qquad
\forall x \in F_h.
\end{equation}
We now define
\begin{equation*}
F = A \cap
\Big(
\bigcup_{h \in \N} F_h
\Big).
\end{equation*}
By \eqref{eq:full_mus_measure_A} and \eqref{eq:uniform_convergence_fh} it follows
\begin{equation*}
\mus( \reg(V) \- F) = 0.
\end{equation*}

Let us fix $x_0 \in F$ and let us consider $h \in \N$ such that $x_0 \in F_h$. Without loss of generality we can assume $x_0=0$.

Now let us fix $r_j \downarrow 0$.
Using the notations for the scalings, since $F \subset \reg(V)$, by Lemma \ref{lmm:tangent_cones} there exists a subsequence, not relabeled, such that $V_j \wtoi C$ with $C \in \tang(V,0)$ and $C$ is a rectifiable cone with $\tk(\norm{C},y)\geq 1$ for $\norm{C}$-a.e.\ $y \in \R^n$.

We now need a technical remark that is useful in the rest of the proof:
recalling that $H_j(y)=r_jH(r_jy)$, by a simple change of variables one obtains
\begin{equation*}
r_j x \in \reg(V)
\quad
\Leftrightarrow
\quad
x \in \reg(V_j).
\end{equation*}
More precisely, if we call $\varphi_x^j$ the function for $V_j$ defined in Theorem \ref{thm:singular_set_V_lower_dimensional}, we get
\begin{equation}\label{eq:scaling_varphi}
\varphi_x^j(\rho)
=
\varphi_{r_j x} (r_j \rho).
\end{equation}
\item
We can now begin with the proof. If $\muc=0$ there is nothing to prove. Thus we can assume $\muc \neq 0$ and $\emptyset \neq \supp \muc \subset \partial \W$. Since $C$ is a cone,
By \eqref{eq:cone_density_max_origin} we have
\begin{equation*}
\Tn(\norm{C},y)  \leq \Tn(\norm{C},0)
\qquad
\forall y \in \partial \W.
\end{equation*}
By contradiction, let us assume that there exists $y \in \supp \muc$ and $\e >0$ such that
\begin{equation}\label{eq:density_cone_contradiction}
\Tn(\norm{C},y)
< 
\Tn(\norm{C},0) - \e.
\end{equation}
Since $\muc$ is scaling invariant by Lemma \ref{lmm:tangent_cones}, without loss of generality we can assume that $y\in B_{1/2}$.
By the uniform convergence \eqref{eq:uniform_convergence_fh} and by definition of density, there exists $\rho \in
(0,1/2)
$
such that
\begin{equation}\label{eq:density_cone_close_limit}
\frac{\norm{C}\big(B_\rho(y)\big)}{\rho^k}
+ \varphi_z(\rho)
\leq
\Tn(\norm{C},y)
+ \frac{\e}{8}
\qquad
\forall z \in F_h.
\end{equation}
Since $V_j \wtoi C$, without loss of generality we can choose $\rho$ so that there exists $J \in \N$ and a small $0< r < \rho$ for which
\begin{equation}\label{eq:closeness_density_cones_scalings}
\left|
\frac{\norm{C}\big(B_\rho(y)\big)}{\rho^k}
-
\frac{\norm{V_{j}}\big(B_{\rho}(y)\big)}{(\rho-r)^k}
\right|
< \frac{\e}{8}
\qquad
\forall j > J.
\end{equation}
Let us choose $j> J$; for every $z \in B_r(y)$ such that $r_j z \in F_h$ we have
\begin{equation*}
\begin{split}
\Tn(\norm{V_{j}},z)
& \leq
\frac{\norm{V_{j}}\big(B_{\rho-r}(z)\big)}{(\rho-r)^k}
+ \varphi_z^j(\rho - r)
\\
& \leq
\frac{\norm{V_{j}}\big(B_{\rho}(y)\big)}{(\rho-r)^k}
+ \varphi_{r_j z}(r_j \rho)
\\
& \stackrel{\eqref{eq:closeness_density_cones_scalings}}{\leq}
\frac{\norm{C}\big(B_\rho(y)\big)}{\rho^k}
+ \varphi_{r_j z}(r_j \rho)
+ \frac{\e}{8}
\\
& \stackrel{\eqref{eq:density_cone_close_limit}}{\leq}
\Tn(\norm{C},y) + \frac{\e}{4}
\\
& \stackrel{\eqref{eq:density_cone_contradiction}}{\leq}
\Tn(\norm{C},0) - \frac{3 \e}{4}
\\
& =
\Tn(\norm{V_j},0) - \frac{3 \e}{4}.
\end{split}
\end{equation*}
where we used the fact that every $\varphi_x$ is increasing and \eqref{eq:scaling_varphi}.
This shows that, for $j > J$,
\begin{equation*}
B_r(y) \cap \frac{1}{r_j}F_h
\subseteq
\left\{
z \in B_1 \mid
\left|
\Tn(\norm{V_j},z)
-
\Tn(\norm{V_j},0)
\right|
> \frac{\e}{2}
\right\}.
\end{equation*}
\item
We now want to estimate from below the measure of this set to get a contradiction with the approximate continuity of $\tkv{\cdot}$ in $0$.

By approximate continuity of the $\tkv{\cdot}$ in $0$ with respect to $\muv$ we have
\begin{equation}\label{eq:first_estimate_approx_cont}
\begin{split}
0
& =
\limsup_{j \to \infty}
\frac{\muv
\left( \left\{
z \in B_{r_j} \mid
\left|
\T^k(\norm{V},z)
-
\T^k(\norm{V},0)
\right|
> \frac{\e}{2}
\right\} \right)}
{\muv ( B_{r_j} )}
\\
& \geq
\limsup_{j \to \infty} \frac{\muv \big( B_{rr_j }(r_j y) \cap F_h \big)}{\muv(B_{r_j})}
\\
& =
\limsup_{j \to \infty}
\frac{\muv \big( B_{rr_j }(r_j y)  \big)}{\muv(B_{r_j})}
\\
& =
\limsup_{j \to \infty}
\frac{\muj \big( B_{r }(y)  \big)}{\muj(B_{1})},
\end{split}
\end{equation}
where the second identity is consequence of \eqref{eq:Fh_density_1} and the last one follows by \eqref{eq:scaled_mu_is_muj}.
We want to estimate from below the last term to get a contradiction. To do so, let us notice that, by $\muj \wto \muc$ and $y \in \supp \muc$, there exist two constants $c,\beta >0$ such that, for $j$ sufficiently large,
\begin{equation*}
\muj \big( B_{1} \big)
\leq c,
\qquad
\muj \big( B_r(y)\big)
\geq \beta,
\end{equation*}
which contradicts \eqref{eq:first_estimate_approx_cont}.

This also prove the inclusion $\supp \muc \subset D_C$. To prove the other inclusion, let us notice that the scaling invariance of $\muc$ and $\supp \muc \neq \emptyset$ imply $0 \in \supp \muc$. Since $\muc$ is invariant by translations along $D_C$ by Lemma \ref{lmm:max_density_invariance}, we have the opposite inclusion and $\supp \muc = D_C$.
\end{steps}
\end{proof}

We next prove that, for every $x \in F$ (where $F$ is the set defined in the previous Lemma) such that $\tlu(\muv,x)>0$ and for every $C \in \tang(V,x)$, $\muc$ is the surface measure of a $(k-1)$-plane, which coincides with $D_C$.

\begin{lemma}\label{lmm:positive_density_k-1-dimensional}
Let $V$ satisfy Assumption \ref{ass:varifold_rectifiability}
and let $F$ be the set defined in Lemma \ref{lmm:approx_continuity_contant_density}. For every $x_0 \in F$ such that $\tlu(\muv,x)>0$ and for every $C \in \tang(V,x_0)$, the invariant subspace $D_C$ of $C$ is $(k-1)$-dimensional; moreover there exists $\alpha_0 > 0$ such that
\begin{equation*}
\muc= \alpha_0 \haus{k-1} \llcorner D_C
.
\end{equation*}
\end{lemma}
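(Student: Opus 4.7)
The plan is to combine the structural properties of the tangent cone $C$ obtained in the preceding lemmas with the scaling homogeneity of $\muc$, first to identify $\muc$ as a multiple of Hausdorff measure on $D_C$ and then to pin down the dimension of $D_C$ by matching scaling exponents.

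First I would verify that the hypothesis $\tlu(\muv,x_0)>0$ propagates to give $\muc\ne 0$. Let $r_j\downarrow 0$ be a sequence realizing $C\in\tang(V,x_0)$; by Lemma~\ref{lmm:tangent_cones}, the rescaled boundary measures $\muj=\frac{1}{r_j^{k-1}}(\dil{x_0,r_j})_{\#}\muv$ converge weakly to $\muc$. Testing against a cut-off $\phi\in C_c(\R^n)$ with $\ind_{B_r}\le\phi\le\ind_{B_{2r}}$ and using $\muj(B_r)=r^{k-1}\,\muv(B_{rr_j}(x_0))/(rr_j)^{k-1}$, one finds
\[
\muc(\clos{B_{2r}}) \;\ge\; \lim_{j\to\infty}\int\phi\,\dif\muj \;\ge\; \liminf_{j\to\infty}\muj(B_r) \;\ge\; r^{k-1}\tlu(\muv,x_0) \;>\; 0,
\]
so $\muc\ne 0$. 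Since $x_0\in F$, Lemma~\ref{lmm:approx_continuity_contant_density} then yields $\supp\muc=D_C$, and in particular $0\in D_C$.

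Next I would identify $\muc$ as a constant multiple of Hausdorff measure on $D_C$. By Lemma~\ref{lmm:max_density_invariance}, $\muc$ is invariant under translations by every vector of $D_C$. Setting $d=\dim D_C$ and identifying $D_C$ with $\R^d$ via a linear isometry, the push-forward of $\muc$ becomes a positive Radon measure on $\R^d$ invariant under the full translation group; by the uniqueness of translation-invariant Radon measures on $\R^d$, this push-forward equals $\alpha_0\,\haus{d}$ for some $\alpha_0\ge 0$. The equality $\supp\muc=D_C$ forces $\alpha_0>0$, and pulling back gives $\muc=\alpha_0\,\haus{d}\llcorner D_C$.

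Finally I would determine $d$ via the scaling identity $\frac{1}{r^{k-1}}(\dil{0,r})_{\#}\muc=\muc$ from Lemma~\ref{lmm:tangent_cones}. For every Borel $B\subset D_C$ one has $(\dil{0,r})_{\#}\muc(B)=\muc(rB)=\alpha_0 r^d\haus{d}(B)$, so the scaling identity reads $\alpha_0 r^{d-k+1}\haus{d}(B)=\alpha_0\haus{d}(B)$ for every $r>0$. Choosing any $B\subset D_C$ with $0<\haus{d}(B)<\infty$ (e.g.\ a small ball of $D_C$ centred at the origin, which lies in $\supp\muc$) forces $d=k-1$, completing the proof. I do not expect serious obstacles here: the substantive analytic work has already been carried out in Lemmas~\ref{lmm:tangent_cones}--\ref{lmm:approx_continuity_contant_density}, and the present statement reduces to a Haar-measure-type uniqueness argument paired with the matching of scaling exponents, the only minor delicacy being the strict positivity $\alpha_0>0$, which is immediate from $\supp\muc=D_C$.
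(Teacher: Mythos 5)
Your proof is correct, and it finishes by a genuinely different (and somewhat cleaner) route than the paper's. Both arguments begin identically: $\tlu(\muv,x_0)>0$ together with $\muj\wto\muc$ gives $\muc\neq 0$, and Lemma \ref{lmm:approx_continuity_contant_density} then yields $\supp\muc=D_C$. From there the paper combines the translation invariance of Lemma \ref{lmm:max_density_invariance} with the scaling invariance of Lemma \ref{lmm:tangent_cones} to obtain $\muc\big(Q_{D_C}(y,r)\big)=\beta_0 r^{k-1}$ for all cubes in $D_C$, rules out $\dim D_C<k-1$ and $\dim D_C\geq k$ by a dyadic covering count (the measure of the unit cube of $D_C$ would be $0$ or $+\infty$ respectively), and only afterwards identifies $\muc=\alpha_0\haus{k-1}\llcorner D_C$ via the Radon--Nikodym differentiation theorem. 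You instead use translation invariance alone, together with uniqueness of translation-invariant locally finite measures on $\R^d$, to get $\muc=\alpha_0\haus{d}\llcorner D_C$ first, and then read off $d=k-1$ by matching the scaling exponent $r^{d}$ against the $r^{k-1}$ homogeneity from Lemma \ref{lmm:tangent_cones}. Both are sound; your version decouples the two invariances into two independent steps and avoids the explicit covering argument, at the cost of invoking Haar-measure uniqueness on $\R^d$ (which is itself usually proved by the same kind of cube subdivision the paper carries out by hand). The points that need care are already handled in your write-up: $\muc$ is concentrated on the closed set $D_C=\supp\muc$, so it genuinely descends to a locally finite translation-invariant measure on $D_C\cong\R^d$; and the exponent-matching step requires some Borel $B$ with $0<\muc(B)<\infty$, which $\muc\neq0$ plus local finiteness of the Radon measure $\muc$ provides.
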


\begin{proof}
Let $x_0 \in F$ be a fixed point and let us assume $C \in \tang(V,x_0)$.
Without loss of generality we can assume that $x_0=0$ and that $T_0 \pM$ is the subspace $\{x_n = 0\}$. Since $C \in \tang(V,0)$ there exists $r_j \downarrow 0$ such that, using the notations for the scalings,
$V_j \wtoi C$. Lemma \ref{lmm:tangent_cones} asserts $C$ is a rectifiable cone and that that $\muj \wto \muc$, where $\muc$ is the measure relative to $C$ given by Corollary \ref{cor:bv_free_boundary_varifolds}.

We first recall that the condition
\begin{equation}\label{eq:positive_k-1_density}
\tlu(\muv,0)
=
\liminf_{r \to 0}\frac{\muv\big( B_r(x_0) \big)}{r^{k-1}}
>0,
\end{equation}
together with $\muj \wto \muc$, implies that $\muc \neq 0$. Thus, Lemma \ref{lmm:approx_continuity_contant_density} provides $\supp \muc = D_C$, where $D_C$ is the invariant subspace $D_C$ of $C$, given by Lemma \ref{lmm:max_density_invariance}.
$D_C$ is a linear subspace of $\R^n$ and throughout this proof we call $m= \dim D_C$ its dimension. By definition of $D_C$, we clearly have $D_C \subset \{x_n = 0\}$. Thus $m \leq n-1$. After a suitable change of coordinates, we can assume that $D_C= \{x_{m+1}= \dots = x_{n}=0\}$.

For any $y \in D_C$ and any $r>0$, we denote by $Q_{D_C}(y,r)$ the closed cube included in $D_C$ with center $y$, side of length $r$ and faces parallel to the coordinate vectors $e_1, \dots, e_{m}$.
If we set
\begin{equation}\label{eq:defi_alpha_0}
\alpha_0
=
\liminf_{j \to \infty}\frac{\muv\big(B_{r_j}\big)}{\omega_{k-1} r_j^{k-1}}.
\end{equation}
we have $\alpha_0>0$, by \eqref{eq:positive_k-1_density}. 
Since $\muc$ is invariant by scalings (by Lemma \ref{lmm:tangent_cones}) and by translations in $D_C$ (by Lemma \ref{lmm:max_density_invariance}), there exists a fixed $\beta_0>0$ such that
\begin{equation}\label{eq:invariance_measure_cubes}
\muc \big( Q_{D_C}(y,r) \big) = \beta_0 r^{k-1}
\qquad
\forall y \in D_C
\quad
\forall r >0.
\end{equation}
We are going to show that \eqref{eq:invariance_measure_cubes} implies $m = k-1$. We argue by contradiction and by cases:
\begin{itemize}
\item Let us assume, by contradiction, that $ m < k-1$. For each $l \in \N$, there exists a covering $\{Q_i^l\}_{i=1}^{2^{lm}}$ of $Q_{D_C}(0,1)$ such that each $Q_i^l$ is a cube included in $D_C$ and of side length $2^{-l}$.
Therefore
\begin{equation*}
\muc\big( Q_{D_C}(0,1) \big)
\leq
\sum_{i=1}^{2^{lm}} \muc \big( Q_i^l \big)
=
2^{lm} \beta_0 2^{-l(k-1)}
\leq
\beta_0 2^{-l}
\xrightarrow[l \to \infty]{}
0.
\end{equation*}
Thus $\muc\big( Q_{D_C}(0,1) \big)=0$. By translation invariance of $\muc$, it follows that $\muc = 0$, which is a contradiction.
\item Let us assume now that $m \geq k$. We first observe that by approximation, \eqref{eq:invariance_measure_cubes} holds also for cubes that are open in $D_C$. Hence, taking for every $l \in \N$ a covering $\{Q_i^l\}_{i=1}^{2^{lm}}$ of $Q_{D_C}(0,1)$ of cubes included in $D_C $ with disjoint interiors and of side length $2^{-l}$, we have
\begin{equation*}
\muc \big( Q_{D_C}(0,1) \big)
\geq
\sum_{i=1}^{2^{lm}} \muc \Big(  \interior{(Q_i^l)} \Big)
=
2^{lm} \beta_0 2^{-l(k-1)}
\geq
\beta_0 2^{l}
\xrightarrow[l \to \infty]{}
+\infty,
\end{equation*}
(where $\interior{(Q_i^l)}$ is intended in the topology of $D_C$) 
which is a contradiction.
\end{itemize}
This shows that $\dim D_C = k-1$. Since $\muc$ is invariant by scalings and translations in $D_C$, we have
\begin{equation*}
\frac{\muc \big( B(r(y) \big)}{\haus{k-1}\big( D_C \cap B_r(y) \big)}
=
\alpha_0
\qquad
\forall y \in D_C,
\,
\forall r>0.
\end{equation*}
By Radon-Nikodym Theorem \cite[Theorem 4.7]{simon:lectures}, we obtain that
$\muc = \alpha_0 \haus{k-1} \llcorner D_C$.
%
\end{proof}

\subsection{Proof of Theorem \ref{thm:rectifiability_free_boundary}}\label{subsec:final}

We can now prove Theorem \ref{thm:rectifiability_free_boundary}.

\begin{proof}[Proof of Theorem \ref{thm:rectifiability_free_boundary}]
By definition of $\mus$ we have
\begin{equation}\label{eq:densities_positive_finite}
0 <
\tlu(\mus,x)
\leq
\tuu(\mus,x) < + \infty
\qquad
\mbox{for } \mus \mbox{-a.e. } x \in \pM,
\end{equation}
thus $\mus$ satisfies the first condition of the Marstrand-Mattila Rectifiability Criterion (Theorem \ref{thm:M-M_criterion}).

To check the second condition of the criterion, let us notice that
Corollary
\ref{cor:tangent_mus_are_muc}
yield
\begin{equation}\label{eq:tangent_mus_are_muc_2}
\tang^{k-1}(\mus,x)
=
\{
\muc \mid C \in \tang(V,x)
\}
\qquad
\mbox{for } \mus \mbox{-a.e. } x \in \pM.
\end{equation}
Since $\tlu(\muv,x)>0$ for every $x \in E$ where $E$ is defined in \eqref{eq:definition_mus}, Lemma \ref{lmm:approx_continuity_contant_density} and Lemma \ref{lmm:positive_density_k-1-dimensional} yields
\begin{equation}\label{eq:muc_are_k-1_dimensional_planes}
\{
\muc \mid C \in \tang(V,x)
\}
\subset
\{
\alpha \haus{k-1} \llcorner S
\mid
S \, (k-1)\mbox{-dimensional plane}, \alpha>0
\}
\qquad
\forall x \in F \cap E,
\end{equation}
where $F$ is the set defined in Lemma \ref{lmm:approx_continuity_contant_density}. Moreover
\begin{equation}\label{eq:F_cap_E_is_mus-full}
\begin{split}
\mus \big( \pM \- (F \cap E) \big)
\leq &
\,
\mus\big( \pM \- \reg(V) \big)
+
\mus \big( \reg(V) \- F\big)
+
\mus \big(\pM \- E \big)
\\
= &
\, 0.
\end{split}
\end{equation}
Every set in the right-hand side is $\mus$-negligible because: $\mus \big( \pM \- \reg(V) \big)=0$ by Lemma \ref{lmm:mus_abscont_hausk-1} and $p>1$; $\mus \big( \reg(V) \- F\big)=0$ by Lemma \ref{lmm:approx_continuity_contant_density} and $\mus \ll \muv$; $\mus\big(\pM \- E \big)=0$ by definition of $\mus$.

Summarizing \eqref{eq:tangent_mus_are_muc_2}, \eqref{eq:muc_are_k-1_dimensional_planes} and \eqref{eq:F_cap_E_is_mus-full}, we obtain
\begin{equation*}
\tang^{k-1}(\mus,x)
\subset
\{
\alpha \haus{k-1} \llcorner S
\mid
\alpha>0, \,
S \text{ is a }(k-1)\mbox{-dimensional plane}
\}
\qquad
\mbox{for } \mus \mbox{-a.e. } x \in \pM.
\end{equation*}
Since this is the second condition for the Marstrand-Mattila Rectifiability Criterion, we have that $\mus$ is \mbox{$(k-1)$-rectifiable}.
\end{proof}

\printbibliography

\end{document}